\DeclareFontFamily{OT1}{rsfs}{}
 \DeclareFontShape{OT1}{rsfs}{n}{it}{<->rsfs10}{}
 \DeclareMathAlphabet{\curly}{OT1}{rsfs}{n}{it}
\theoremstyle{plain}  
\newtheorem{theorem}{Theorem}[section]
\newtheorem*{theorem*}{Theorem}
\newtheorem{corollary}[theorem]{Corollary}
\newtheorem{lemma}[theorem]{Lemma}
\newtheorem{proposition}[theorem]{Proposition}
\theoremstyle{definition}
\newtheorem{definition}[theorem]{Definition}
\theoremstyle{remark}
\newtheorem{remark}[theorem]{Remark}
\newtheorem*{remark*}{Remark}
\newtheorem*{claim*}{Claim}
\renewcommand{\phi}{\varphi}
\newcommand{\andd}{\quad\text{and}\quad}
\newcommand{\suhthat}{\;:\;}
\newcommand{\geqp}{%
  \mathrel{\raisebox{-0.5ex}{$\scriptscriptstyle($}}%
  \geq
  \mathrel{\raisebox{-0.5ex}{$\scriptscriptstyle)$}}%
}
\newcommand{\N}{\mathbb{N}}
\newcommand{\Q}{\mathbb{Q}}
\newcommand{\R}{\mathbb{R}}
\newcommand{\Z}{\mathbb{Z}}
\newcommand{\C}{\mathbb{C}}
\newcommand{\E}{\mathbb{E}}
\newcommand{\Eb}{\E_{\bullet}}
\DeclareMathOperator{\Aut}{Aut}
\newcommand{\SL}{\mathrm{SL}}
\newcommand{\GL}{\mathrm{GL}}
\newcommand{\tg}{\theta_{\gamma}}
\DeclareMathOperator{\Int}{Int}
\newcommand{\lie}{\mathfrak}
\DeclareMathOperator{\Out}{Out}
\newcommand{\zk}{\lie z_{\lie k}}
\newcommand{\gl}{\lie{gl}(n,\C)}
\DeclareMathOperator{\stab}{Stab}
\newcommand{\Gm}{\mathbb{G}_m}
\newcommand{\tP}{\widetilde{P}}
\newcommand{\tL}{\widetilde{L}}
\newcommand{\cha}{\zeta}
\newcommand{\ttau}{\widetilde{\tau}}
\newcommand{\oo}{\mathcal{O}}
\newcommand{\iso}{\mathcal{I}so}
\newcommand{\hhom}{\mathcal{H}om}
\newcommand{\sym}{\mathcal{S}ym}
\DeclareMathOperator{\Sym}{Sym}
\DeclareMathOperator{\id}{id}
\newcommand{\eb}{E_{\bullet}}
\newcommand{\ebt}{E_{t\bullet}}
\newcommand{\alb}{\alpha_{\bullet}}
\DeclareMathOperator{\rk}{rk}
\newcommand{\Cabc}{\C^n_{a,b,c}}
\newcommand{\rhoabc}{\rho_{a,b,c}}
\newcommand{\Vg}[1]{V_{\Gamma,#1}}
\newcommand{\bundle}{Q}
\newcommand{\qq}{\mathfrak Q}
\newcommand{\oqq}{\overline{\qq}}
\newcommand{\f}{\mathfrak F}
\newcommand{\ii}{\mathfrak{I}}
\newcommand{\jj}{\mathfrak J}
\newcommand{\cc}{\mathfrak C}
\newcommand{\spec}{\mathcal{S}pec}
\DeclareMathOperator{\Spec}{Spec}
\newcommand{\fg}{f_{\gamma}}
\newcommand{\action}{\bullet}
\newcommand{\PP}{\mathbb{P}}
\DeclareMathOperator{\Hom}{Hom}
\newcommand{\pair}[2]{\langle#1,#2\rangle}
\newcommand{\trho}{\widetilde{\rho}}
\newcommand{\M}[1]{M_{#1}(\C)}
\newcommand{\MM}[1]{M_{#1}(\C)\times M_{#1}(\C)}
\newcommand{\GLL}{\GL(n,\C)\times \GL(n,\C)}
\newcommand{\GLX}{\GL(n,\C(X))}
\DeclareMathOperator{\Jac}{Jac}
\newcommand{\Ll}{\mathcal{L}}
\newcommand{\Pp}{\mathcal{P}}
\newcommand{\Gg}{\mathcal{G}{(m)}}
\newcommand{\GG}{\mathbb{G}{(m)}}
\newcommand{\GGss}{\GG^{ss}}
\DeclareMathOperator{\Gies}{Gies}
\newcommand{\Giesss}{\Gies^{ss}}
\newcommand{\ccss}{\cc_0^{ss}}
\newcommand{\mumaxg}{\mu_{\text{max}}^{\Gamma}}
\newcommand{\mumin}{\mu_{\text{min}}^{\Gamma}}
\newcommand{\tsigma}{\widetilde{\sigma}}
\newcommand{\tE}{\widetilde{E}}
\newcommand{\cxz}{(C\times X)\setminus (C\times Z)}
\newcommand{\tf}{\widetilde{f}}
\newcommand{\Mm}{\mathcal{M}}
\newcommand{\Ff}[1]{\mathcal F_{#1}}
\newcommand{\tFf}[1]{\widetilde{\mathcal F}_{#1}}
\DeclareMathOperator{\pt}{pt}
\newcommand{\tphi}{\widetilde{\phi}}
\newcommand{\teta}{\widetilde{\eta}}
\newcommand{\HG}{H^1_{\theta}(\Gamma,G)}
\newcommand{\tJac}{\widetilde{\Jac}}
\newcommand{\otheta}{\overline{\theta}}
\newcommand{\oc}{\overline{c}}
\DeclareMathOperator{\Gal}{Gal}
\newcommand{\olambda}{\overline{\lambda}}
\title{Moduli spaces of twisted equivariant \texorpdfstring{$G$}{G}-bundles over a curve}
\author{G. Barajas}
\date{\today}
\subjclass[2020]{Primary 14D20; Secondary 32L05}
\keywords{Equivariant principal bundle, moduli spaces}
\thanks{The author has received the support of the ERC Synergy Grant 810573/2019.}
\begin{document}

\begin{abstract}
Let $X$ be a compact Riemann surface, $\Gamma$ a finite group of automorphisms of $X$ and $G$ a connected reductive complex Lie group with center $Z$. If we equip this data with a homomorphism $\theta:\Gamma\to\Aut(G)$ and a 2-cocycle $c\in Z^2_{\theta}(\Gamma,Z)$, there is a notion of $(\theta,c)$-twisted $\Gamma$-equivariant $G$-bundle over $X$. The aim of this paper is to construct a coarse moduli space of isomorphism classes of polystable $(\theta,c)$-twisted equivariant $G$-bundles over $X$, according to the definition of polystability given by Garc\'ia-Prada--Gothen--Mundet i Riera. This generalizes the well-known construction of the moduli space of $G$-bundles given by Ramanathan. It also gives, in particular, a GIT construction of the moduli space of $\Gamma$-equivariant $G$-bundles, and the moduli space of $\hat G$-bundles for $\hat G$ non-connected by our joint work with Garc\'ia-Prada, Gothen and Mundet i Riera --- complementing the construction of a projective good moduli space for the moduli stack of $\hat G$-bundles given by Olsson--Reppen--Tajakka.


\end{abstract}

\maketitle

\section{Introduction}

Let $X$ be a compact Riemann surface, $\Gamma$ a finite group of automorphisms of $X$ and $G$ a connected reductive complex Lie group with center $Z$. Fix a homomorphism $\theta:\Gamma\to\Aut(G)$. Recall that there is a group of 2-cocycles $Z^2_{\theta}(\Gamma,Z)$, appearing for example in the context of Galois cohomology \cite{serre-galois}, consisting of maps $\Gamma\times\Gamma\to Z$ satisfying a precise condition (\ref{eq-2-cocycle}). Pick a 2-cocycle $c\in Z^2_{\theta}(\Gamma,Z)$. 

Given a $G$-bundle $\bundle\to X$, a $(\theta,c)$-twisted $\Gamma$-equivariant structure on $\bundle$ is a lift of the $\Gamma$-action on $X$ to a map from $\Gamma$ to the group of holomorphic automorphisms of the total space of $\bundle$. This action must further twist the bundle $G$-action via $\theta$, and the composition of operating by $\gamma_1$ and $\gamma_2\in\Gamma$ must be the same as multiplying by $c(\gamma_1,\gamma_2)$ and then operating by $\gamma_1\gamma_2$. The condition of $c$ being a 2-cocycle is equivalent to the associativity of this twisted equivariant structure. 

A \textbf{$(\theta,c)$-twisted $\Gamma$-equivariant $G$-bundle} over $X$ is a pair $(\bundle,\action)$ consisting of a $G$-bundle $\bundle$ and a $(\theta,c)$-twisted $\Gamma$-equivariant structure $\action$ on $\bundle$. An isomorphism of twisted equivariants $G$-bundles is just a $\Gamma$-equivariant isomorphism of $G$-bundles. \textbf{For some motivation on why we study these objects, see the last five paragraphs of the introduction.}

A definition of $\zeta$-(poly, semi)-stability for twisted equivariant $G$-bundles --- which are, in particular, twisted equivariant $G$-Higgs bundles --- appears naturally in the work of Garc\'ia-Prada--Gothen--Mundet i Riera \cite{oscar-ignasi-gothen}. This definition fits into a Hitchin--Kobayashi correspondence for Higgs bundles with non-connected structure group, and is compatible with the description of fixed point subvarieties in moduli spaces principal bundles according to our joint work with Garc\'ia-Prada--Basu \cite{oscar-barajas-higgs}. It depends on a parameter $\zeta$, which may be thought of as the topological type of the underlying $G$-bundle.
This paper is dedicated to the construction of a projective variety parametrizing isomorphism classes of polystable $(\theta,c)$-twisted $\Gamma$-equivariant $G$-bundles, using Mumford's GIT \cite{mumford-GIT}. In particular, it shows that the (poly, semi)stability notions in the literature are also compatible with the corresponding GIT notions. The main result is the following.

\vspace{5 pt}
{\bf Main Theorem} (Theorem \ref{th-moduli-space-G-reductive}). {\it There exists a complex projective variety $\Mm(X,G,\Gamma,\theta,c)$ which is a coarse moduli space parametrizing isomorphism classes of polystable $(\theta,c)$-twisted $\Gamma$-equivariant $G$-bundles over $X$. It contains an open subvariety classifying isomorphism classes of $\cha$-stable $(\theta,c)$-twisted $\Gamma$-equivariant $G$-bundles over $X$.}
\vspace{5 pt}

As a corollary, using an equivalence of categories between principal bundles with non-connected structure group $\hat G$ and twisted equivariant bundles with structure group equal to the connected component of the identity $G$ --- see \cite{BGGM} ---, we obtain the following.

\vspace{5 pt}
{\bf Corollary} (Corollary \ref{cor-moduli-space-G-non-connected}). {\it  
Let $\hat G$ be an arbitrary --- possibly non-connected --- reductive complex Lie group, and let $\tilde X\to X$ be a $\Gamma$-bundle. Then there is a complex projective variety $\Mm_{\tilde X}(X,\hat G)$ which is a coarse moduli space parametrizing isomorphism classes of polystable $\hat G$-bundles over $X$ whose quotient by $G$ is isomorphic to $\tilde X$. It contains an open subvariety classifying isomorphism classes of stable $\hat G$-bundles over $X$.}
\vspace{5 pt}

This corollary is a GIT complement of the construction of a projective good moduli space for the moduli stack of $\hat G$-bundles given by Olsson--Reppen--Tajakka \cite{reppen}.

In order to prove the main theorem, we first construct a coarse moduli space for $G$ \textbf{semisimple}. In order to do this we need to formulate the problem in terms of GIT. First we fix an embedding $\iota:G\hookrightarrow\GL(n,\C)$ such that $\Gamma$ acts on $G$ via inner automorphisms of $\GL(n,\C)$. In Sections \ref{section-twisted-equivariant-and-pseudo-equivariant} and \ref{section-pseudo-equivariant-bundles-as-pseudo-equivariant-pairs} we show that the category of (semi)stable $(\theta,c)$-twisted $\Gamma$-equivariant $G$-bundles over $X$ may be embedded into the category of \textbf{$(\delta,\chi)$-(semi)stable pseudo-equivariant $\rho$-pairs}, for $\chi\gg\delta\gg0$ and a suitable representation $\rho:\GL(n,\C)\to\GL(W)$. A pseudo-equivariant $\rho$-pair is a triple $(E,\sigma,f)$, where $E$ is a vector bundle of rank $n$ and trivial determinant, $\sigma:E(W)\to \oo_X$ is a non-zero homomorphism and $f$ is a $\Gamma$-equivariant structure on $E$ satisfying certain compatibility condition (\ref{eq-pseudo-equivariant-pairs-f-sigma}) with $\sigma$. Here $E(W)$ is the associated vector bundle with fibre $W$. The notion of $(\delta,\chi)$-(semi)stability, which depends on two positive parameters $\delta$ and $\chi$, is defined using 1-parameter subgroups and weighted filtrations of $E$ --- see Definition \ref{def-delta,chi-semistable-pairs}.

Using pseudo-equivariant pairs and Grothendieck's Quot Scheme parametrizing quotients of $V\otimes\oo_X$ of rank $n$ and trivial determinant for some fixed vector space $V$, we build a parameter space $\cc$ of pseudo-equivariant pairs containing all the (semi)stable $(\theta,c)$-twisted $\Gamma$-equivariant $G$-bundles. Moreover, $\cc$ is equipped with a $\GL(V)$-action such that two points are isomorphic if and only if they are in the same orbit --- see Proposition \ref{prop-iso-vs-orbit}. We then construct an injective morphism to a projective space $\GG$, called the \textbf{Gieseker space} --- which depends on a positive integer $m$ --- together with a compatible $\GL(V)$-action and a suitable linearization on some very ample line bundle --- see Section \ref{section-gieseker}. With these ingredients, Mumford's GIT provides notions of (semi)stability which we show to be compatible with the notions of $(\delta,\chi)$-(semi)stability for pseudo-equivariant pairs if $\chi\gg\delta\gg0$ and $m\gg0$. We conclude that there exists a quasi-projective subvariety $\ccss\subset\cc$ of $\cc$ parametrizing semistable $(\theta,c)$-twisted $\Gamma$-equivariant $G$-bundles --- see Corollary \ref{cor-parameter-space-semistable-twisted-equivariant}. The GIT quotient $\ccss\sslash\GL(V)$ is the required moduli space of polystable $(\theta,c)$-twisted $\Gamma$-equivariant $G$-bundles, for $G$ semisimple --- see Theorem \ref{th-moduli-space-G-semisimple}.

We then proceed to give a construction of a fine moduli space for $G$ \textbf{abelian} --- see Theorem \ref{th-moduli-space-G-abelian} ---, and then use the semisimple and abelian cases to prove the main Theorem following a technique introduced by Ramanathan \cite{ramanathan1} and generalized by G\'omez--Sols \cite{gomez-sols}.

Here is the outline of the paper. In Section \ref{section-twisted-equivariant-bundles} we define $(\theta,c)$-twisted $\Gamma$-equivariant $G$-bundles and the corresponding notions of $\zeta$-(poly, semi)stability. In Section \ref{section-twisted-equivariant-and-pseudo-equivariant} we prove an equivalence of categories between $(\theta,c)$-twisted $\Gamma$-equivariant $G$-bundles and pseudo-equivariant $G$-bundles for $G$ semisimple. These are triples $(E,h,f)$ consisting of a vector bundle $E$ of rank $n$, a reduction of structure group $h$ of its bundle of frames to $G$ and a $\Gamma$-equivariant structure on $E$ satisfying a compatibility condition (\ref{eq-pseudo-equivariant-f-h}) with $h$. In Section \ref{section-semistable-pseudo-equivariant-pairs} we define pseudo-equivariant $\rho$-pairs and the corresponding notion of $(\delta,\chi)$-(semi)stability. In Section \ref{section-pseudo-equivariant-bundles-as-pairs} we embed the category of pseudo-equivariant $G$-bundles into the category of pseudo-equivariant $\rho$-pairs, where $\rho=\rhoabc:\GL(n,\C)\to\GL\left(\left(\left(\C^n\right)^{\otimes a}\right)^{\oplus b}\otimes(\bigwedge^n\C^n)^{\otimes-c}\right)$ is induced by the standard representation, for some positive integers $a,b$ and $c$. In Section \ref{section-semistable-pseudo-equivariant-as-semistable-pairs} we show that this restricts to an embedding of the category of (semi)stable pseudo-equivariant $G$-bundles into the category of $(\delta,\chi)$-(semi)stable pseudo-equivariant $\rho$-pairs, for $\chi\gg\delta\gg0$ --- see Proposition \ref{prop-semistable-pseudo-vs-pairs}.

In Section \ref{section-parameter-space} we build the parameter space $\cc$ of pseudo-equivariant pairs, equipped with a $\GL(V)$-action. In Sections \ref{section-gieseker} and \ref{section-hilbert-mumford} we find an injection into a Gieseker space, and a suitable linearization of the $\GL(V)$-action on a very ample line bundle such that GIT (semi)stability is equivalent to $(\delta,\chi)$-(semi)stability for $\chi\gg\delta\gg0$. In Section \ref{section-projective} we show that the GIT quotient $\Mm(X,G,\Gamma,\theta,c):=\ccss\sslash\GL(V)$ is projective, where $\ccss\subset\cc$ is the subset of semistable $(\theta,c)$-twisted $\Gamma$-equivariant $G$-bundles. In Section \ref{section-polystable} we show that closed $\GL(V)$-orbits in $\ccss$ are the orbits of polystable objects, hence $\Mm(X,G,\Gamma,\theta,c)$ parametrizes isomorphism classes of polystable $(\theta,c)$-twisted $\Gamma$-equivariant $G$-bundles. 

In Section \ref{section-G-abelian} we build a fine moduli space of polystable $(\theta,c)$-twisted $\Gamma$-equivariant $G$-bundles for $G$ abelian. This, together with the result for $G$ semisimple, allows us to construct the moduli space $\Mm(X,G'\times Z,\Gamma,\theta,c)$, where $G'$ is semisimple and $Z$ is abelian. Finally, using Lemma \ref{lemma-R1-from-R2}, in Section \ref{section-moduli-G-reductive} we construct the moduli space $\Mm(X,G,\Gamma,\theta,c)$ of polystable $(\theta,c)$-twisted $\Gamma$-equivariant $G$-bundles for an arbitrary connected reductive complex Lie group $G$. The moduli space $\Mm(X,G,\Gamma,\theta,c)$ is given as a Galois cover of $\Mm(X,G'\times Z,\Gamma,\theta,c)$, where $G'=[G,G]$ and $Z$ is the centre of $G$.



Beyond its intrinsic geometric interest, we believe that a construction of the moduli space of  polystable \((\theta, c)\)-twisted $\Gamma$-equivariant \(G\)-bundles was overdue due to their important role in the literature. For example, the category of twisted equivariant $G$-bundles for $\theta$ and $c$ trivial is just the category of $\Gamma$-equivariant $G$-bundles. This has been extensively studied, for example in relation to \textbf{parabolic bundles} in the works of Balaji--Seshadri \cite{balaji-seshadri}, Biswas \cite{biswas} and Teleman--Woodward \cite{teleman}, and in the contex of the \textbf{geometric Langlands programme} in the works of Ben-Zvi--Francis--Nadler \cite{nadler1} and Chriss--Ginzburg \cite{ginzburg}. In a paper in progress with Gallego \cite{gallego}, we will prove an equivalence of categories between \((\theta, c)\)-twisted $\Gamma$-equivariant \(G\)-bundles over $X$ and parabolic bundles twisted by a gerbe over $X/\Gamma$, for general $\theta$ and $c$.

When $c$ is trivial, the moduli stack of $\theta$-twisted $\Gamma$-equivariant $G$-bundles appears in relation with \textbf{parahoric bundles} in the work of Damiolini \cite{damiolini1} and Damiolini--Hong \cite{damiolini2}. Pappas--Rapoport \cite{pappas-rapoport} consider a more general setup where $G$ is replaced by a --- non-necessarily constant --- group scheme $\mathcal G'$ over $X$, in relation to tamely ramified bundles. The category of $\theta$-twisted $\Gamma$-equivariant bundles also plays an important role in the study of the moduli space of \textbf{$G$-Higgs bundles} by Donagi--Gaitsgory \cite{donagi-gaitsgory}.

In full generality, $(\theta,c)$-twisted $\Gamma$-equivariant $G$-bundles appear naturally in the study of \textbf{fixed point varieties of finite group actions} on the moduli space $\Mm(X,G)$ of $G$-bundles according to our joint work with Garc\'ia-Prada--Basu \cite{oscar-barajas-higgs}. More precisely, it is straightforward to see that $(\theta,c)$-twisted $\Gamma$-equivariant $G$-bundles come up when considering the action of $\Gamma$ on $\Mm(X,G)$ such that $\gamma\in\Gamma$ sends a $G$-bundle $Q$ to $\gamma^*\theta_{\gamma}^{-1}(Q)$, given by pullback and extension of structure group. Perhaps it is less straightforward to see that twisted equivariant bundles --- on an étale cover of $X$ --- appear naturally as fixed points for the action --- on $\Mm(X,G)$ --- of a finite subgroup $\Lambda$ of $Z$-bundles over $X$ by ``tensorization'', via a \textbf{Prym--Narasimhan--Ramanan construction}. This is proved in our joint paper with García--Prada \cite{PNR}, which is a generalization of the construction given by Narasimhan--Ramanan \cite{narasimhan-ramanan}. In this sense, moduli spaces of twisted equivariant bundles are a generalization of Prym varieties.

Twisted equivariant bundles also appear in the study of principal bundles with non-connected structure group and \textbf{non-abelian Hodge theory} --- see our joint work with García-Prada--Gothen--Mundet i Riera \cite{BGGM}, and the work of García-Prada--Gothen--Mundet i Riera \cite{oscar-ignasi-gothen}.

Our results lay the groundwork for a deeper exploration of the geometry and topology of these moduli spaces, such as Harder--Narasimhan stratifications. An obvious complementary direction of study would be a stack-theoretic construction using \textbf{beyond GIT} techniques developed by Halpern--Leistner \cite{beyond-git} and others. Another very relevant direction of research is the generalization to twisted equivariant Higgs pairs, which are very interesting objects from the point of view of non-abelian Hodge theory and the geometric Langlands programme as noted above. Finally, by allowing for the image of $\theta$ to contain anti-holomorphic automorphisms of $G$, one could enter the realm of \textbf{pseudo-real $G$-bundles} introduced by Biswas, García-Prada, Huismann and Hurtubise in \cite {oscar-biswas-hurtubise} and \cite{BHH}. 

\section{Twisted equivariant \texorpdfstring{$G$}{G}-bundles}\label{section-twisted-equivariant-bundles}

Consider a compact Riemann surface $X$ and a connected reductive complex Lie group $G$ with center $Z$. 
Let $\Gamma$ be a finite group of automorphisms of $X$ acting on the right and take a homomorphism 
$$\theta:\Gamma\to\Aut(G);\,\gamma\mapsto\tg.$$
A \textbf{2-cocycle} of $\Gamma$ with values in $Z$ is a map $c:\Gamma\times\Gamma\to Z$ which satisfies $c(\gamma,1)=c(1,\gamma)=1$ and
\begin{equation}\label{eq-2-cocycle}
  \theta_{\gamma_0}(c(\gamma_1,\gamma_2))c(\gamma_0,\gamma_1\gamma_2)
  =c(\gamma_0,\gamma_1)c(\gamma_0\gamma_1,\gamma_2).
\end{equation} 
for every $\gamma,\gamma_0,\gamma_1,\gamma_2$ and $\gamma_3\in\Gamma$. 
Note that $\Int(G)$ acts trivially on $Z$, so (\ref{eq-2-cocycle}) only depends on the outer class of $\theta$. The set of 2-cocycles forms a group which we denote by $Z^2_{\theta}(\Gamma,Z)$. There is an action of the group of maps $s:\Gamma\to Z$ on $Z^2_{\theta}(\Gamma,Z)$, namely 
\begin{align*}
    Z^{\Gamma}\times Z^2_{\theta}(\Gamma,Z)\to Z^2_{\theta}(\Gamma,Z);\,(c,s)\mapsto s\cdot c:\Gamma\times\Gamma\to Z,\\
    s\cdot c(\gamma_1,\gamma_2)=c(\gamma_1,\gamma_2)s(\gamma_1)\theta_{\gamma_1}(s(\gamma_2))s(\gamma_1\gamma_2)^{-1}.
\end{align*}
The quotient of $Z^2_{\theta}(\Gamma,Z)$ by this $Z^{\Gamma}$-action is the \textbf{second cohomology group} $H^2_{\theta}(\Gamma,Z)$.
Fix a 2-cocycle $c\in Z^2_{\theta}(\Gamma,Z)$.

Given a holomorphic $G$-bundle $\bundle$ over $X$, a \textbf{right $(\theta,c)$-twisted $\Gamma$-equivariant action} on $\bundle$ is a holomorphic map
    \begin{equation}\label{eq-twisted-action-M}
        \bundle\times\Gamma\to \bundle;\,(q,\gamma)\mapsto p\bullet\gamma
    \end{equation}
such that the following hold.
\begin{itemize}
    \item For every $q\in \bundle$, $q\bullet 1=q$.
    \item For every $q\in \bundle$, $g\in G$ and $\gamma$ and $\gamma'\in\Gamma$,
    \begin{equation}\label{eq-twisted-equivariant-axioms}
        (q g)\bullet\gamma=(q\bullet\gamma)\theta_{\gamma}^{-1}(g)\andd(q\bullet\gamma)\bullet\gamma'=(qc(\gamma,\gamma'))\bullet(\gamma\gamma').
    \end{equation}
    
    \item The diagramme
      \begin{equation*}
          \begin{tikzcd}
            \bundle\arrow[r,"\bullet\gamma"]\arrow[d] & \bundle\arrow[d]\\
            X\arrow[r,"\cdot\gamma"] & X
          \end{tikzcd}
      \end{equation*}
      commutes for each $\gamma\in\Gamma$.
\end{itemize}

\begin{definition}
A \textbf{$(\theta,c)$-twisted $\Gamma$-equivariant $G$-bundle} over $X$ is a pair $(\bundle,\bullet)$ consisting of a holomorphic $G$-bundle $\bundle$ over $X$ and a $(\theta,c)$-twisted $\Gamma$-equivariant action $\bullet$ on $\bundle$.
Whenever there is no danger of confusion we may omit $\theta,c,\Gamma$ or $\bullet$ from the notation.
An \textbf{isomorphism} of $(\theta,c)$-twisted $\Gamma$-equivariant $G$-bundles is a $\Gamma$-equivariant isomorphism of $G$-bundles.
\end{definition}

\begin{remark}\label{remark-replace-theta}
    By \cite[Proposition 2.16]{BGGM}, the categories of $(\theta,c)$-twisted $\Gamma$-equivariant $G$-bundles and $(\theta',c')$-twisted $\Gamma$-equivariant $G$-bundles are equivalent whenever $\theta$ and $\theta'$ are in the same outer class, for some 2-cocycle $c'$ depending on $c$ and $\theta$. The 2-cocycle $c'$ is not unique, in fact we may set $\theta'=\theta$ and choose $c'$ to be any 2-cocycle cohomologous to $c$.
\end{remark}

\subsection{Semistable twisted equivariant \texorpdfstring{$G$}{G}-bundles}\label{section-semistable-twisted-equivariant-G-bundles}

We recall the stability notions for twisted equivariant $G$-bundles using the approach in \cite{oscar-ignasi-gothen}. By Remark \ref{remark-replace-theta}, after possibly replacing $\theta$ we may assume that there is a $\Gamma$-invariant maximal compact subgroup $K$ of $G$ with Lie algebra $\lie k$. Choose a $G$-invariant non-degenerate pairing $\langle\cdot,\cdot\rangle$ on $\lie g$. Every element $s\in i\lie k$ determines a parabolic subgroup $P_s$ with Lie algebra $\lie p_s$ of $G$, namely
\begin{equation}\label{eq-def-Ps}
    P_s:=\{g\in G\suhthat e^{ts}ge^{-ts}\,\text{remains bounded as}\;t\to\infty\}.
\end{equation}
If $L_s$ is its Levi subgroup, then $K_s:=K\cap L_s$ is a maximal compact subgroup of $L_s$ and the embedding $K_s\hookrightarrow P_s$ is a homotopy equivalence. Now let $\bundle$ be a $G$-bundle with a holomorphic reduction $\tau\in H^0(X,\bundle(G/P_s))$, where $\bundle(G/P_s)$ is the $G/P_s$-bundle associated to $E$ via the natural $G$-action; we denote by $\bundle_{\tau}:=\tau^*\bundle$ the corresponding $P_s$-bundle over $X$. Then there is a smooth reduction of structure group $\tau'\in\Omega^0(X,\bundle_{\tau}/K_s)$, and we may equip the corresponding $K_s$-bundle with a connection $A$ with curvature $F_A$. We define
\begin{equation}\label{eq-def-deg}
    \deg E(\tau,s):=\frac{i}{2\pi}\int_X\langle s,F_A\rangle.
\end{equation}

Note that, if $s$ is in the $\Gamma$-invariant part $i\lie k^{\Gamma}$ of $i\lie k$, then $P_s$ is $\Gamma$-invariant and so is $L_s$, since
\begin{equation}\label{eq-def-levi}
    L_s=\{g\in G\suhthat \lim_{t\to\infty} e^{ts}ge^{-ts}=g\}.
\end{equation}
In particular, $\theta$ induces $\Gamma$-actions on $G/P_s$ and $P_s/L_s$. If $\bundle$ is a $(\theta,c)$-twisted $\Gamma$-equivariant $G$-bundle then, combining the twisted equivariant action on $\bundle$ with the $\Gamma$-action on $G/P_s$, we obtain a $\Gamma$-equivariant structure on the fibre bundle $\bundle(G/P_s)$ by \cite[Proposition 3.16]{BGGM}. This lets us introduce the corresponding space of $\Gamma$-invariant reductions $H^0(X,\bundle(G/P_s))^{\Gamma}$. Given such a reduction $\tau\in H^0(X,\bundle(G/P_s))^{\Gamma}$, the corresponding $P_s$-bundle $\bundle_{\tau}$ inherits a $(\theta,c)$-twisted $\Gamma$-equivariant structure. Similarly,
the fibre bundle $\bundle(P_s/L_s)$ comes equipped with a $\Gamma$-equivariant action, with a corresponding space of $\Gamma$-invariant reductions $H^0(X,\bundle_{\tau}(P_s/L_s))^{\Gamma}$.


\begin{definition}\label{def-semistable-twisted-equivariant}
Let $\zk$ be the centre of a $\Gamma$-invariant maximal compact subalgebra $\lie k\subset\lie g$, and fix $\cha\in i\zk^{\Gamma}$. A $(\theta,c)$-twisted $\Gamma$-equivariant $G$-bundle $(\bundle,\bullet)$ over $X$ is:
\begin{itemize}
    \item \textbf{$\cha$-semistable} if $\deg E(\tau,s)\ge \pair{\cha}s$ for any $s\in i\lie k^{\Gamma}$ and any $\Gamma$-invariant reduction of structure group $\tau\in H^0(X,E(G/P_s))^{\Gamma}$.
    \item \textbf{$\cha$-stable} if $\deg E(\tau,s)> \pair{\cha}s$ for any $s\in i\lie k^{\Gamma}$ and any reduction of structure group $\tau\in H^0(X,E(G/P_s))^{\Gamma}$.
    \item \textbf{$\cha$-polystable} if it is semistable and, for every $s\in i\lie k^{\Gamma}$ and $\tau\in H^0(X,E(G/P_s))^{\Gamma}$ such that $\deg E(\tau,s)=\pair{\cha}s$, there is a further $\Gamma$-invariant holomorphic reduction of structure group $\tau'\in H^0(X,E_{\tau}(P_s/L_s))^{\Gamma}$.
\end{itemize}
If $\cha=0$, we omit it.
\end{definition}

\begin{remark}
If $\Gamma$ is the trivial group, we get the usual (poly, semi)stability notions for $G$-bundles.
\end{remark}

In Section \ref{section-bounded} we will need the following result, which is a special case of \cite[Proposition 4.1]{oscar-suratno}.

\begin{proposition}\label{prop-semistability-underlying-bundle}
A twisted equivariant $G$-bundle is $\cha$-(poly)semistable if and only if the underlying $G$-bundle is $\cha$-(poly)semistable.
\end{proposition}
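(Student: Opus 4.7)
The plan is to handle the two implications separately, with the forward direction being essentially a tautology and the reverse direction relying on a canonicity property of destabilizing reductions. For the forward implication, $\zeta$-(semi)stability of the underlying bundle $\bundle$ tests the inequality $\deg\bundle(\tau,s)\geq\langle\zeta,s\rangle$ against \emph{all} reductions $\tau\in H^0(X,\bundle(G/P_s))$, in particular against the $\Gamma$-invariant ones appearing in Definition \ref{definition-semistable-twisted-equivariant}. The polystable version transfers analogously, once the Levi reduction witnessing polystability of $\bundle$ can be produced $\Gamma$-invariantly, which will follow from the converse.

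For the substantive direction I plan to invoke a $\zeta$-twisted Harder--Narasimhan--Behrend theorem for principal $G$-bundles: every $\zeta$-unstable $G$-bundle $\bundle$ carries a distinguished pair $(s,\tau)$ with $s\in i\lie k$ and $\tau\in H^0(X,\bundle(G/P_s))$ maximally violating the $\zeta$-inequality, unique up to the natural equivalence and therefore preserved by every holomorphic automorphism of $\bundle$. This is precisely the content of \cite[Proposition 4.1]{oscar-suratno}. Assume $(\bundle,\action)$ is $\zeta$-semistable while the underlying $\bundle$ is not. For each $\gamma\in\Gamma$ the twisted operator $\action\gamma\colon\bundle\to\bundle$ is a holomorphic isomorphism covering $\gamma$ that intertwines the $G$-action through $\theta_\gamma^{-1}$; transporting the canonical data via $\action\gamma$ and using $\zeta\in i\zk^\Gamma$ forces $\theta_\gamma(s)=s$, so that $s\in i\lie k^\Gamma$ and $P_s$ is $\Gamma$-invariant, and $\gamma^*\tau=\tau$. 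The reduction $\tau$ is therefore $\Gamma$-invariant and destabilizing, contradicting the hypothesis on $(\bundle,\action)$.

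For the polystable refinement, given $s\in i\lie k^\Gamma$ and a $\Gamma$-invariant $\tau\in H^0(X,\bundle(G/P_s))^\Gamma$ saturating the $\zeta$-inequality, $\zeta$-polystability of the bare $G$-bundle produces a Levi reduction $\tau'\in H^0(X,\bundle_\tau(P_s/L_s))$; applying the same canonicity, now to the socle-type refinement packaged with the Harder--Narasimhan--Behrend reduction, will force $\tau'$ to be $\Gamma$-invariant as well, yielding $\zeta$-polystability of $(\bundle,\action)$.

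The main obstacle is thus the canonicity of the $\zeta$-shifted Harder--Narasimhan--Behrend datum and its Levi refinement: I need genuine uniqueness, not merely existence up to conjugation, in order that the twisted $\Gamma$-action have no room to move the canonical reduction inside the set of all destabilizing ones. As noted, this is exactly \cite[Proposition 4.1]{oscar-suratno}, so in practice the proof will be short once that statement is quoted in its sharpest form.
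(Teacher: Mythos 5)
The paper does not actually prove this proposition: it is imported wholesale as ``a special case of \cite[Proposition 4.1]{oscar-suratno}'', so there is no in-paper argument to compare yours against. Judged on its own terms, your outline of the easy implication (underlying (semi)stable $\Rightarrow$ twisted-equivariant (semi)stable, because the equivariant condition quantifies over the smaller set of $\Gamma$-invariant pairs $(s,\tau)$) and of the semistable converse via uniqueness of the Harder--Narasimhan--Behrend reduction is the standard and correct route; note only that your appeal to \cite[Proposition 4.1]{oscar-suratno} for the canonical-reduction theorem is circular, since according to this paper that proposition \emph{is} the statement you are trying to prove, and the uniqueness of the canonical reduction must instead be quoted from Behrend or Anchouche--Azad--Biswas (together with a naturality statement under the twisted isomorphisms $\action\gamma$, which cover a nontrivial automorphism of $X$ and twist the $G$-action by $\theta_\gamma^{-1}$, and a reduction of $\cha$-semistability to ordinary semistability for bundles of topological type $\cha$).

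There are two genuine gaps in the polystable part. First, the implication ``twisted-equivariant $\cha$-polystable $\Rightarrow$ underlying $G$-bundle $\cha$-polystable'' is never addressed: your paragraphs only treat the passage from polystability of the bare bundle to polystability of the equivariant object. For the missing direction one must, given an \emph{arbitrary} (not $\Gamma$-invariant) pair $(s,\tau)$ saturating the inequality, produce a Levi reduction; this is usually done either through the Hitchin--Kobayashi correspondence (a $\Gamma$-invariant Hermite--Einstein reduction is in particular a Hermite--Einstein reduction) or through uniqueness of the Jordan--H\"older reduction, and neither mechanism appears in your sketch. Second, in the direction you do treat, the claim that ``canonicity of the socle-type refinement'' forces the Levi reduction $\tau'$ to be $\Gamma$-invariant is asserted rather than argued: the Jordan--H\"older reduction is unique only up to isomorphism, so $\Gamma$ need not fix any particular $\tau'$ on the nose. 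The concrete fix is that the set $H^0(X,\bundle_\tau(P_s/L_s))$ of Levi reductions is a torsor under sections of the unipotent-radical bundle, an iterated extension of vector groups on which the finite group $\Gamma$ acts affinely, so a nonempty $\Gamma$-stable set of reductions contains a fixed point by averaging; without some such argument the step does not close.
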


\section{Twisted equivariant \texorpdfstring{$G$}{G}-bundles and pseudo-equivariant \texorpdfstring{$G$}{G}-bundles}\label{section-twisted-equivariant-and-pseudo-equivariant}

Pick a compact Riemann surface $X$, a finite group $\Gamma$ of complex automorphisms of $X$, a connected \textbf{semisimple} complex Lie group $G$ with center $Z$, a homomorphism $\theta:\Gamma\to\Aut(G)$ and a 2-cocycle $c\in Z^2_{\theta}(\Gamma,Z)$. The reason why we restrict to $G$ semisimple is that we first construct the moduli space under this assumption, and then extend our construction to arbitrary reductive $G$ in Section \ref{section-moduli-space-reductive}. In particular, since the centre $Z$ is discrete the stability parameter $\zeta=0$ and we omit it.

\subsection{Pseudo-equivariant \texorpdfstring{$G$}{G}-bundles}\label{section-pseudo-equivariant-bundles}

The \textbf{twisted product} $G\times_{(\theta,c)}\Gamma$ is the set $G\times\Gamma$ equipped with the group structure $(g,\gamma)(g',\gamma')=(g\tg(g')c(\gamma,\gamma'),\gamma\gamma')$ for every $(g,\gamma)$ and $(g',\gamma')$ in $G\times\Gamma$. 

Since $G\times_{(\theta,c)}\Gamma$ is a finite extension of $G$ and $G$ is reductive, the group $G\times_{(\theta,c)}\Gamma$ is reductive. Therefore, there exists an embedding $\iota:G\times_{(\theta,c)}\Gamma\hookrightarrow\GL(n,\C)$. 
Since $G$ is semisimple, $G=[G,G]$ and so $G$ has no non-trivial characters, which implies that $\iota(G)\subset \SL(n,\C)$. 
We fix an embedding $\iota:G\times_{(\theta,c)}\Gamma\hookrightarrow\SL(n,\C)$ once and for all. In particular every automorphism in $\theta(\Gamma)$ is realized as the restriction of an inner automorphism of $\SL(n,\C)$, namely $\tg=\Int_{(1,\gamma)}\vert_G$ for each $\gamma\in\Gamma$, where we write $(1,\gamma)$ instead of $\iota(1,\gamma)$ by abuse of notation. Here 
\begin{equation*}
    \Int_{(1,\gamma)}:\GL(n,\C)\to\GL(n,\C);\,g\mapsto(1,\gamma)g(1,\gamma)^{-1}
\end{equation*}
denotes the conjugation automorphism.

\begin{definition}
A \textbf{$\Gamma$-pseudo-equivariant $G$-bundle} of rank $n$ over $X$ is a triple $(E,h,f)$ consisting of a vector bundle $E$ of rank $n$, a reduction $h\in H^0(X,\iso(\oo_X^{\oplus n},E)/G)$ of structure group and a family of vector bundle isomorphisms $f=\{\fg:E\xrightarrow{\sim}\gamma^*E\}_{\gamma\in\Gamma}$ satisfying
\begin{equation}\label{eq-pseudo-equivariant-composition-f}
 f_1=\id,\; (\gamma^*f_{\gamma'})\circ f_{\gamma}=f_{\gamma\gamma'}
\end{equation}
and
\begin{equation}\label{eq-pseudo-equivariant-f-h}
    f_{\gamma}(h)=(\gamma^*h)(1,\gamma)^{-1}.
\end{equation}
for every $\gamma$ and $\gamma'\in\Gamma$. Here $\iso(\oo_X^{\oplus n},E)$ is the bundle of frames of $E$, and $f_\gamma(h)\in H^0(X,\iso(\oo_X^{\oplus n},\gamma^*E)/G)$ is the induced reduction of structure group. When there is no risk of confusion, we may omit $\Gamma$ from the notation. We sometimes call $f$ the \textbf{pseudo-equivariant action or structure}.

An \textbf{isomorphism} of pseudo-equivariant $G$-bundles $(E,h,f)\to(E',h',f')$ is an isomorphism of vector bundles $\phi:E\xrightarrow{\sim} E'$ such that $\phi(h)=h'$ and $\fg'\circ\phi=(\gamma^*\phi)\circ\fg$ for every $\gamma\in\Gamma$.
\end{definition}

\begin{remark}\label{remark-degree-0}
    For any pseudo-equivariant $G$-bundle $(E,h,f)$, the underlying vector bundle $E$ has trivial determinant, and in particular degree $0$. This is due to the fact that the bundle of frames of $E$ has a reduction of structure group to $G\subset SL(n,\C)$.
\end{remark}

\subsection{Twisted equivariant \texorpdfstring{$G$}{G}-bundles as pseudo-equivariant \texorpdfstring{$G$}{G}-bundles}\label{section-twisted-equivariant-as-pseudo-equivariant}

\begin{proposition}\label{prop-equivalence-of-categories-twisted-pseudo}
The following categories are equivalent:
\begin{enumerate}
    \item The category $\mathcal A$ of twisted equivariant $G$-bundles over $X$.
    \item The category $\mathcal B$ of pseudo-equivariant $G$-bundles over $X$.
\end{enumerate}
\end{proposition}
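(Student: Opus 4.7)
The plan is to construct explicit mutually inverse functors $F:\mathcal A\to\mathcal B$ and $H:\mathcal B\to\mathcal A$. On the level of underlying bundles these amount to extension and reduction of structure group along $\iota:G\hookrightarrow\SL(n,\C)$; all the content of the statement lies in translating the twisted action $\bullet$ into the pseudo-equivariant family $(\fg)_{\gamma\in\Gamma}$ and back. The bridge between the two sides consists of the two identities
\[
\iota(\theta_\gamma^{-1}(g))=(1,\gamma)^{-1}\iota(g)(1,\gamma)\andd(1,\gamma)(1,\gamma')=\iota(c(\gamma,\gamma'))\cdot(1,\gamma\gamma'),
\]
both of which follow directly from the definition of the twisted product $G\times_{(\theta,c)}\Gamma$ together with $\tg=\Int_{(1,\gamma)}|_G$.

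To define $F$, given $(\bundle,\bullet)$ I form the extended $\SL(n,\C)$-bundle $\widetilde\bundle:=\bundle\times_\iota\SL(n,\C)$ and set $E:=\widetilde\bundle(\C^n)$; the inclusion $\bundle\subset\widetilde\bundle$ encodes the reduction $h$. For each $\gamma\in\Gamma$ I define a total-space map $\widetilde\fg:\widetilde\bundle\to\widetilde\bundle$ covering the action of $\gamma$ on $X$ by $[q,s]\mapsto[q\bullet\gamma,(1,\gamma)^{-1}s]$, for $q\in\bundle$ and $s\in\SL(n,\C)$. Well-definedness on the $G$-quotient uses the first half of (\ref{eq-twisted-equivariant-axioms}) together with the first bridging identity above, while $\SL(n,\C)$-equivariance in the free factor is immediate, so $\widetilde\fg$ descends to a vector bundle isomorphism $\fg:E\to\gamma^*E$. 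The relation (\ref{eq-pseudo-equivariant-f-h}) is read off by evaluating $\widetilde\fg$ on a point $q\in\bundle$, and the composition law (\ref{eq-pseudo-equivariant-composition-f}) is a direct computation combining the second half of (\ref{eq-twisted-equivariant-axioms}) with the second bridging identity.

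The inverse functor $H$ is defined by reversing these constructions. Given $(E,h,f)$ I let $\widetilde\bundle:=\iso(\oo_X^{\oplus n},E)$, let $\bundle\subset\widetilde\bundle$ be the $G$-subbundle determined by $h$, and view each $\fg$ as a principal $\SL(n,\C)$-bundle map $\widetilde\fg$ on $\widetilde\bundle$ covering $\gamma$. Condition (\ref{eq-pseudo-equivariant-f-h}) says exactly that $\widetilde\fg$ sends $\bundle$ into $(\gamma^*\bundle)\cdot(1,\gamma)^{-1}$, so the recipe $q\bullet\gamma:=\widetilde\fg(q)\cdot(1,\gamma)$ takes values in $\gamma^*\bundle$ and lifts the action of $\gamma$ to $\bundle$. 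Both conditions in (\ref{eq-twisted-equivariant-axioms}) follow by running the computations of the previous paragraph in reverse, the 2-cocycle condition on $c$ being precisely what ensures associativity of the resulting $\bullet$.

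Finally, I will check that $F$ and $H$ are quasi-inverse and that morphisms correspond. On underlying bundles the two constructions are extension and reduction of structure group along $\iota$ and are therefore visibly mutually inverse, and the explicit formulas $q\bullet\gamma\leftrightarrow\widetilde\fg(q)=(q\bullet\gamma)(1,\gamma)^{-1}$ invert each other on the nose. In both categories morphisms are bundle isomorphisms preserving the $G$-structure and intertwining the $\Gamma$-data, and these conditions correspond term-by-term under $F$ and $H$. I do not anticipate any real obstacle; the only point that deserves careful verification is the conversion of the $c$-twist on the $\mathcal A$-side into the strict composition law on the $\mathcal B$-side via the bridging identities, which is precisely the reason for the appearance of $c$ in the multiplication on $G\times_{(\theta,c)}\Gamma$.
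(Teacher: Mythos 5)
Your proposal is correct and follows essentially the same route as the paper: extend the structure group along $\iota$, and convert the $(\theta,c)$-twisted action into a strict pseudo-equivariant family by composing with right multiplication by $(1,\gamma)^{-1}$, so that the two bridging identities in $G\times_{(\theta,c)}\Gamma$ absorb both the $\theta$-twist and the cocycle $c$. The paper phrases the intermediate step via the twisted bundle $\gamma^*\Int_{(1,\gamma)}(\bundle)$ rather than your direct formula $[q,s]\mapsto[q\bullet\gamma,(1,\gamma)^{-1}s]$, but the key computation and the inverse construction are the same.
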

\begin{proof}
We construct an invertible functor $\mathcal A\to\mathcal B$. Let $\bundle_G$ be a $G$-bundle equipped with a twisted equivariant $\Gamma$-action. By (\ref{eq-twisted-equivariant-axioms}) this action is equivalent to a set of $G$-bundle isomorphisms $\fg':\bundle_G\to\gamma^*\tg(\bundle_G)$ satisfying
\begin{equation}\label{eq-twisted-equivariant-vs-pseudo}
    f'_1=1\andd(\gamma^*f'_{\gamma'})\circ f'_{\gamma}=f'_{\gamma\gamma'}c(\gamma,\gamma')
\end{equation}
for each $\gamma$ and $\gamma'\in\Gamma$, where we identify elements of $\GL(n,\C)$ with maps via the principal bundle action. 

Let $\bundle:=\bundle_G(\GL(n,\C))$ be the extension of structure group and let $h\in H^0(X,\bundle/G)$ be the corresponding reduction of structure group. Recall that $\Int_{(1,\gamma)}(\bundle)$ may be defined as the $\GL(n,\C)$-bundle that has the same total space as $\bundle$, but $\GL(n,\C)$-action twisted by $\Int_{(1,\gamma)}^{-1}$.  The set of isomorphisms $(\fg')_{\gamma\in\Gamma}$ induces a set of isomorphisms $\fg:\bundle\to \gamma^*\Int_{(1,\gamma)}(\bundle)$ satisfying (\ref{eq-twisted-equivariant-vs-pseudo}) and $\fg(h)=\gamma^*h$ --- here $h$ is regarded as a section of $\Int_{(1,\gamma)}(\bundle)$ via the identification of its underlying total space with $\bundle$. 

Multiplication by $(1,\gamma)^{-1}$ --- via the $\GL(n,\C)$-action on $\bundle$ --- provides an isomorphism $\gamma^*\Int_{(1,\gamma)}(\bundle)\to \gamma^*(\bundle)$, hence after composing we get a set of $\GL(n,\C)$-bundle isomorphisms
\begin{equation*}
    \fg:=(1,\gamma)^{-1}\gamma^*f_{\gamma}:\bundle\xrightarrow{\fg'}\gamma^*\Int_{(1,\gamma)}(\bundle)\xrightarrow{(1,\gamma)^{-1}}\gamma^*(\bundle)
\end{equation*}
sending $h$ to $\gamma^*h(1,\gamma)^{-1}$.
Moreover, (\ref{eq-twisted-equivariant-vs-pseudo}) implies (\ref{eq-pseudo-equivariant-composition-f}) since, for every $q\in \bundle$,
\begin{align*}
    (\gamma^*f_{\gamma'})\circ f_{\gamma}(q)&=(1,\gamma')^{-1}\gamma^*f'_{\gamma'}\circ (1,\gamma)^{-1}f'_{\gamma}(q)
    \\&=
    \gamma^*f'_{\gamma'}(\fg'(q)(1,\gamma)^{-1})(1,\gamma')^{-1}
    \\&=
    \left((\gamma^*f'_{\gamma'})\circ\fg'(q)\right)
    \Int_{(1,\gamma')}^{-1}((1,\gamma)^{-1})(1,\gamma')^{-1}
    \\&=
    f'_{\gamma\gamma'}(qc(\gamma,\gamma'))
    \left((1,\gamma')\Int_{(1,\gamma')}^{-1}((1,\gamma))\right)^{-1}
    \\&=
    f'_{\gamma\gamma'}(q)\Int_{(1,\gamma\gamma')}^{-1}(c(\gamma,\gamma'))
    \left((1,\gamma)(1,\gamma')\right)^{-1}
    \\&=
    f'_{\gamma\gamma'}(q)
    (1,\gamma\gamma')^{-1}c(\gamma,\gamma')(1,\gamma\gamma')
    \left((c(\gamma,\gamma'),\gamma\gamma')\right)^{-1}
    \\&=
    f'_{\gamma\gamma'}(q)
    (1,\gamma\gamma')^{-1}c(\gamma,\gamma')(1,\gamma\gamma')
    (1,\gamma\gamma')^{-1}
    c(\gamma,\gamma')^{-1}
    \\&=
    f'_{\gamma\gamma'}(q)
    (1,\gamma\gamma')^{-1}
    \\&=
    f_{\gamma\gamma'}(q).
\end{align*}

Let $E:=\bundle(\C^n)=\bundle\times_{\GL(n,\C)}\C^n$ be the associated vector bundle of rank $n$. Then the isomorphisms $\fg$ induce isomorphisms $\fg:E\xrightarrow{\sim}\gamma^*E$, and $(E,h,f=\{\fg\}_{\gamma\in\Gamma})$ is a pseudo-equivariant $G$-bundle. 
Therefore, starting with a twisted equivariant \texorpdfstring{$G$}{G}-bundle $\bundle_G$, we have obtained a pseudo-equivariant $G$-bundle $(E,h,f)$. Furthermore, an isomorphism of twisted equivariant $G$-bundles induces, by extension of structure group, an isomorphism of pseudo-equivariant $G$-bundles. Working backwards we see that this functor is invertible.
\end{proof}

\begin{definition}\label{def-semistable-pseudo-equivariant-bundle}
A pseudo-equivariant $G$-bundle is called \textbf{(semi)stable} if the corresponding twisted equivariant $G$-bundle is so.
\end{definition}

\section{Pseudo-equivariant \texorpdfstring{$G$}{G}-bundles as pseudo-equivariant \texorpdfstring{$\rho$}{rho}-pairs}\label{section-pseudo-equivariant-bundles-as-pseudo-equivariant-pairs}

Following the identification of $G$-bundles with swamps in \cite[Section 2.4]{schmitt}, we identify pseudo-equivariant $G$-bundles with analogous objects. We still assume that $G$ is a connected semisimple complex Lie group in this section. As in Section \ref{section-pseudo-equivariant-bundles}, fix the data $X$, $\Gamma$, $\theta$, $c$ and $\iota:G\times_{(\theta,c)}\Gamma\hookrightarrow\SL(n,\C)$.

\subsection{Semistable pseudo-equivariant pairs}\label{section-semistable-pseudo-equivariant-pairs}
Recall that a representation $\rho:\GL(n,\C)\to\GL(W)$ is \textbf{homogeneous} if $\rho(z\id)=z^k\id$ for some integer $k$, which is called the \textbf{degree} of $\rho$, or $k=\deg\rho$.
Let $\rho:\GL(n,\C)\to\GL(W)$ be a homogeneous representation. Assume that $W$ is also equipped with a right linear $\GL(n,\C)$-action commuting with $\rho$, denoted by
\begin{equation}\label{eq-right-GL-action}
    W\times\GL(n,\C)\to W;\,(w,g)\mapsto w\cdot g.
\end{equation}
Given a vector bundle $E\to X$ of rank $n$, denote by
\begin{equation*}
    E(W):=\iso(\oo_X^{\oplus n},E)\times_{\rho} W
\end{equation*} 
the associated vector bundle with fibre $W$.
Since the right $\GL(n,\C)$-action (\ref{eq-right-GL-action}) commutes with $\rho$, $E(W)$ is naturally equipped with a right $\GL(n,\C)$-action.

\begin{definition}\label{def-pseudo-equivariant-pair}
A \textbf{$\Gamma$-pseudo-equivariant $\rho$-pair} --- abbreviated pseudo-equivariant pair when $\Gamma$ and $\rho$ are understood from context --- is a triple $(E,\sigma,f)$ consisting of a vector bundle $E$ of rank $n$, a non-zero homomorphism $\sigma:E(W)\to \oo_X$ and a family of vector bundle isomorphisms $f=\{\fg:E\xrightarrow{\sim}\gamma^*E\}_{\gamma\in\Gamma}$ satisfying
(\ref{eq-pseudo-equivariant-composition-f}) --- namely
\begin{equation*}
 f_1=\id\andd (\gamma^*f_{\gamma'})\circ f_{\gamma}=f_{\gamma\gamma'}
\end{equation*}
--- and
\begin{equation}\label{eq-pseudo-equivariant-pairs-f-sigma}
    \fg(\sigma)=\gamma^*\sigma\cdot(1,\gamma)^{-1}.
\end{equation}

An \textbf{isomorphism} of pseudo-equivariant pairs $(E,\sigma,f)\xrightarrow{\sim}(E',\sigma',f')$ consists of an isomorphism of vector bundles $\phi:E\xrightarrow{\sim}E'$ such that $(\gamma^*\phi)\circ \fg=\fg'\circ\phi$ for each $\gamma\in\Gamma$ and $\phi(\sigma)=\sigma'$.
\end{definition}

In order to define the stability notions for pseudo-equivariant $\rho$-pairs, we introduce some notation.

\begin{definition}
Consider a vector space $V$ over a field $k$ and a representation $\trho:\GL(n,k)\to\GL(V)$. A \textbf{1-parameter subgroup} of $\GL(n,k)$ is a group homomorphism $\lambda:\mathbb G_m(k)\to \GL(n,k)$, where $\mathbb G_m(k)$ is the multiplicative group of $k$. Since $\mathbb G_m(k)$ is reductive, the representation $\trho\circ \lambda$ is completely reducible, hence we can decompose $V$ as a sum of weight spaces $\bigoplus_{j=1}^JV^{\beta_j}$, where $t\in \mathbb G_m(k)$ acts on $V^{\beta_j}$ by multiplication by $t^{\beta_j}$ and $\beta_j\in\Z$. Given $v\in V$, we may decompose $v=\sum_{j=1}^Jv^{\beta_j}$ uniquely, where $v^{\beta_j}\in V^{\beta_j}$, and define
\begin{equation}\label{eq-def-mu}
    \mu(v,\lambda):=-\min\{\beta_j\suhthat v^{\beta_j}\ne 0\}.
\end{equation}
\end{definition}

\begin{remark}\label{remark-properties-mu}
    It is straightforward to check that, given two representations $\trho_i:\GL(n,k)\to\GL(V_i)$, two vectors $v_i\in V_i$ ($i=1,2$) and a 1-parameter subgroup $\lambda:\Gm(k)\to\GL(n,k)$,
    \begin{equation*}
        \mu(v_1\otimes v_2,\lambda)=\mu(v_1,\lambda)+\mu(v_2,\lambda)\andd
        \mu(v_1\oplus v_2,\lambda)=\max\left(\mu(v_1,\lambda),\mu(v_2,\lambda)\right).
    \end{equation*}
\end{remark}

Fix a pseudo-equivariant pair $(E,\sigma,f)$ and let 
\begin{equation*}
    E_\bullet:\{0\}\subsetneq E_1\subsetneq\dots\subsetneq E_J\subsetneq E
\end{equation*}
be a filtration of $E$ and let $\alpha_\bullet=(\alpha_1,\dots,\alpha_J)\in\Q_{>0}^J$. We call the pair $(E_\bullet,\alb)$ a \textbf{weighted filtration} of $E$, and we call it \textbf{proper} if $E_\bullet$ is proper. Then we set
\begin{equation}
    M(\eb,\alb):=\sum_{j=1}^J\alpha_j(\deg(E)\rk(E_j)-\deg(E_j)\rk(E)).
\end{equation}


Set $\hhom_\Gamma(E):=\bigoplus_{\gamma\in\Gamma}\hhom(E,\gamma^*E)$. Let $\eta\in X$ be the generic point of $X$ when thought of as an algebraic curve. The restrictions $\mathbb E:=E\vert_\eta$, $\mathbb W:=E(W)\vert_\eta$ and $\mathbb H_\Gamma:=\hhom_\Gamma(E)\vert_\eta$ are vector spaces over the function field $\C(X)$ of $X$. The restriction of the filtration $\eb\vert_\eta$ is a filtration $\mathbb \eb$ of $\mathbb E$ by vector subspaces. Let $r\in\N$ be such that $r\alb\in \frac 1n\Z^J$.
Let $\lambda:\mathbb G_m(\C(X))\to \GL(\mathbb E)\cong\GL(n,\C(X))$ be a \textbf{1-parameter subgroup} whose weight space decomposition is $\mathbb E=\bigoplus_{j=1}^J\mathbb E^{r\alpha_j}$ and satisfies $\mathbb E_j=\bigoplus_{i=1}^{j}\mathbb E^{r\alpha_i}$ --- $\lambda$ is uniquely defined up to the conjugation action of the parabolic subgroup $\stab_{\GL(\mathbb E)}(\Eb)\le\GL(\mathbb E)$. Set $f^{*-1}:=\{\fg^{*-1}:E^*\to\gamma^*E^*\}_{\gamma\in\Gamma}$,
\begin{equation}\label{eq-def-mus}
    \mu(\eb,\alb,\sigma):=\frac 1{r^{\deg\rho}}\mu(\sigma\vert_\eta,\lambda)\andd \mu(\eb,\alb,f):=\frac 1{r}\max(\mu(\oplus f\vert_\eta,\lambda),\mu(\oplus f^{*-1}\vert_\eta,\lambda)),
\end{equation}
where $\sigma\vert_\eta\in\mathbb W$, and  $\oplus f\vert_\eta$ and $\oplus f^{*-1}\vert_\eta\in\mathbb H_\Gamma$ --- here we use the notation $\oplus f:=\bigoplus_{\gamma\in{\Gamma}}\fg$.
It is straightforward to check that (\ref{eq-def-mus}) does not depend on $r$.
By \cite[Proposition 1.5.1.35]{schmitt} $\mu$ is invariant by the $\stab_{\GL(n,\C(X))}(\Eb)\le\GL(n,\C(X))$-action on $\lambda$, therefore (\ref{eq-def-mus}) only depends on the data $(E_\bullet,\alb,\sigma,f)$.

\begin{definition}\label{def-delta,chi-semistable-pairs}
Given $\delta,\chi\in\Q_{>0}$, a pseudo-equivariant pair $(E,\sigma,f)$ is \textbf{$(\delta,\chi)$-(semi)stable} if, for every proper weighted filtration $(\eb,\alb)$ of $E$,
\begin{equation}\label{eq-delta,chi-semistable}
    M(\eb,\alb)+\delta\mu(\eb,\alb,\sigma)+\chi\mu(\eb,\alb,f)\geqp
0.
\end{equation}
\end{definition}

\begin{remark}\label{remark-pairs-vs-swamps}
    Given a pseudo-equivariant $\rho$-pair $(E,\sigma,f)$, the underlying pair $(E,\sigma)$ defines a $\rho$-swamp $(E,\oo_X,\sigma)$ in the sense of \cite{schmitt}. Moreover, the swamp $(E,\oo_X,\sigma)$ is $\delta$-(semi)stable, in the sense of \cite[Section 2.3.2]{schmitt}, if and only if $(E,\sigma,f)$ is $(\delta,0)$-(semi)stable.
\end{remark}

The following technical lemma will be useful when working with the notion of $(\delta,\chi)$-(semi)stability.

\begin{lemma}\label{lemma-delta,chi-semistable-finite-set}
    There exists a finite set
    \begin{equation*}
        S\subset \{(n_{\bullet},\alb)\suhthat n_j\in\N,\alpha_j\in\Q_{\ge 0},\sum_jn_j=n\},
    \end{equation*}
    independent of $\delta$ and $\chi$, such that such that, for any pseudo-equivariant pair $(E,\sigma,f)$, the following two conditions are equivalent.
    \begin{enumerate}
        \item $(E,\sigma,f)$ is $(\delta,\chi)$-(semi)stable.
        \item For any weighted filtration $(\eb,\alb)$ such that $(\rk \eb:=(\rk E_j)_j,\alb)\in S$, (\ref{eq-delta,chi-semistable}) holds, namely
        \begin{equation*}
            M(\eb,\alb)+\delta\mu(\eb,\alb,\sigma)+\chi\mu(\eb,\alb,f)\geqp
            0.
        \end{equation*}
    \end{enumerate}
\end{lemma}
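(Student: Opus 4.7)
The plan is to establish that, once we fix the rank sequence of $\eb$, the function
\[
\Phi(\alb):= M(\eb,\alb)+\delta\,\mu(\eb,\alb,\sigma)+\chi\,\mu(\eb,\alb,f)
\]
is piecewise linear and positively $1$-homogeneous in $\alb$, with a chamber decomposition controlled only by the rank sequence, independently of the pair $(E,\sigma,f)$, of $\eb$, and of the parameters $\delta,\chi$. Once this is established, verifying $\Phi\geqp 0$ reduces to a finite check at representatives of extremal rays.

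Since there are only finitely many possible rank sequences $n_\bullet$ for a proper filtration of a rank-$n$ bundle, it suffices to produce, for each $n_\bullet$, a finite set $S_{n_\bullet}\subset \Q_{\ge 0}^J$ of weight tuples and take $S=\bigcup_{n_\bullet}\{n_\bullet\}\times S_{n_\bullet}$. Fix $n_\bullet$. For any filtration $\eb$ of that rank type, the $1$-parameter subgroup $\lambda(\eb,\alb)$ acts on $\mathbb E$ with weight $r\alpha_j$ on $\mathbb E^{r\alpha_j}$; the induced actions on $\mathbb W$ and on $\mathbb H_\Gamma$ decompose these spaces into weight spaces whose weights form a fixed finite subset $\Lambda\subset(\Q^J)^*$ of $\Q$-linear functions of $\alb$, depending only on $n_\bullet$ and on the standard tensor/dual/Hom structure of $W$ and $\hhom_\Gamma$. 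The hyperplane arrangement $\{\beta=\beta'\colon \beta,\beta'\in\Lambda\}$ partitions $\Q_{\ge 0}^J$ into finitely many rational polyhedral chambers. On the closure $\bar C$ of each chamber, the linear ordering on $\Lambda$ is constant, so for any $(E,\sigma,f)$ and any $\eb$ with $\rk\eb=n_\bullet$, both $\mu(\eb,\cdot,\sigma)$ and $\mu(\eb,\cdot,f)$ agree on $\bar C$ with some linear function drawn from a fixed finite list --- which member depends on the supports of $\sigma|_\eta$, $\oplus f|_\eta$, $\oplus f^{*-1}|_\eta$, but the list itself does not. Since $M(\eb,\cdot)$ is linear, $\Phi$ is linear on $\bar C$.

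Let $S_{n_\bullet}$ be a choice of rational representatives of the extremal rays of these chambers; then $S$ is finite and independent of $\delta,\chi$. The implication (1)$\Rightarrow$(2) is immediate. For (2)$\Rightarrow$(1), take any weighted filtration $(\eb,\alb)$, set $n_\bullet=\rk\eb$, and let $\bar C$ be a closed chamber containing $\alb$, with extremal rays $\alb_1,\dots,\alb_m\in S_{n_\bullet}$. Then $\alb=\sum_k \lambda_k\alb_k$ for some $\lambda_k\in\Q_{\ge 0}$, and the linearity of $\Phi$ on $\bar C$ yields $\Phi(\eb,\alb)=\sum_k \lambda_k\,\Phi(\eb,\alb_k)\ge 0$ by hypothesis. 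In the stable case, strict positivity at each $\alb_k$ together with $\alb\ne 0$ (so at least one $\lambda_k>0$) yields strict positivity at $\alb$.

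The main technical obstacle is verifying the universality of $\Lambda$: although different filtrations $\eb$ with the same rank sequence produce different weight subspace decompositions of $\mathbb W$ and $\mathbb H_\Gamma$, the multiset of weight functions of $\alb$ depends only on $n_\bullet$. This follows from the combinatorics of weights of tensor, dual, and Hom powers applied to a flag of prescribed dimensions --- for $W=\Cabc$ the weights are explicit combinations of the $\alpha_j$, and similarly for $\hhom(E,\gamma^*E)$ one obtains differences --- and in both cases the resulting list depends only on $n_\bullet$ and the fixed integers $a,b,c,|\Gamma|$. A minor bookkeeping point is that extremal rays of chambers may have $\alpha_j=0$ for some $j$, which is why $S$ contains weights in $\Q_{\ge 0}^J$; such degenerate weight tuples correspond to collapsing the filtration to a shorter sub-filtration and are handled by the analogous case of a shorter rank sequence.
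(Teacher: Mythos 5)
Your proof is correct and is essentially the paper's argument in different coordinates: the paper conjugates the one-parameter subgroup into a fixed maximal torus of $\GL(n,\C(X))$ and uses the resulting fan decomposition of a Weyl chamber into rational polyhedral cones on which each $\mu(\cdot,\lambda)$ is linear, then reduces to the edges of the fan, which is exactly your hyperplane-arrangement chamber decomposition of $\Q_{\ge 0}^J$ for each fixed rank sequence followed by reduction to extremal rays. The universality of the weight list $\Lambda$ that you flag is handled in the paper implicitly by working with the fixed representations of $\GL(n,\C(X))$ on $\C(X)^n$, $W\otimes\C(X)$ and the $\Hom$-spaces, so there is no substantive difference in content.
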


\begin{remark}
    Lemma \ref{lemma-delta,chi-semistable-finite-set} may be summarized with the slogan \textit{$(\delta,\chi)$-semistability only has to be checked for weighted filtrations $(\eb,\alb)$ such that $(\rk \eb,\alb)\in S$.}
\end{remark}

\begin{proof}
    We follow the argument in \cite[Example 1.5.1.18]{schmitt}. Choose a maximal torus $T\subset\GL(n,\C(X))$ with Lie algebra $\lie t$ and a Weyl chamber $K\subset X(\lie t)$, where $X(\lie t)$ is the vector space of characters of $\lie t$. Given a representation $\rho:\GLX\to \GL(V)$, let $X(V)\subset X(\lie t)$ be the set of weights of $\rho$. Given a subset $R\subset X(V)$ and $\chi\in R$, set
    \begin{equation*}
        K(V,R,\chi):=\{k\in K\suhthat \langle k,\chi\rangle\le \langle k,\chi'\rangle\;\forall \chi'\in R\}\subset X(\lie t).
    \end{equation*}
    This is a rational polyhedral cone in $\lie t$, and 
    \begin{equation*}
        K=\bigcup_{\chi\in R}K(V,R,\chi)
    \end{equation*}
    is a fan decomposition of $K$.

    Let
    \begin{equation*}
        V_1:=\C(X)^n,\,V_2:=V\otimes \C(X)
    \end{equation*}
    and
    \begin{equation*}
        V_3:=\bigoplus_{\gamma\in\Gamma}
        \Hom(\C(X),\gamma^*\C(X))
        \oplus
        \Hom(\C(X)^*,\gamma^*\C(X)^*),
    \end{equation*}
    equipped with the represenations $\GLX\to\GL(V_i)$ induced by the standard representation of $\GLX$ --- of course $\gamma^*\C(X)^*\cong\gamma^*\C(X)\cong\C(X)$ for every $\gamma$, but we are trying to use suggestive notation here. For every choice of subsets $R_i\subset X(V_i)$, the expression
    \begin{equation*}
        K=\bigcup_{\chi_i\in R_i}\bigcap_{i=1,2,3}K(V_i,R_i,\chi_i)
    \end{equation*}
    is a fan decomposition of $K$. Let $\Lambda((V_i)_{i=1,2,3},(R_i)_{i=1,2,3},(\chi_i)_{i=1,2,3})$ be the set of edges of $\bigcap_{i=1,2,3}K(V_i,R_i,\chi_i)$, and let
    \begin{equation*}
        \Lambda:=\bigcup_{R_i\subset X(V_i),\chi_i\in R_i}\Lambda((V_i),(R_i),(\chi_i)).
    \end{equation*}
    
    \textbf{Claim:} Fix an isomorphism $X(\lie t)\cong X^*(\lie t)$ between the spaces of characters and cocharacters of $\lie t$. Then, for every $v_i\in V_i$, 
    \begin{equation}\label{eq-mu-delta-chi-wi}
        \mu(v_1,\lambda)+\delta\mu(v_2,\lambda)+\chi\mu(v_3,\lambda)\geqp0
    \end{equation}
    for every non-trivial 1-parameter $\lambda$ of $\GLX$, if and only if
    \begin{equation}\label{eq-mu-delta-chi-gwi}
        \mu(g\cdot v_1,\lambda)+\delta\mu(g\cdot v_2,\lambda)+\chi\mu(g\cdot v_3,\lambda)\geqp0
    \end{equation}
    for every $g\in\GLX$ and every $\lambda$ corresponding to an element of $\Lambda$.

    Since $\mu(g\cdot v_i,\lambda)=\mu(v_i,g^{-1}\lambda g)$, (\ref{eq-mu-delta-chi-wi}) implies \ref{eq-mu-delta-chi-gwi}. To prove the converse, assume (\ref{eq-mu-delta-chi-gwi}) and pick $v_i\in V_i$. Let $\lambda:\Gm(\C(X))\to\GLX$ be a non-trivial 1-parameter subgroup. There exists $g\in\GLX$ such that $g\lambda g^{-1}$ lands in $T$. We may associate to $g\lambda g^{-1}$ a character $k$ which, after conjugating $\lambda$ further, we may assume to be in $K$. Let $R_i\subset X(V_i)$ be the subset of weights appearing in the weight decomposition of $g\cdot v_i$. By (\ref{eq-def-mu}),
    \begin{equation*}
        \mu(v_i,\lambda)=\mu(g\cdot v_i,g\lambda g^{-1})=-\langle k,\chi_i\rangle,
    \end{equation*}
    where $\chi_i\in X(V_i)$ satisfies that $\langle k,\chi_i\rangle\le \langle k,\chi_i'\rangle$ for every $\chi_i'\in R_i$. In other words, $k\in K(V_i,R_i,\chi_i)$ for $i=1,2,3$, or $k\in \bigcap_iK(V_i,R_i,\chi_i)$. There is a decomposition
    \begin{equation*}
        k=\sum_{k'\in \Lambda((V_i)_i,(R_i)_i,(\chi_i)_i)}l_{k'}^kk',
    \end{equation*}
    for some positive numbers $l_{k'}^k$. 
    Then,
    \begin{equation*}
        \mu(v_i,\lambda)=-\sum_{k'\in \Lambda((V_i)_i,(R_i)_i,(\chi_i)_i)}l_{k'}^k\langle k',\chi_i\rangle.
    \end{equation*}
    Note that, if we regard $k'$ as a 1-parameter subgroup by abuse of notation, via the chosen isomorphism $X(\lie t)\cong X^*(\lie t)$, then $\mu(g\cdot v_i,k')=-\langle k',\chi_i\rangle$, since $k'\in K(V_i,R_i,\chi_i)$. Therefore,
    \begin{equation*}
        \mu(v_i,\lambda)=\sum_{k'\in \Lambda((V_i)_i,(R_i)_i,(\chi_i)_i)}l_{k'}^k\mu(g\cdot v_i,k'),
    \end{equation*}
    and so
    \begin{align*}
        \mu(v_1,\lambda)+\delta\mu(v_2,\lambda)+\chi\mu(v_3,\lambda)=\\
        \sum_{k'\in \Lambda((V_i)_i,(R_i)_i,(\chi_i)_i)}l_{k'}^k
        \left(
        \mu(g\cdot v_1,k')+\delta\mu(g\cdot v_2,k')+\chi\mu(g\cdot v_3,k')
        \right)\geqp0,
    \end{align*}
    where the last inequality follows from $\Lambda((V_i)_i,(R_i)_i,(\chi_i)_i)\subset \Lambda$ and $l_{k'}^k\ge 0$, as required.
    
    To finish the proof, we identify $K$ with $\R^n_{\ge 0}$, with basis $(e_1,\dots,e_n)$. For each $k\in \Lambda$, let $\beta_j^k\ge 0$ be such that
    \begin{equation*}
        k=\sum_j\beta^k_je_j.
    \end{equation*}
    Consider the set of tuples $\{(\beta^k_j)_j\}\subset \R^n_{\ge 0}$. There is a map
    \begin{equation*}
        \{(\beta^k_j)_j\}\to\{(n_{\bullet},\alb)\suhthat n_j\in\N,\alpha_j\in\Q_{\ge 0},\sum_jn_j=n\},
    \end{equation*}
    given by ordering each tuple $(\beta^k_j)_j$, partitioning the tuple into sets of equal numbers and letting $n_{\bullet}$ being the data of the partition and $\alb$ represent the value of $(\beta^k_j)$ on each block. Denoting the image by $S$, the lemma follows from the claim.
\end{proof}

\subsection{Pseudo-equivariant \texorpdfstring{$G$}{G}-bundles as pseudo-equivariant pairs}\label{section-pseudo-equivariant-bundles-as-pairs}

Let $(E,h,f)$ be a pseudo-equivariant $G$-bundle. The commutative diagramme
\begin{equation*}
    \begin{tikzcd}
    \iso(\oo_X^{\oplus n},E)\arrow{r}\arrow{d}&\hhom(\oo_X^{\oplus n},E)\arrow{d}\\
    \iso(\oo_X^{\oplus n},E)/G\arrow{r}&\hhom(\oo_X^{\oplus n},E)\sslash G,
    \end{tikzcd}
\end{equation*}
where the double slash denotes GIT quotient, implies that the total space of $\iso(\oo_X^{\oplus n},E)/G$ is an open subvariety of the total space of $\hhom(\oo_X^{\oplus n},E)\sslash G$. Indeed, since $G$ is semisimple, $\iota(G)\subset\SL(n,\C)$, hence the determinant morphism $\det:\hhom(\oo_X^{\oplus n},E)\to \oo_X$
is $G$-equivariant and descends to a morphism
\begin{equation*}
    \det:\hhom(\oo_X^{\oplus n},E)\sslash G\to \oo_X.
\end{equation*}
Moreover, $\iso(\oo_X^{\oplus n},E)/G=\det^{-1}(\oo_X^*)$, where $\oo_X^*\subset\oo_X$ denotes the subsheaf of functions with no zeros, and 
\begin{equation}\label{eq-iso-vs-det-1(0)}
    H^0(X,\iso(\oo_X^{\oplus n},E)/G)=H^0(X,\hhom(\oo_X^{\oplus n},E)\sslash G)\setminus \det{}^{-1}(0),
\end{equation}
where $0$ denotes the zero section of $\oo_X$.

On the other hand $\hhom(\oo_X^{\oplus n},E)\sslash G=\spec(\sym^*(\oo_X^{\oplus n}\otimes E^*)^G)$, where $\sym^*(\oo_X^{\oplus n}\otimes E^*)^G\subset\sym^*(\oo_X^{\oplus n}\otimes E^*)$ is the subsheaf of $G$-invariant sections, hence a global section of the sheaf $\hhom(\oo_X^{\oplus n},E)\sslash G$ is just a homomorphism of sheaves $\sym^*(\oo_X^{\oplus n}\otimes E^*)^G\to\oo_X$. Note that $\sym^*(\oo_X^{\oplus n}\otimes E^*)^G$ is finitely generated over $\oo_X$, therefore, by \cite[Chapter III, Section 8, Lemma]{mumford}, there exists a positive integer $d$ such that $\sym^{*d}(\oo_X^{\oplus n}\otimes E^*)^G$ is generated by its degree 1 part, that is $\sym^{d}(\oo_X^{\oplus n}\otimes E^*)^G$. Via the Veronese embedding, $\sym^{*}(\oo_X^{\oplus n}\otimes E^*)^G\cong \sym^{*d}(\oo_X^{\oplus n}\otimes E^*)^G$. Therefore, a homomorphism $h:\sym^*(\oo_X^{\oplus n}\otimes E^*)^G\to\oo_X$ is completely determined by a homomorphism
\begin{equation}\label{eq-h-as-hom-sym-to-OX}
    h:\sym^{d}(\oo_X^{\oplus n}\otimes E^*)^G\to\oo_X
\end{equation}
which we also denote by $h$ by abuse of notation.

Next we want to express $h$ as a homomorphism $E(W)\to\oo_X$, where $W$ satisfies the assumptions of Section \ref{section-pseudo-equivariant-bundles}. First we need a technical lemma. Given positive integers $a,b$ and $c$, set
\begin{equation*}
    \Cabc:=\left(\left(\C^n\right)^{\otimes a}\right)^{\oplus b}\otimes(\bigwedge^n\C^n)^{\otimes-c}.
\end{equation*}

\begin{lemma}\label{lemma-homogeneous-rep}    
    Let $\rho:\GL(n,\C)\times\GL(n,\C)\to\GL(W)$ be a representation that is homogeneous on the first factor, in the sense that the composition
    \begin{equation*}
        \rho_1:\GL(n,\C)\xrightarrow{\id\times 1}\GL(n,\C)\times\GL(n,\C)\xrightarrow{\rho}\GL(W)
    \end{equation*}
    is homogeneous. Then there exist $a,b$ and $c$ such that $W$ is a direct summand of $\Cabc$ and $\rho$ extends to a representation
    \begin{equation*}
        \trho:\GL(n,\C)\times\GL(n,\C)\to\GL(\Cabc)
    \end{equation*}
    such that the composition
    \begin{equation*}
        \trho_1:\GL(n,\C)\xrightarrow{\id\times 1}\GL(n,\C)\times\GL(n,\C)\xrightarrow{\trho}\GL(\Cabc)
    \end{equation*}
    is induced by the standard representation of $\GL(n,\C)$.
\end{lemma}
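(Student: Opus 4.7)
The plan is to decompose $W$ as a rational representation of the reductive group $\GL(n,\C)\times\GL(n,\C)$ and then embed the pieces into a suitable tensor power of the standard representation, twisted by a power of the determinant to compensate for the homogeneity of $\rho_1$.

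First I would write $W \cong \bigoplus_{j} U_j \boxtimes V_j$, where each $U_j$ and $V_j$ is an irreducible rational representation of $\GL(n,\C)$. The homogeneity hypothesis $\rho_1(z\id)=z^k\id$ forces the centre of the first factor to act on each $U_j$ by $z^k$; equivalently, the highest weight of $U_j$ sums to $k$. This is the key consequence of the hypothesis: a single integer $k$ controls the ``degree'' of every $U_j$. Since only finitely many isomorphism classes appear, I would next choose $c\ge 0$ large enough that each twist $U_j\otimes(\bigwedge^n\C^n)^{\otimes c}$ has non-negative highest weight, making it a polynomial representation of degree $a:=k+nc\ge 0$. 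Schur--Weyl duality then realises each such polynomial representation as a direct summand of $(\C^n)^{\otimes a}$, and hence each $U_j$ as a direct summand of $(\C^n)^{\otimes a}\otimes(\bigwedge^n\C^n)^{\otimes -c}$ as a first-factor representation.

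Then I would collect multiplicity spaces. Regrouping by first-factor type, $W \cong \bigoplus_{\lambda} U_\lambda \boxtimes W_\lambda$, where $\lambda$ ranges over distinct isomorphism classes of the $U_j$'s appearing and $W_\lambda$ is the corresponding multiplicity space, viewed as a representation of the second factor. The previous step embeds $W$ as a direct summand of $\bigl((\C^n)^{\otimes a}\otimes(\bigwedge^n\C^n)^{\otimes -c}\bigr)\otimes \widetilde V$, where $\widetilde V:=\bigoplus_\lambda W_\lambda$. Setting $b:=\dim\widetilde V$ and identifying $\C^b\cong\widetilde V$ as vector spaces (equipping $\C^b$ with the induced second-factor action and trivial first-factor action), I would conclude that the ambient space is isomorphic to $\Cabc$ as a $\GL(n,\C)\times\GL(n,\C)$-representation, with the first factor acting via the standard representation on $(\C^n)^{\otimes a}$ as required for $\trho_1$.

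There is no genuine obstacle --- the argument is essentially bookkeeping on top of Schur--Weyl duality --- but the one subtlety worth flagging is the simultaneous choice of a pair $(a,c)$ accommodating every first-factor irreducible at once. The homogeneity of $\rho_1$ is precisely what makes this uniform choice possible; without it, different irreducible components could carry different central characters, and no single $c$ would suffice to convert all of them into polynomial representations of a common degree.
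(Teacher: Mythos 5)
Your proposal is correct, but it follows a genuinely different route from the paper's proof. The paper argues via matrix coefficients in the spirit of Kraft--Procesi: after twisting by a power of the determinant it extends $\rho$ to a multiplicative map on all of $M_n(\C)\times M_n(\C)$, embeds $W$ equivariantly into $\C[M_n(\C)\times M_n(\C)]^{\oplus \dim W}$ via the evaluation maps $w\mapsto(A\mapsto\nu(\rho(A)w))$, identifies that coordinate ring with a sum of tensor powers of $(\C^n)^{\oplus n*}$, and uses homogeneity of $\rho_1$ only at the end to isolate a single degree; reductivity then turns the submodule into a direct summand. You instead invoke complete reducibility and the external-tensor-product description of irreducibles of $\GL(n,\C)\times\GL(n,\C)$, use the homogeneity hypothesis up front to see that every first-factor constituent carries the same central character $k$, twist by $(\bigwedge^n\C^n)^{\otimes c}$ to make them polynomial of a common degree $a=k+nc$, and apply Schur--Weyl duality, absorbing the second-factor action into the multiplicity space $\C^b$. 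Your version is more structural and makes the role of the homogeneity hypothesis transparent (it is exactly what allows a single pair $(a,c)$ to work for all constituents), at the cost of using the classification of rational irreducibles and Schur--Weyl duality; the paper's version is more hands-on and self-contained but leaves the use of homogeneity somewhat implicit. Both arguments produce the same ambient module $\Cabc$ with the first factor acting through the standard representation and the second factor commuting with it, which is what the subsequent construction of $\phi_{a,b,c}$ requires.
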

\begin{proof}
    This proof is modelled on \cite[proof of 5.3, Proposition]{kraft-representations}. Let $\M n$ be the ring of $(n\times n)$-matrices. Picking a basis of $W$, we may identify $\rho$ with a $(k\times k)$-matrix of regular functions in $\C[\GL(n,\C)\times \GL(n,\C)]=\C[\M n\times\M n][\pi_1\circ\det^{-1},\pi_2\circ\det^{-1}]$, where $k=\dim W$ and $\pi_i:\GLL\to\GL(n,\C)$ is the projection onto the $i$-th factor. 
    After replacing $\rho$ with $(\pi_1\circ\det)^{\otimes c}\otimes(\pi_2\circ\det)^{\otimes c}\otimes \rho$ for some $c$ big enough, we may assume that all these regular functions are in $\C[\MM n]$, so $\rho$ extends to a multiplicative map
    \begin{equation*}
        \rho:\MM n\to\M k.
    \end{equation*}
    The left $\GLL$-action on $\MM n$ given by $g*A:=A\cdot g^{-1}$ provides $\C[\MM n]$ with the $\GLL$-module structure
    \begin{equation*}
        (g* f)(A):=f(g^{-1}* A),
    \end{equation*}
    where $g\in\GLL$, $f\in\C[\MM n]$ and $A\in\MM n$. Given $\nu\in W^*$, define the linear map
    \begin{equation*}
        \phi_\nu:W\to\C[\MM n];\,w\mapsto\left(A\mapsto \nu(\rho(A)(w))\right).
    \end{equation*}
    It is straightforward to check that $\phi_\lambda$ is a map of $\GLL$-modules. Choosing a basis $\nu_1,\dots,\nu_k$ of $W^*$, we obtain an injective map of $\GLL$-modules
    \begin{equation}\label{eq-lemma-rep-W-in-C[]}
        \phi:W\hookrightarrow\C[\MM n]^{\oplus k};\,w\mapsto(\phi_{\nu_1}(w),\dots,\phi_{\nu_k}(w)).
    \end{equation}
    Now note that
    \begin{align*}
        \C[\MM n]
        &=
        \C[\M n]\otimes\C[\M n]
        \\&=
        \Sym^*\left(\M n^*\right)\otimes
        \Sym^*\left(\M n^*\right)
        \\&\cong
        \Sym^*\left((\C^n)^{\oplus n*}\right)\otimes
        \Sym^*\left((\C^n)^{\oplus n*}\right)
        \\&\subset 
        \bigoplus_{d_1,d_2\ge 0}
        \left((\C^n)^{\oplus n*}\right)^{\otimes d_1}\otimes
        \left((\C^n)^{\oplus n*}\right)^{\otimes d_2}
    \end{align*}
    as $\GLL$-modules. Combining this with (\ref{eq-lemma-rep-W-in-C[]}), and using the fact that $W$ has finite dimension, we find an injective map of $\GLL$-modules 
    \begin{equation*}
        W\hookrightarrow\left(\bigoplus_{d\ge d_1,d_2\ge 0}
        \left((\C^n)^{n*}\right)^{\otimes d_1}\otimes
        \left((\C^n)^{n*}\right)^{\otimes d_2}\right)^{\oplus k},
    \end{equation*}
    for a big enough positive integer $d$. Since $\rho_1$ is homogeneous, $W$ is the degree $d_1$ part for some $d_1$, i.e.
    \begin{equation*}
        W\hookrightarrow\left(\bigoplus_{d\ge d_2\ge 0}
        \left((\C^n)^{n*}\right)^{\otimes d_1}\otimes
        \left((\C^n)^{n*}\right)^{\otimes d_2}\right)^{\oplus k}.
    \end{equation*}
    Moreover, the composition 
    \begin{equation*}
        \GL(n,\C)\xrightarrow{\id\times 1}\GL(n,\C)\times\GL(n,\C)\to\GL
        \left(\left(\bigoplus_{d\ge d_2\ge 0}
        \left((\C^n)^{n*}\right)^{\otimes d_1}\otimes
        \left((\C^n)^{n*}\right)^{\otimes d_2}\right)^{\oplus k}
        \right)
    \end{equation*}
    is isomorphic to the representation
    \begin{equation*}
        \GL(n,\C)\to \GL\left(\left((\C^n)^{\otimes a}\right)^{\oplus b}\right)
    \end{equation*}
    induced by the standard representation, for some positive integers $a$ and $b$. The proof is finished if we note that $\GLL$-submodules and direct summands are the same, since $\GLL$ is reductive.
\end{proof}

Consider the left linear $\GLL$-action on $\C^n\otimes \C^{n*}$, given by
\begin{equation*}
    \GLL\times(\C^n\otimes \C^{n*})\to(\C^n\otimes \C^{n*});\, (g_1,g_2)\cdot(v,\nu)=(g_2\cdot v,g_1\cdot\nu),
\end{equation*}
where the $\GL(n,\C)$-actions on $\C^n$ and $\C^{n*}$ are the standard representation and the dual of the standard representation, respectively. This is homogeneous on both $\GL(n,\C)$-factors. It induces a representation
\begin{equation*}
    \rho:\GLL\to\GL\left(\sym^{d}(\C^n\otimes \C^{n*})^G\right)
\end{equation*} which is also homogeneous on both factors, in the sense that the compositions
\begin{equation*}
    \rho_1:\GL(n,\C)\xrightarrow{\id\times 1}\GL(n,\C)\times\GL(n,\C)\xrightarrow{\rho}\GL\left(\sym^{d}(\C^n\otimes \C^{n*})^G\right)
\end{equation*}
and
\begin{equation*}
    \rho_2:\GL(n,\C)\xrightarrow{1\times \id}\GL(n,\C)\times\GL(n,\C)\xrightarrow{\rho}\GL\left(\sym^{d}(\C^n\otimes \C^{n*})^G\right)
\end{equation*}
are both homogeneous.
By Lemma \ref{lemma-homogeneous-rep}, there exist integers $a,b$ and $c$ and a surjective homomorphism of $\GL(n,\C)$-representations
\begin{equation*}
    \phi_{a,b,c}:\C^n_{a,b,c}:=\left(\left(\C^n\right)^{\otimes a}\right)^{\oplus b}\otimes(\bigwedge^n\C^n)^{\otimes-c}\to \Sym^{d}(\C^n\otimes \C^{n*})^G,
\end{equation*}
where $\GL(n,\C)$ acts on $\Sym^{d}(\C^n\otimes \C^{n*})^G$ via $\rho_1$, and such that $\rho_2$ extends to a linear $\GL(n,\C)$-action on $\Cabc$ commuting with the $\GL(n,\C)$-action induced by the standard representation. Setting $W=\Cabc$, equipped with the representation $\rhoabc:\GL(n,\C)\to\GL(\Cabc)$ induced by the standard one and the right action
\begin{equation*}
    \Cabc\times\GL(n,\C)\to\Cabc;\,(v,g)\mapsto v\cdot g:= \rho_2(g)^{-1}v,
\end{equation*}
we have the necessary ingredients to make sense of Definition \ref{def-pseudo-equivariant-pair}.

The surjection $\phi_{a,b,c}$ induces a surjective homomorphism of sheaves
\begin{equation}\label{eq-sym-surjection}
    \left(E^{\otimes a}\right)^{\oplus b}\otimes(\det E)^{\otimes-c}\to \sym^{d}(\oo_X^{\oplus n}\otimes E^*)^,
\end{equation}
Therefore, the reduction of structure group $h$ is completely determined by the homomorphism
\begin{equation*}
    \sigma:E_{a,b,c}:=E(\Cabc)\cong \left(E^{\otimes a}\right)^{\oplus b}\otimes(\det E)^{\otimes-c}\xrightarrow{\phi_{a,b,c}}
    \sym^{d}(\oo_X^{\oplus n}\otimes E^*)^G\xrightarrow{h}\oo_X,
\end{equation*}
where $E(\Cabc)$ is the associated vector bundle via the representation $\rhoabc:\GL(n,\C)\to\GL(\Cabc)$ induced by the standard one. 
By (\ref{eq-iso-vs-det-1(0)}), $\sigma$ is not the zero section.
Moreover, since $(E,h,f)$ is a pseudo-equivariant $G$-bundle, (\ref{eq-pseudo-equivariant-f-h}) holds, which implies (\ref{eq-pseudo-equivariant-pairs-f-sigma}). Therefore, $(E,\sigma,f)$ is a pseudo-equivariant $\rhoabc$-pair. 

Summing up, the following holds.

\begin{proposition}\label{prop-pseudo-are-pairs}
There is an embedding of the category of pseudo-equivariant $G$-bundles into the category of pseudo-equivariant $\rhoabc$-pairs sending $(E,h,f)$ to $(E,\sigma,f)$, where
\begin{equation}\label{eq-def-sigma}
    \sigma:=h\circ\phi_{a,b,c}:E(\Cabc):=\iso(\oo_X^{\oplus n},E)\times_{\rhoabc}\Cabc\to \oo_X.
\end{equation}
\end{proposition}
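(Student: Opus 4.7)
The plan is to organize the construction carried out in the preceding discussion into a functor and verify that it is fully faithful. On objects, the assignment sends $(E,h,f)$ to $(E,\sigma,f)$ with $\sigma:=h\circ\phi_{a,b,c}$; the non-vanishing of $\sigma$ follows from (\ref{eq-iso-vs-det-1(0)}), since $h$ takes values in $\det^{-1}(\oo_X^*)$, and the compatibility of $\sigma$ with $f$ in the sense of (\ref{eq-pseudo-equivariant-pairs-f-sigma}) follows from (\ref{eq-pseudo-equivariant-f-h}) because $\phi_{a,b,c}$ is a morphism of $\GL(n,\C)$-representations.

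For morphisms, I send an isomorphism $\phi\colon(E,h,f)\xrightarrow{\sim}(E',h',f')$ to itself, viewed as an isomorphism of the underlying vector bundles. The compatibility $\phi(\sigma)=\sigma'$ is obtained by unravelling the associated-bundle construction: $\phi$ induces an isomorphism $E(\Cabc)\xrightarrow{\sim} E'(\Cabc)$ which intertwines $\phi_{a,b,c}$ with itself (by naturality with respect to the $\GL(n,\C)$-representation) and intertwines $h$ with $h'$ (by hypothesis), so it intertwines the compositions $\sigma=h\circ\phi_{a,b,c}$ and $\sigma'=h'\circ\phi_{a,b,c}$. Compatibility with the pseudo-equivariant actions $f,f'$ is preserved by definition, and faithfulness is immediate since the underlying vector-bundle isomorphism is unchanged.

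The main content is fullness: given an isomorphism of pairs $\phi\colon(E,\sigma,f)\xrightarrow{\sim}(E',\sigma',f')$ whose sections arise from reductions $h,h'$, I must show $\phi(h)=h'$ as sections of $\iso(\oo_X^{\oplus n},E')/G$. This is the step I expect to carry the only real content, and it follows from the fact that $\phi_{a,b,c}$ is surjective as a homomorphism of $\GL(n,\C)$-modules, hence induces a surjective sheaf homomorphism $E_{a,b,c}\to\sym^{d}(\oo_X^{\oplus n}\otimes E^*)^G$: the reduction $h$ may be characterised as the \emph{unique} factorisation of $\sigma$ through this surjection, as recorded at (\ref{eq-h-as-hom-sym-to-OX}), and similarly for $h'$. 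Since $\phi$ intertwines $\sigma$ with $\sigma'$ and is compatible with $\phi_{a,b,c}$ by naturality, uniqueness forces $\phi(h)=h'$; that both sides land in the open subvariety $\iso(\oo_X^{\oplus n},E')/G\subset \hhom(\oo_X^{\oplus n},E')\sslash G$ is automatic, since $h'$ already does.
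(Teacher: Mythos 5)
Your proposal is correct and follows essentially the same route as the paper, which states this proposition as a summary of the construction immediately preceding it (non-vanishing of $\sigma$ via (\ref{eq-iso-vs-det-1(0)}), and (\ref{eq-pseudo-equivariant-pairs-f-sigma}) from (\ref{eq-pseudo-equivariant-f-h}) via equivariance of $\phi_{a,b,c}$). Your explicit verification of full faithfulness --- recovering $h$ as the unique factorisation of $\sigma$ through the surjection $E_{a,b,c}\to\sym^{d}(\oo_X^{\oplus n}\otimes E^*)^G$ --- is exactly the point the paper leaves implicit in the word ``embedding,'' and it is the correct justification.
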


\begin{remark}
    Due to Remark \ref{remark-degree-0}, one may think that it is enough to consider $\rho_{a,b}$-pairs, rather than $\rhoabc$-pairs, i.e. set $c=0$. However, the isomorphism $\det E\cong \oo_X$ is not canonical, which is relevant when constructing the moduli space.
\end{remark}

\subsection{Semistable pseudo-equivariant \texorpdfstring{$G$}{G}-bundles as semistable pseudo-equivariant pairs}\label{section-semistable-pseudo-equivariant-as-semistable-pairs}

This section is devoted to the proof of the following result, which compares the stability notions for pseudo-equivariant $G$-bundles and pseudo-equivariant $\rhoabc$-pairs.

\begin{proposition}\label{prop-semistable-pseudo-vs-pairs}
For $\chi\gg\delta\gg0$, the following holds: given a pseudo-equivariant $G$-bundle $(E,h,f)$, the corresponding pseudo-equivariant pair $(E,\sigma,f)$ is $(\delta,\chi)$-(semi)stable if and only if $(E,h,f)$ is (semi)stable.
\end{proposition}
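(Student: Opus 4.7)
The plan is to combine the Schmitt correspondence between semistable $G$-bundles and $\delta$-semistable $\rho$-swamps \cite[Ch.\ 2]{schmitt} with an analysis of the $\Gamma$-equivariance term $\mu(\eb,\alb,f)$. By Proposition \ref{prop-semistability-underlying-bundle} combined with Proposition \ref{prop-equivalence-of-categories-twisted-pseudo}, the pseudo-equivariant $G$-bundle $(E,h,f)$ is (semi)stable if and only if the underlying $G$-bundle $(E,h)$ is so; via Remark \ref{remark-pairs-vs-swamps} and the swamp correspondence, for $\delta\gg0$ this is equivalent to $\delta$-(semi)stability of the swamp $(E,\oo_X,\sigma)$, i.e.\ to $M(\eb,\alb)+\delta\mu(\eb,\alb,\sigma)\geqp 0$ for every weighted filtration of $E$. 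The task therefore reduces to showing that, for $\chi\gg\delta\gg0$, swamp $\delta$-(semi)stability is equivalent to $(\delta,\chi)$-(semi)stability of the pair $(E,\sigma,f)$.

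The crucial analytic input is that $\mu(\eb,\alb,f)$ detects the failure of $(\eb,\alb)$ to be $\Gamma$-invariant: the claim is that $\mu(\eb,\alb,f)\ge 0$ for every weighted filtration $(\eb,\alb)$, with equality if and only if $(\eb,\alb)$ is $\Gamma$-invariant, meaning $\fg(E_j)=\gamma^*E_j$ for every $j$ and $\gamma\in\Gamma$. To verify this one decomposes $\fg|_\eta\in\Hom(\mathbb E,\gamma^*\mathbb E)$ and $\fg^{*-1}|_\eta\in\Hom(\mathbb E^*,\gamma^*\mathbb E^*)$ under the weight grading induced by the 1-parameter subgroup $\lambda$ associated to $(\eb,\alb)$; the isomorphism property forces a non-zero weight-zero block in at least one of the two, giving $\mu\ge 0$, and the parabolic invariance of $\mu$ from \cite[Prop.\ 1.5.1.35]{schmitt} then identifies the vanishing of the maximum with $\Gamma$-invariance of the filtration.

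Granted the key claim, one direction is immediate: if $(E,\oo_X,\sigma)$ is $\delta$-(semi)stable, adding the non-negative $\chi\mu(\eb,\alb,f)$ to $M(\eb,\alb)+\delta\mu(\eb,\alb,\sigma)\geqp 0$ preserves the inequality, giving $(\delta,\chi)$-(semi)stability of $(E,\sigma,f)$. Conversely, restricting (\ref{eq-delta,chi-semistable}) to $\Gamma$-invariant filtrations kills the $\chi$-term and yields the swamp inequality on such filtrations; by the swamp correspondence specialised to $\Gamma$-invariant $G$-reductions, combined with Proposition \ref{prop-semistability-underlying-bundle} (which identifies equivariant (semi)stability with (semi)stability of the underlying bundle), this is equivalent to the full swamp $\delta$-(semi)stability. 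To make $\chi$ and $\delta$ uniform we invoke Lemma \ref{lemma-delta,chi-semistable-finite-set} to restrict verification to a finite set $S$ of shapes $(\rk\eb,\alb)$, on which $\mu(\eb,\alb,f)$ attains a uniform strictly positive lower bound over non-$\Gamma$-invariant filtrations and $M(\eb,\alb)+\delta\mu(\eb,\alb,\sigma)$ is uniformly bounded below on bounded families of pseudo-equivariant $G$-bundles. The main obstacle is the sign and vanishing claim for $\mu(\eb,\alb,f)$, which requires carefully tracking the $\lambda$-action on $\gamma^*\mathbb E$ at the generic point and exploiting the symmetric roles of $f$ and $f^{*-1}$ together with Schmitt's parabolic invariance.
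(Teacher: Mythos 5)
Your overall architecture (reduce to Schmitt's swamp correspondence plus an analysis of $\mu(\eb,\alb,f)$, whose sign and vanishing behaviour you state and justify correctly --- this is Lemma \ref{lemma-mu-f-ge-0} in the paper) is close in spirit to the paper's, but the opening reduction contains a genuine error. Proposition \ref{prop-semistability-underlying-bundle} asserts the equivalence of equivariant and underlying \emph{(poly)semi}stability only; it does not cover stability, and a stable twisted equivariant $G$-bundle typically has an underlying $G$-bundle that is merely polystable, since equivariant stability is tested only against $\Gamma$-invariant reductions and $\Gamma$-invariant $s$. Consequently your claim that $(E,h,f)$ is (semi)stable iff the swamp $(E,\oo_X,\sigma)$ is $\delta$-(semi)stable fails in the stable case: the swamp need not be $\delta$-stable, so "adding the non-negative term $\chi\mu(\eb,\alb,f)$" cannot produce the strict inequality required for $(\delta,\chi)$-stability on non-invariant filtrations where $M(\eb,\alb)+\delta\mu(\eb,\alb,\sigma)=0$; there one genuinely needs $\chi\mu(\eb,\alb,f)>0$ to dominate, with $\chi$ chosen uniformly via the finite set of Lemma \ref{lemma-delta,chi-semistable-finite-set} and the integrality of the weights. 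Symmetrically, your converse claim that the $\Gamma$-invariant instances of (\ref{eq-delta,chi-semistable}) are "equivalent to the full swamp $\delta$-(semi)stability" is false for stability.

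The second, larger gap is the step you label "the swamp correspondence specialised to $\Gamma$-invariant $G$-reductions". No such off-the-shelf statement exists; proving it is the technical core of the paper's argument (Lemma \ref{lemma-semistable-iff-asymptotically-semistable}). One must show that a weighted filtration with $\mu(\eb,\alb,\sigma)=\mu(\eb,\alb,f)=0$ arises from a reduction of the $G$-bundle to a parabolic $P_s$ that can be taken $\Gamma$-invariant with $s\in i\lie k^\Gamma$. The paper does this by conjugating so that the antidominant character of $s$ lies in a $\Gamma$-invariant Weyl chamber, deducing that $P_s$ and $L_s$ are $\tg$-invariant, and replacing $s$ by the average $s_\Gamma=\sum_{\gamma\in\Gamma}\tg(s)$, for which $P_{s_\Gamma}=P_s$ and $\deg\bundle_G(\tau,s_\Gamma)=\vert\Gamma\vert\deg\bundle_G(\tau,s)$. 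To keep stability and semistability on an equal footing the paper routes everything through an intermediate notion, asymptotic (semi)stability (Definition \ref{def-asymptotic-(semi)stability}), rather than through $\delta$-(semi)stability of the underlying swamp. Your uniform-bound step is the right tool for passing from that intermediate notion to $(\delta,\chi)$-(semi)stability, but in your write-up it is deployed against the wrong intermediate statement.
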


We break the proof in several lemmas.

\begin{lemma}\label{lemma-mu-f-ge-0}
Fix a pseudo-equivariant $G$-bundle $(E,h,f)$ with associated pseudo-equivariant pair $(E,\sigma,f)$. Then $\mu(\eb,\alb,f)\ge 0$ for any weighted filtration, with $\mu(\eb,\alb,f)= 0$ if and only if the filtration $\eb$ is $f$-invariant.
\end{lemma}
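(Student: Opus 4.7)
The plan is to exploit the cocycle relation $f_1=\id_E$ from \eqref{eq-pseudo-equivariant-composition-f}, which makes the lemma essentially immediate once the relevant representations are unpacked.

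First I would identify, via the realisation of $\hhom(E,\gamma^*E)$ as an associated bundle of the frame bundle of $E$, the $\GL(n,\C(X))$-representation on $\hhom(E,\gamma^*E)\vert_\eta$ with the conjugation action on $M_n(\C(X))$. Under this identification $\id_{\mathbb E}$ has only diagonal, weight-zero components, so $\mu(\id,\lambda)=0$ for any 1-parameter subgroup $\lambda$ adapted to a weighted filtration. Since $f_1=\id$ appears as one of the summands of $\oplus f\vert_\eta\in \mathbb H_\Gamma$, Remark \ref{remark-properties-mu} (direct sum gives $\max$) yields
\[
\mu(\oplus f\vert_\eta,\lambda)=\max_{\gamma\in\Gamma}\mu(f_\gamma\vert_\eta,\lambda)\;\geq\;\mu(f_1\vert_\eta,\lambda)=0,
\]
and the analogous computation using $f_1^{*-1}=\id$ gives $\mu(\oplus f^{*-1}\vert_\eta,\lambda)\geq 0$. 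Dividing by $r$ yields $\mu(\eb,\alb,f)\geq 0$, which is the first half of the lemma.

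For the equality characterisation I would observe that $\mu(\eb,\alb,f)=0$ forces both terms in the defining $\max$ to vanish, so in particular $\mu(f_\gamma\vert_\eta,\lambda)\leq 0$ for every $\gamma$. Working in a basis of $\mathbb E$ adapted to the weight decomposition $\mathbb E=\bigoplus_j\mathbb E^{r\alpha_j}$, the matrix entry of $f_\gamma\vert_\eta$ in position $(k,l)$ transforms under conjugation with weight $r(\alpha_{j(k)}-\alpha_{j(l)})$, where $j(k)$ labels the weight block containing the $k$-th basis vector. The condition $\mu\leq 0$ forbids nonzero entries with strictly negative weight; this is precisely the statement that the matrix of $f_\gamma\vert_\eta$ is block-triangular with respect to the weight decomposition, equivalently $f_\gamma(\mathbb E_j)\subseteq \gamma^*\mathbb E_j$ for each $j$. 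Since $f_\gamma$ is an isomorphism and the subspaces have equal dimensions, the containment is an equality, so $\eb$ is preserved by every $f_\gamma$. Conversely, if each $f_\gamma$ preserves $\eb$ its matrix is block-triangular with invertible diagonal blocks, so a weight-zero entry on the diagonal is nonzero and $\mu(f_\gamma\vert_\eta,\lambda)=0$; a parallel computation (or duality) handles $f^{*-1}$, giving $\mu(\eb,\alb,f)=0$.

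The main obstacle I anticipate is pinning down the precise $\GL(\mathbb E)$-representation on $\hhom(E,\gamma^*E)\vert_\eta$ together with the sign conventions for $\alpha_\bullet$, so that block-triangularity corresponds to preservation of the filtration in the expected direction; once this bookkeeping is settled, the remainder is essentially the elementary observation that the identity endomorphism has zero weight under conjugation.
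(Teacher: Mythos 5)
Your argument is correct, and it reaches the conclusion by a slightly different mechanism than the paper, so a comparison is worthwhile. For the inequality $\mu(\eb,\alb,f)\ge 0$ the paper does not use $f_1=\id$ at all: it observes that the weights occurring in $\oplus f\vert_\eta$ and in $\oplus f^{*-1}\vert_\eta$ are exchanged up to sign, so the maximum of the two $\mu$'s is automatically non-negative. Your route through the summand $f_1=\id$ together with the $\max$-rule of Remark \ref{remark-properties-mu} is equally valid (and in fact each individual $\fg$ being injective already forces some component of non-positive weight to be non-zero, so the bound even holds summand by summand). Where your write-up is genuinely more careful is the equality case: the paper asserts that $\mu(\eb,\alb,f)=0$ holds if and only if \emph{all} weights of $f\vert_\eta$ vanish, which literally describes block-diagonal maps, i.e.\ maps preserving the grading rather than merely the filtration; the accurate statement, which is the one you prove, is that $\mu(\eb,\alb,f)=0$ if and only if no non-zero component of any $\fg\vert_\eta$ or $\fg^{*-1}\vert_\eta$ has negative weight, i.e.\ each $\fg$ is block-triangular, and your remark that the invertible diagonal blocks then contribute the weight $0$ needed for $\mu(\fg\vert_\eta,\lambda)=0$ supplies the converse. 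The one item you defer --- and rightly flag --- is the orientation convention: with the weights decreasing along $\eb$, ``no negative weights'' for $\fg$ is exactly $\fg(E_j)\subseteq\gamma^*E_j$, while the same condition for $\fg^{*-1}$ amounts to $\fg^{-1}(\gamma^*E_j)\subseteq E_j$, which is equivalent by a dimension count; with the opposite convention the two halves of the $\max$ simply trade roles. Once that bookkeeping is fixed your proof is complete, and if anything it records the equality criterion more precisely than the published argument.
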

\begin{proof}
Let $\lambda$ be a 1-parameter subgroup of $\GL(n,\C(X))$ determining $(\eb,\alb)$, where $\C(X)$ is the field of functions on $X$. It is clear that the weights appearing in the decomposition of $f\vert_\eta$ under the action of $\lambda$ are precisely the negative of the weights of $f^{*-1}\vert_\eta$. By Remark \ref{remark-properties-mu} and Definition (\ref{eq-def-mu}), $\mu(\eb,\alb,f)\ge 0$. Moreover, $\mu(\eb,\alb,f)= 0$ if and only if all the weights of $f\vert_\eta$ are $0$, which happens if and only if $\eb$ is $f$-invariant.
\end{proof}

Following \cite{schmitt}, we introduce a concept closely related to $(\delta,\chi)$-(semi)stability for pseudo-equivariant pairs.

\begin{definition}\label{def-asymptotic-(semi)stability}
A pseudo-equivariant pair $(E,\sigma,f)$ is called \textbf{asymptotically (semi)stable} if both $\mu(\eb,\alb,\sigma)$ and $\mu(\eb,\alb,f)$ are non-negative for every weighted filtration $(\eb,\alb)$ and, moreover, every proper weighted filtration such that $\mu(\eb,\alb,\sigma)=\mu(\eb,\alb,f)=0$ satisfies $M(\eb,\alb)\geqp 0$.
\end{definition}

\begin{lemma}\label{lemma-semistable-iff-asymptotically-semistable}
    A pseudo-equivariant $G$-bundle $(E,h,f)$ is (semi)stable if and only if the corresponding pseudo-equivariant pair $(E,\sigma,f)$ is asymptotically (semi)stable.
\end{lemma}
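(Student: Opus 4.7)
The plan is to build a dictionary between weighted filtrations $(\eb,\alb)$ of $E$ satisfying $\mu(\eb,\alb,\sigma)=\mu(\eb,\alb,f)=0$ and pairs $(\tau,s)$ with $s\in i\lie k^{\Gamma}$ and $\tau\in H^0(X,E(G/P_s))^{\Gamma}$, in such a way that $M(\eb,\alb)$ and $\deg E(\tau,s)$ agree up to a positive scalar. Once this dictionary is in place, both implications become formal translations, since $\zeta=0$ for $G$ semisimple, so Definition \ref{definition-semistable-twisted-equivariant} reads $\deg E(\tau,s)\geqp 0$ and Definition \ref{def-asymptotic-(semi)stability} reads $M(\eb,\alb)\geqp 0$.

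First, by Lemma \ref{lemma-mu-f-ge-0} the inequality $\mu(\eb,\alb,f)\ge 0$ holds for every weighted filtration, with equality iff $\eb$ is $f$-invariant. Next, because the section $\sigma$ in (\ref{eq-def-sigma}) is constructed from a reduction $h$ of structure group to $G\subset\SL(n,\C)$, the swamp theory of Schmitt applies: $h$ lifts $\sigma$ to a $G$-invariant point in the GIT quotient describing $\hhom(\oo_X^{\oplus n},E)\sslash G$, which forces $\mu(\eb,\alb,\sigma)\ge 0$ for every weighted filtration, with equality iff the 1-parameter subgroup $\lambda$ determining $(\eb,\alb)$ is conjugate into a parabolic subgroup of $G$ compatible with $h$. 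Thus $\mu(\eb,\alb,\sigma)=0$ is equivalent to $(\eb,\alb)$ arising from a reduction $\tau$ of the underlying $G$-bundle to some $P_s$ with $s\in i\lie k$. Combining both vanishings selects the $f$-invariant such reductions, which via Proposition \ref{prop-equivalence-of-categories-twisted-pseudo} and the embedding $\iota:G\times_{(\theta,c)}\Gamma\hookrightarrow\SL(n,\C)$ correspond to $\Gamma$-invariant reductions $\tau\in H^0(X,E(G/P_s))^{\Gamma}$ with $s\in i\lie k^{\Gamma}$.

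Under this correspondence, $M(\eb,\alb)$ is a positive multiple of $\deg E(\tau,s)$: both are computed as degrees of the associated graded pieces weighted by the eigenvalues of $\lambda$, matching the curvature integral in (\ref{eq-def-deg}) via Chern--Weil, as in the principal bundle case of Schmitt's book. With the dictionary in hand, the forward direction is immediate: if $(E,h,f)$ is (semi)stable, then every weighted filtration with $\mu(\eb,\alb,\sigma)=\mu(\eb,\alb,f)=0$ yields, via the dictionary, a $\Gamma$-invariant parabolic reduction to which the stability hypothesis applies, producing $M(\eb,\alb)\geqp 0$. The converse is symmetric: any $\Gamma$-invariant parabolic reduction $(\tau,s)$ with $s\in i\lie k^{\Gamma}$ produces, by the dictionary, an $f$-invariant weighted filtration with $\mu(\eb,\alb,\sigma)=0$, so asymptotic (semi)stability of $(E,\sigma,f)$ forces $M(\eb,\alb)\geqp 0$, i.e.\ $\deg E(\tau,s)\geqp 0$.

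The main obstacle lies in the middle step: showing that $\mu(\eb,\alb,\sigma)=0$ really does force the filtration to arise from a reduction of $h$ to a parabolic \emph{of $G$}, not merely of $\SL(n,\C)$. This requires exploiting the concrete form of $\sigma=h\circ\phi_{a,b,c}$ and Schmitt's identification of semistable $\rhoabc$-swamps coming from principal $G$-bundles with (semi)stable $G$-bundles, adapted to the representations constructed in Lemma \ref{lemma-homogeneous-rep}. A secondary, more routine, subtlety is matching $f$-invariance of $\eb$ with $\Gamma$-invariance of the reduction $\tau$ in the twisted equivariant sense; this follows by tracking the equivalence of Proposition \ref{prop-equivalence-of-categories-twisted-pseudo} through the level of 1-parameter subgroups and using that, along $f$-invariant filtrations, the action of $(1,\gamma)\in \iota(G\times_{(\theta,c)}\Gamma)$ preserves the graded pieces, so the induced action on $E(G/P_s)$ fixes $\tau$.
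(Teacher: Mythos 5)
Your overall architecture is the same as the paper's: translate between weighted filtrations with $\mu(\eb,\alb,\sigma)=\mu(\eb,\alb,f)=0$ and $\Gamma$-invariant parabolic reductions, and then read off both implications from the matching of $M(\eb,\alb)$ with $\deg E(\tau,s)$. The easy direction (asymptotic semistability implies semistability) and the identification of $\mu(\eb,\alb,\sigma)=0$ with a reduction of $h$ to a parabolic of $G$ (via Schmitt's swamp theory) are in line with the paper. However, there is a genuine gap in what you dismiss as the ``secondary, more routine'' subtlety: it is in fact the crux of the hard direction, and your sketch of it does not work as stated.

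The problem is that the semistability hypothesis for $(E,h,f)$ only quantifies over elements $s\in i\lie k^{\Gamma}$ and $\Gamma$-invariant reductions $\tau$, whereas the dictionary, as you set it up, only produces from $\mu(\eb,\alb,\sigma)=0$ some $s\in\lie g$ with $P=P_s$ and a reduction $\tau$; neither $s$ nor even the conjugacy behaviour of $P_s$ under $\tg$ is controlled at that point. Saying that ``$(1,\gamma)$ preserves the graded pieces, so the induced action on $E(G/P_s)$ fixes $\tau$'' presupposes that $\Gamma$ acts on $G/P_s$ at all, i.e.\ that $P_s$ is $\tg$-invariant; but $f$-invariance of $\eb$ only gives that $\widetilde P_s\cap G(1,\gamma)^{-1}\neq\emptyset$, hence elements $g_\gamma$ normalizing $P_s$ with $\Int_{g_\gamma}=\tg\Int_{g_{0,\gamma}}$ — so a priori $\tg$ preserves $P_s$ only up to an inner automorphism. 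The paper removes the inner part by conjugating so that $\Int_{g_\gamma}$ preserves a $\Gamma$-invariant maximal torus and the antidominant character $\chi_s$ lies in a $\Gamma$-invariant Weyl chamber, forcing the Weyl-group element of $\Int_{g_{0,\gamma}}$ to be trivial. Even after that, $s$ itself need not lie in $i\lie k^{\Gamma}$: one must replace it by the average $s_\Gamma:=\sum_{\gamma\in\Gamma}\tg(s)$, check $s_\Gamma\neq 0$ and $P_{s_\Gamma}=P_s$, and then show by a curvature computation (using $F_A=\gamma^*\tg^{-1}(F_A)$ and the $G$-invariance of the pairing) that $\deg E(\tau,s_\Gamma)=\vert\Gamma\vert\deg E(\tau,s)$, so that the hypothesis applied to $(\tau,s_\Gamma)$ yields the inequality for $(\tau,s)$, i.e.\ for $M(\eb,\alb)$. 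None of this is bookkeeping, and without it your ``forward direction is immediate'' step cannot invoke the semistability hypothesis at all. You should promote this from a remark to the main body of the proof.
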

\begin{proof}
First assume that $(E,\sigma,f)$ is asymptotically (semi)stable. Fix a $\Gamma$-invariant maximal compact subgroup of $G$ with Lie algebra $\lie k\subset \gl$. For every $s\in i\lie k^{\Gamma}$, we may define parabolic subgroups $\widetilde{P}_s\subset\GL(n,\C)$ and $P_s:=\widetilde{P}_s\cap G$, where 
\begin{equation}
    \widetilde{P}_s:=\{g\in \GL(n,\C)\suhthat e^{ts}ge^{-ts}\,\text{remains bounded as}\;t\to\infty\}.
\end{equation}
Let $\bundle:=\iso(\oo_X^{\oplus n},E)$ be the bundle of frames of $E$ --- a $\GL(n,\C)$-bundle ---, and let $\bundle_G$ be the $G$-bundle associated to $h$. Let $\tau\in H^0(X,\bundle_G/P_s)^{\Gamma}$ be a $\Gamma$-invariant reduction of structure group with corresponding $P_s$-principal bundle $\bundle_\tau$. We have to prove that $\deg \bundle_G(\tau,s)\geqp0$. 

The extension of structure group of $\bundle_\tau$ to $\widetilde{P}_s$ provides a reduction of structure group of the bundle of frames of $E$ to $\widetilde{P}_s$, which we denote by $\widetilde\tau\in H^0(X,\bundle/\widetilde{P}_s)$, with corresponding $\widetilde{P}_s$-bundle $\bundle_{\widetilde\tau}$. This determines a filtration $\eb$ of $E$, and the 1-parameter subgroup
\begin{equation*}
    \C^*\to\GL(n,\C);\,t\mapsto e^{ts}
\end{equation*}
determines a corresponding system of weights $\alb$, thus defining a weighted filtration $(\eb,\alb)$ such that $M(\eb,\alb)=\deg \bundle_G(\tau,s)$. By assumption, $M(\eb,\alb)\geqp 0$ as long as $\mu(\eb,\alb,\sigma)=\mu(\eb,\alb,f)=0$. 

Let $\eta$ be the generic point of $X$. Then there is an isomorphism $\bundle_G\vert_\eta\cong G\times_\C\C(X)$ sending $\bundle_\tau\vert_\eta$ to $P_s\times_\C\C(X)$, where $\C(X)$ is the field of functions of $X$. Under this isomorphism $\bundle_\tau\vert_\eta$ is determined by the one parameter subgroup generated by the exponential of $s\otimes 1\in \lie p_s\otimes \C(X)$. This extends to an isomorphism $\bundle\vert_\eta\cong \GL(n,\C(X))$ such that $\bundle_{\widetilde\tau}\vert_\eta$ is also determined by $s\otimes 1$. Hence, we may choose the 1-parameter subgroup $\lambda$ of $\GL(E\vert_\eta)$ determining $(\eb,\alb)$ to be generated by the exponential of $s\otimes 1$. The $\C(X)$-action defined by $\lambda$ preserves $\bundle_G\vert_\eta$ --- in other words, it preserves $h\vert_\eta$ and $\sigma\vert_\eta$, hence $\mu(\sigma,\lambda)=0$.

Finally, the fact that $\tau$ is $\Gamma$-invariant implies that $\widetilde\tau$ is $\Gamma$-invariant and so is $\lambda$, hence we conclude that $\mu(f\vert_\eta,\lambda)=0$ by Lemma \ref{lemma-mu-f-ge-0}. Therefore, $\mu(\eb,\alb,\sigma)=\mu(\eb,\alb,f)=0$ and so $\deg \bundle_G(\tau,s)=M(\eb,\alb)\geqp0$ as required. 

To prove the other direction, let $(E,h,f)$ be a (semi)stable pseudo-equivariant $G$-bundle. Let $(\eb,\alb)$ be a proper weighted filtration of $E$, assume without loss that $\alb$ is integral and let $\lambda:\mathbb G_m(\C(X))\to \GL(n,\C(X))$ be a 1-parameter subgroup determining $(\eb,\alb)$. By Proposition \ref{prop-semistability-underlying-bundle}, the underlying $G$-bundle $(E,h)$ is semistable. By \cite[Corollary 2.4.4.6]{schmitt}, for $\delta\gg 0$ the underlying $\rhoabc$-swamp $(E,\oo_X,\sigma)$ is $\delta$-semistable --- in the sense of \cite[Section 2.3.2]{schmitt}, see Remark \ref{remark-pairs-vs-swamps} ---, hence $\mu(\eb,\alb,\sigma)\ge 0$ for $\delta\gg0$ by \cite[Proposition 2.3.6.5]{schmitt}.  Moreover $\mu(\eb,\alb,f)\ge 0$ by Lemma \ref{lemma-mu-f-ge-0}, so the first condition in Definition \ref{def-asymptotic-(semi)stability} holds.

Now assume that $\mu(\eb,\alb,\sigma)=\mu(\eb,\alb,f)=0$, i.e. $\mu(\sigma\vert_\eta,\lambda)=\mu(f\vert_\eta,\lambda)=0$. Then $h\vert_\eta$ is invariant under the $\C^*$-action given by $\lambda$.
Therefore, $\lambda$ determines a parabolic subgroup $P$ of $G$ and a reduction of structure group $\tau\in H^0(X,\bundle_G/P)$ such that the reduction of structure group of $\bundle=\iso(\oo_X^{\oplus n},E)$ determined by $\eb$ is an extension of structure group of $\bundle_\tau$. By \cite[Proposition 2.4.9.1]{schmitt} we may also choose $s\in\lie g$ such that $P=P_s$ and the 1-parameter subgroup of $\GL(n,\C(X))$ generated by the exponential of $s\otimes 1$ determines the weighted filtration $(\eb,r\alb)$ for some $r\in\Q_{\ge 0}$, so that $\deg \bundle_G(\tau,s)=rM(\eb,\alb)$. Note that $s\ne 0$, since $\eb$ is a proper filtration of $E$. Therefore, since $G$ is semisimple, $P_s\subset G$ is a proper parabolic subgroup.


It is left to show that $\deg \bundle_G(\tau,s)\geqp0$. Recall from Section \ref{section-semistable-twisted-equivariant-G-bundles} that a $\Gamma$-invariant compact subgroup $K$ has been previously fixed in order to define the stability notions for twisted equivariant $G$-bundles --- or pseudo-equivariant $G$-bundles. We may also choose a $\Gamma$-invariant maximal torus $T\subset G$ such that $T\cap K$ is a maximal torus of $K$. First note that, given $g\in G$, we may use the $G$-action on $\bundle$ to define a reduction of structure group $\tau g\in H^0(X,\bundle/g^{-1}P_sg)$ to $g^{-1}P_sg$, with corresponding weighted filtration $(\eb g,\alb)$. The induced pseudo-equivariant action $g^{-1}fg$ satisfies $\mu(\eb g,\alb,g^{-1}fg)=\mu(\eb,\alb,f)=0$. Therefore, we may assume after conjugating by an element $g\in G$ that $s\in i\lie k$ and the corresponding antidominant character $\chi_s$ is in a $\Gamma$-invariant Weyl chamber. 

Let $\ttau\in H^0(X,\bundle/\widetilde{P}_s)$ be the reduction of structure group given by extending the structure group of $\bundle_\tau$ to $\widetilde{P}_s$. By the assumption $\mu(\eb,\alb,f)=0$ and Lemma \ref{lemma-mu-f-ge-0}, we know that $\bundle_{\tau'}$ is invariant by the equivariant action given by $f$, i.e. $\fg$ sends $\bundle_{\tau'}$ to $\gamma^*\bundle_{\tau'}\subset\gamma^*\bundle$. By (\ref{eq-pseudo-equivariant-f-h}) $\fg(\bundle_\tau)\subset \gamma^*\bundle_\tau(1,\gamma)^{-1}$, which implies that the intersection of $\widetilde{P}_s$ with $G(1,\gamma)^{-1}$ inside of $\GL(n,\C)$ is non-empty for each $\gamma\in\Gamma$. 
Therefore, the subgroup $\overline{P}_s:=\widetilde{P}_s\cap (G\times_{(\theta,c)}\Gamma)\subset G\times_{(\theta,c)}\Gamma\subset\GL(n,\C)$, which has connected component of the identity $P_s$, intersects all the connected components of $G\times_{(\theta,c)}\Gamma$. 
Since $P_s$ is a normal subgroup of $\overline{P}_s$, there exists $g_\gamma\in G(1,\gamma)$ --- namely, any element $g_\gamma\in \overline{P}_s\cap G(1,\gamma)$ --- such that $g_{\gamma}P_sg_{\gamma}^{-1}=P_s$, for each $\gamma\in\Gamma$. Let $L_s$ be the levi subgroup of $P_s$ defined by (\ref{eq-def-levi}), and let
\begin{equation}
    R_s:=\{g\in G\suhthat \lim_{t\to\infty} e^{ts}ge^{-ts}=1\},
\end{equation}
be the radical of $P_s$ --- that is, its maximal solvable normal subgroup. Since every two Levi subgroups of $P_s$ are conjugate we may assume, after conjugating $g_{\gamma}$ by an element of $P_s$, that $\Int_{g_{\gamma}}$ preserves $L_s$. Since every two maximal tori in $L_s$ are conjugate to each other, after conjugating $g_{\gamma}$ further by an element of $L_s$ we may assume that $\Int_{g_{\gamma}}$ preserves the maximal torus $T\cap L_s$. Note that $\Int_{g_{\gamma}}$ preserves $R_s$, and in particular its maximal torus $T\cap R_s$. As $T\cap R_s$ and $T\cap L_s$ generate $T$, we may assume that $\Int_{g_{\gamma}}$ preserves $T$. Since $\Int_{g_{\gamma}}$ and $\Int_{(1,\gamma)}=\tg$ are in the same outer class, we conclude that $\Int_{g_{\gamma}}=\tg\Int_{g_{0,\gamma}}$ for some $g_{0,\gamma}\in G$ preserving $T$.

Therefore, $\tg\Int_{g_{0,\gamma}}$ preserves $P_s$. As $\tg$ preserves the Weyl chamber where $\chi_s$ lies, the element of the Weyl group defined by $\Int_{g_{0,\gamma}}$ is trivial. In particular, the $\tg\Int_{g_{0,\gamma}}$-invariance of $T\cap L_s$ and $T\cap R_s$ implies that both $T\cap L_s$ and $T\cap R_s$ are $\tg$-invariant, which in turn implies that $L_s$ and $P_s$ are $\tg$-invariant for each $\gamma\in\Gamma$. Therefore $s_{\Gamma}:=\sum_{\gamma\in\Gamma}\tg(s)$, which is non-zero because sums of non-zero vectors in a Weyl chamber are never zero, is in $i\lie k^{\Gamma}$ and satisfies $P_{s_{\Gamma}}=P_s$. By (semi)stability of $(E,h,f)$ we know that $\deg \bundle_G(\tau,s_\Gamma)\geqp0$. Moreover, (\ref{eq-pseudo-equivariant-f-h}) implies $F_A=\gamma^*\tg^{-1}(F_A)$ for each $\gamma\in\Gamma$, where $F_A$ is the curvature of a connection $A$ on $K\cap L_s$. Therefore,
\begin{align*}
    \deg \bundle_G(\tau,s_\Gamma)&=\frac i{2\pi}\int_X\langle s_{\Gamma},F_A\rangle
    \\&=\frac i{2\pi}\sum_{\gamma\in\Gamma}\int_X\langle \tg(s),F_A\rangle
    \\&=
    \frac i{2\pi}\sum_{\gamma\in\Gamma}\int_X
    \langle \tg(s),\gamma^{*-1}\tg F_A\rangle
    \\&=
    \frac i{2\pi}\sum_{\gamma\in\Gamma}\int_X
    \langle \tg(s),\tg F_A\rangle
    \\&=
    \frac i{2\pi}\sum_{\gamma\in\Gamma}\int_X
    \langle s, F_A\rangle
    \\&=
    \vert\Gamma\vert \deg \bundle_G(\tau,s),
\end{align*}
where the equation before the last one follows by $G$-invariance of the pairing $\langle \cdot, \cdot\rangle$.
In particular $\deg \bundle_G(\tau,s)\geqp0$, as required.
\end{proof} 

\begin{lemma}\label{lemma-asymptotically-semistable-implies-delta,chi-semistable}
For $\chi\gg\delta\gg0$, a pseudo-equivariant pair $(E,\sigma,f)$ coming from a pseudo-equivariant $G$-bundle $(E,h,f)$ is asymptotically (semi)stable only if it is $(\delta,\chi)$-(semi)stable.
\end{lemma}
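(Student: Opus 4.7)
The plan is to combine the finite-type reduction of Lemma \ref{lemma-delta,chi-semistable-finite-set} with a three-way case analysis according to whether the invariants $\mu(\eb,\alb,\sigma)$ and $\mu(\eb,\alb,f)$ vanish, exploiting their discreteness and the boundedness of subsheaves of $E$.

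First I would invoke Lemma \ref{lemma-delta,chi-semistable-finite-set} to reduce the problem to checking (\ref{eq-delta,chi-semistable}) only over weighted filtrations whose numerical type $(\rk\eb,\alb)$ lies in a finite set $S$, independent of $\delta$ and $\chi$. Since $\mu(\eb,\alb,\sigma)$ and $\mu(\eb,\alb,f)$ depend only on the generic fibre, one may replace each $E_j$ by its saturation without changing them; this can only decrease $M(\eb,\alb)$, so it suffices to test (\ref{eq-delta,chi-semistable}) on saturated filtrations. For each numerical type in $S$ the invariants $\mu(\eb,\alb,\sigma)$ and $\mu(\eb,\alb,f)$ take values in a finite subset of $\Q_{\ge 0}$ --- the non-negativity coming from asymptotic (semi)stability and Lemma \ref{lemma-mu-f-ge-0} --- while standard slope-boundedness of subbundles of a given rank in the fixed vector bundle $E$ yields a constant $C\ge 0$ (depending on $E$ and on $S$) with $M(\eb,\alb)\ge -C$ for every saturated filtration whose type lies in $S$.

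With these bounds in hand the proof splits in three cases. If $\mu(\eb,\alb,\sigma)=\mu(\eb,\alb,f)=0$, asymptotic (semi)stability directly yields $M(\eb,\alb)\geqp 0$ and (\ref{eq-delta,chi-semistable}) follows. If $\mu(\eb,\alb,f)=0$ but $\mu(\eb,\alb,\sigma)>0$, bounding $\mu(\eb,\alb,\sigma)$ below by the minimum positive value $\epsilon_\sigma$ it attains and taking $\delta\ge C/\epsilon_\sigma$ gives $M+\delta\mu(\eb,\alb,\sigma)\geqp 0$. If $\mu(\eb,\alb,f)>0$, one analogously bounds it below by $\epsilon_f$ and uses $\delta\mu(\eb,\alb,\sigma)\ge 0$ so that the choice $\chi\ge C/\epsilon_f$ produces (\ref{eq-delta,chi-semistable}). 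Finiteness of $S$ lets one take the suprema of these thresholds over types to obtain $\delta_0,\chi_0$ that work; the hierarchy $\chi\gg\delta\gg 0$ is more than enough and absorbs any slack needed to pass from the semistable to the strict stable case.

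The main technical point I expect is the production of the bound $M(\eb,\alb)\ge -C$: for a single pair it is a direct consequence of boundedness of rank-$r$ subsheaves on a curve, but if one wants $\delta_0,\chi_0$ to depend only on the topological type of $(E,h,f)$ --- which is what the subsequent moduli construction relies on --- one must invoke boundedness of the family of (semi)stable pseudo-equivariant $G$-bundles of fixed topological type, which follows via Proposition \ref{prop-semistability-underlying-bundle} from the well-known boundedness of (semi)stable $G$-bundles.
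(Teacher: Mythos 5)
Your proposal is correct and follows essentially the same route as the paper: reduce to the finite set $S$ of numerical types via Lemma \ref{lemma-delta,chi-semistable-finite-set}, obtain a uniform lower bound on $M(\eb,\alb)$ from boundedness of the underlying semistable $G$-bundles (via Lemma \ref{lemma-semistable-iff-asymptotically-semistable} and Proposition \ref{prop-semistability-underlying-bundle}), note that positive values of $\mu(\eb,\alb,\sigma)$ and $\mu(\eb,\alb,f)$ are bounded below by a fixed positive rational (the paper phrases this as integrality of the weights in $S$, you as a minimum positive attained value), and conclude by the case analysis on which of the two $\mu$'s vanish. The saturation step and the closing remark on uniformity are harmless refinements of what the paper already does.
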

\begin{proof}
Assume that $(E,\sigma,f)$ is asymptotically (semi)stable. By Lemma \ref{lemma-semistable-iff-asymptotically-semistable}, the corresponding pseudo-equivariant $G$-bundle is (semi)stable, and so is the underlying $G$-bundle by Proposition \ref{prop-semistability-underlying-bundle}. By \cite[Corollary 2.4.4.6]{schmitt}, for $\delta\gg 0$ the corresponding $\rhoabc$-swamp $(E,\oo_X,\sigma)$ is $\delta$-semistable. By \cite[Theorem 2.3.4.1]{schmitt} and Remark \ref{remark-degree-0}, there exists a constant $C$, independent of $(E,\sigma,f)$, such that 
\begin{equation}\label{eq-bounded}
\deg(F)/\rk(F)\le C .   
\end{equation}

By Lemma \ref{lemma-delta,chi-semistable-finite-set}, $(\delta,\chi)$-(semi)stability only has to be verified on weighted filtrations $(\eb,\alb)$ such that $(\rk\eb,\alb)\in S$, for some finite set $S$. We may assume that $S$ only features integral weights. Using (\ref{eq-bounded}), we may find a negative number $-\epsilon_0$ such that $M(\eb,\alb)>-\epsilon_0$ for such weighted filtrations. Choose $\delta\ge \epsilon_0$. Using \cite[Lemma 1.5.1.41]{schmitt}, we may find a negative number $-\epsilon_1$ such that $M(\eb,\alb)+\delta\mu(\eb,\alb,\sigma)> -\epsilon_1$, for every weighted filtration $(\eb,\alb)$ such that $(\rk\eb,\alb)\in S$. Choose $\chi\ge\epsilon_1$. Then, for such weighted filtrations, since the weights are integral we find
\begin{equation*}
    M(\eb,\alb)+\delta\mu(\eb,\alb,\sigma)+\chi\mu(\eb,\alb,f)>0
\end{equation*}
whenever $\mu(\eb,\alb,f)$ or $\mu(\eb,\alb,\sigma)>0$. If, on the other hand, $\mu(\eb,\alb,f)=\mu(\eb,\alb,\sigma)=0$ --- this is the only remaining case, since $\mu(\eb,\alb,f),\mu(\eb,\alb,\sigma)\ge0$ by asymptotic (semi)stability ---,
then 
\begin{equation*}
    M(\eb,\alb)+\delta\mu(\eb,\alb,\sigma)+\chi\mu(\eb,\alb,f)=M(\eb,\alb)\geqp0
\end{equation*}
by Definition \ref{def-asymptotic-(semi)stability}, as required.
\end{proof}

\begin{lemma}\label{lemma-semistable-if-delta,chi-semistable}
    A pseudo-equivariant $G$-bundle $(E,h,f)$ is (semi)stable if the corresponding pseudo-equivariant pair $(E,\sigma,f)$ is $(\delta,\chi)$-(semi)stable.
\end{lemma}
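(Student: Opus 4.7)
The plan is to parallel the ``asymptotic (semi)stability implies (semi)stability'' direction of Lemma \ref{lemma-semistable-iff-asymptotically-semistable}. The defining inequality of $(\delta,\chi)$-(semi)stability reduces to the condition $M(\eb,\alb)\geqp 0$ of asymptotic (semi)stability precisely on those weighted filtrations with $\mu(\eb,\alb,\sigma)=\mu(\eb,\alb,f)=0$, and I will show that these are exactly the weighted filtrations produced by $\Gamma$-invariant reductions of $\bundle_G$ to parabolic subgroups of $G$, which is what the (semi)stability condition in Definition \ref{definition-semistable-twisted-equivariant} tests against.

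Concretely, fix $s\in i\lie k^\Gamma$ and a $\Gamma$-invariant reduction $\tau\in H^0(X,\bundle_G/P_s)^\Gamma$, where $\bundle_G$ denotes the $G$-bundle underlying $(E,h,f)$. Extending the structure group from $P_s$ to $\widetilde{P}_s\subset\GL(n,\C)$ produces $\widetilde\tau\in H^0(X,\bundle/\widetilde{P}_s)$, which determines a filtration $\eb$ of $E$; the 1-parameter subgroup $t\mapsto e^{ts}$ supplies weights $\alb$, and by the calculation already performed in the proof of Lemma \ref{lemma-semistable-iff-asymptotically-semistable} one has $M(\eb,\alb)=\deg\bundle_G(\tau,s)$.

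Next I would verify that $\mu(\eb,\alb,\sigma)$ and $\mu(\eb,\alb,f)$ both vanish on this weighted filtration. At the generic point $\eta$, the 1-parameter subgroup $\lambda$ of $\GL(n,\C(X))$ determined by $s$ factors through $G$ via the embedding $\iota$, hence preserves $h\vert_\eta$; since $\sigma=h\circ\phi_{a,b,c}$ by (\ref{eq-def-sigma}), $\lambda$ preserves $\sigma\vert_\eta$ and so $\mu(\eb,\alb,\sigma)=0$. For the $f$-term, the $\Gamma$-invariance of $\tau$ together with (\ref{eq-pseudo-equivariant-f-h}) and the $\Gamma$-invariance of $\widetilde{P}_s$ forces the filtration $\eb$ itself to be preserved by $\fg$ for every $\gamma\in\Gamma$, and then Lemma \ref{lemma-mu-f-ge-0} gives $\mu(\eb,\alb,f)=0$.

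Finally, applying $(\delta,\chi)$-(semi)stability to the weighted filtration $(\eb,\alb)$ yields
\[
M(\eb,\alb)+\delta\mu(\eb,\alb,\sigma)+\chi\mu(\eb,\alb,f)\;=\;M(\eb,\alb)\;=\;\deg\bundle_G(\tau,s)\;\geqp\;0,
\]
which is precisely the defining inequality of $\zeta=0$-(semi)stability from Definition \ref{definition-semistable-twisted-equivariant} (transported through Proposition \ref{prop-equivalence-of-categories-twisted-pseudo} and Definition \ref{def-semistable-pseudo-equivariant-bundle}). The main technical point is the verification that $\eb$ is $f$-invariant when $\tau$ is $\Gamma$-invariant, which is essentially the same compatibility computation that already appears in the first direction of the proof of Lemma \ref{lemma-semistable-iff-asymptotically-semistable}; notice that, in contrast with Lemma \ref{lemma-asymptotically-semistable-implies-delta,chi-semistable}, no asymptotic argument on $\delta$ or $\chi$ is needed in this direction.
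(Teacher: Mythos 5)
Your proposal is correct and follows essentially the same route as the paper: the paper's proof consists precisely of the observation that $(\delta,\chi)$-(semi)stability forces $M(\eb,\alb)\geqp 0$ on every weighted filtration with $\mu(\eb,\alb,\sigma)=\mu(\eb,\alb,f)=0$, after which it invokes verbatim the first half of the proof of Lemma \ref{lemma-semistable-iff-asymptotically-semistable}. You have simply unpacked that referenced argument (the construction of $(\eb,\alb)$ from $\tau$ and $s$, the identity $M(\eb,\alb)=\deg\bundle_G(\tau,s)$, and the vanishing of the two $\mu$-terms), and your remark that no asymptotic condition on $\delta,\chi$ is needed here is likewise consistent with the paper.
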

\begin{proof}
    Note that, given a weighted filtration $(\eb,\alb)$ such that 
    \begin{equation}\label{eq-mu-sigma-mu-f-=-0}
        \mu(\eb,\alb,\sigma)=\mu(\eb,\alb,f)=0,
    \end{equation}
    then
    \begin{equation*}
        M(\eb,\alb)+\delta\mu(\eb,\alb,\sigma)+\chi\mu(\eb,\alb,f)=M(\eb,\alb)
    \end{equation*} 
    for every $\delta$ and $\chi$. Therefore, if  $(E,\sigma,f)$ is $(\delta,\chi)$-(semi)stable, then 
    (\ref{eq-mu-sigma-mu-f-=-0}) implies that $M(\eb,\alb)\geqp0$. Using this fact, the proof is identical to the first part of the proof of Lemma \ref{lemma-semistable-iff-asymptotically-semistable}.
\end{proof}

Proposition \ref{prop-semistable-pseudo-vs-pairs} follows from Lemmas \ref{lemma-semistable-iff-asymptotically-semistable}, \ref{lemma-asymptotically-semistable-implies-delta,chi-semistable} and \ref{lemma-semistable-if-delta,chi-semistable}.

\section{The parameter space of twisted equivariant \texorpdfstring{$G$}{G}-bundles}\label{section-parameter-space}

Fix a compact Riemann surface $X$, a finite group $\Gamma$ of complex automorphisms of $X$, a connected semisimple complex Lie group $G$ with center $Z$, a homomorphism $\theta:\Gamma\to\Aut(G)$ and a 2-cocycle $c\in Z^2_{\theta}(\Gamma,Z)$. By Propositions \ref{prop-equivalence-of-categories-twisted-pseudo}, \ref{prop-pseudo-are-pairs} and \ref{prop-semistable-pseudo-vs-pairs}, if we construct the moduli space of $(\delta,\chi)$-semistable pseudo-equivariant $\rhoabc$-pairs then the moduli space of semistable $(\theta,c)$-twisted $\Gamma$-equivariant $G$-bundles will be embedded in it for $\chi\gg\delta\gg0$.

Throughout this section, given two spaces $Y$ and $Z$, we denote by $\pi_Y:Y\times Z\to Y$ the projection.

\subsection{Bounded families of vector bundles and the Quot Scheme}\label{section-bounded}

Recall that a family of vector bundles $\mathfrak G$ is \textbf{bounded} if there exists a scheme $S$ of finite type over $\C$ and a vector bundle $E_S$ over $S\times_{\C} X$ such that, for every vector bundle $E$ in $\mathfrak G$, there exists $s\in S$ such that $E_S\vert_{\{s\}\times X}\cong E$. 

Assume that $\chi\gg\delta\gg0$, so that we may use Proposition \ref{prop-semistable-pseudo-vs-pairs}.
By \cite[Proposition 4.1]{oscar-suratno} the family of semistable twisted equivariant $G$-bundles over $X$ has an underlying family of semistable $G$-bundles, whose associated family $\mathfrak G$ of vector bundles of rank $n$ and trivial determinant --- obtained via extension of structure group to $\GL(n,\C)$ --- is bounded by \cite[Corollary 2.4.4.6, Proposition 2.2.3.7 and Theorem 2.3.4.1]{schmitt}. Let $\oo_X(1)$ be a $\Gamma$-invariant ample line bundle on $X$; this can be achieved by taking any ample line bundle and tensoring it by its pullbacks with respect to the elements of $\Gamma$. Set $E(m):=E\otimes\oo_X(1)^{\otimes m}$, where $E$ is a vector space or a vector bundle over $X$. By \cite[Corollary 2.2.3.4]{schmitt} and Remark \ref{remark-degree-0}, we know that there exist a complex vector space $V$ and a natural number $m$ satisfying the following.

\begin{proposition}\label{prop-twisted-equivariant-are-quotients}
Every vector bundle $E$ of rank $n$ and trivial determinant associated to a semistable twisted equivariant $G$-bundle is a quotient $V(-m)\to E$ over $X$ such that the induced homomorphism $V\to H^0(E(m))$ on spaces of global sections is an isomorphism.
\end{proposition}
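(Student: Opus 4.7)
The plan is to reduce the statement to standard boundedness plus Serre vanishing. The key input is that the family in question is bounded; once this is known, uniform Castelnuovo--Mumford regularity gives a single twist $m$ that works for the whole family, and Riemann--Roch then produces a uniform dimension for $H^0(X,E(m))$.

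First I would make explicit the family under consideration. By Proposition \ref{prop-semistability-underlying-bundle}, if $(Q,\bullet)$ is a semistable $(\theta,c)$-twisted $\Gamma$-equivariant $G$-bundle then the underlying $G$-bundle is semistable; extending the structure group through $G\subset\SL(n,\C)\subset\GL(n,\C)$ produces a semistable vector bundle of rank $n$ with trivial determinant (Remark \ref{remark-degree-0}), hence of degree $0$. This is exactly the family $\mathfrak G$ named just before the statement, and by the already-cited bundle of results \cite[Corollary 2.4.4.6, Proposition 2.2.3.7, Theorem 2.3.4.1]{schmitt} together with \cite[Proposition 4.1]{oscar-suratno}, $\mathfrak G$ is bounded.

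Next, I would feed this boundedness into Serre vanishing in families: there exists an integer $m_0$ such that, for every $m\ge m_0$ and every $E\in\mathfrak G$,
\begin{equation*}
  E(m)\;\text{is globally generated}\andd H^i(X,E(m))=0\forevery i\ge 1.
\end{equation*}
Riemann--Roch on the curve $X$ gives
\begin{equation*}
  \dim H^0(X,E(m))=\deg(E(m))+n(1-g)=nm\deg\oo_X(1)+n(1-g),
\end{equation*}
using $\deg E=0$, so this dimension depends only on $m$ (and on the fixed invariants $n$, $g$, $\deg\oo_X(1)$), not on $E$. Fix any such $m\ge m_0$ and let $V$ be a complex vector space of this common dimension.

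Finally, for each $E\in\mathfrak G$ choose an isomorphism $V\xrightarrow{\sim} H^0(X,E(m))$; since $E(m)$ is globally generated, the evaluation map $V\otimes\oo_X\to E(m)$ is surjective, equivalently there is a surjection $V(-m)\to E$, and by construction the induced homomorphism $V\to H^0(X,E(m))$ is exactly the chosen isomorphism. The whole argument is essentially an appeal to \cite[Corollary 2.2.3.4]{schmitt}; the only non-formal ingredient is the boundedness of $\mathfrak G$, which is the real content and the reason semistability has been built into the hypothesis.
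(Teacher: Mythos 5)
Your argument is correct and follows essentially the same route as the paper: the statement is deduced from the boundedness of the family $\mathfrak G$ (via Proposition \ref{prop-semistability-underlying-bundle} and the cited results of Schmitt) together with \cite[Corollary 2.2.3.4]{schmitt}, whose content you have simply unpacked into uniform global generation, vanishing of higher cohomology, and Riemann--Roch. There is nothing missing.
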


There is a Grothendieck's \textbf{Quot Scheme} $\qq$ equipped with a family of vector bundles $\f$ over $\qq\times X$ of rank $n$ and trivial determinant over $X$, and a quotient homomorphism $q:\pi_X^*V(-m)\to \f$ of sheaves over $\qq\times X$. The quasiprojective variety $\qq$ is a fine moduli space for equivalence classes of vector bundles of rank $n$ and trivial determinant which are quotients of $V(-m)$ over $X$, where the equivalence relation is given by having the same kernel. By Proposition \ref{prop-twisted-equivariant-are-quotients}, for every $(\delta,\chi)$-semistable $\rhoabc$-pair $(E,\sigma,f)$ coming from a twisted equivariant $G$-bundle, there exists $t\in\qq$ such that $\f\vert_{\{t\}\times X}\cong E$.

\subsection{The parameter space of \texorpdfstring{$G$}{G}-bundles}\label{section-parameter-space-pairs}

In \cite{schmitt} the moduli space of holomorphic principal $G$-bundles is constructed using \textbf{$\rho$-swamps}, which are triples $(E,L,\phi)$ made of a vector bundle $E$, a line bundle $L$ and a surjection $\phi:E(W)\to L$, where $\rho: \GL(n,\C)\to W$ is some representation and $E(W)=\hhom(\oo_X^{\oplus n},E)\times_{\rho}W$ is the associated vector bundle. Indeed, if we ignore the pseudo-equivariant structure in Section \ref{section-pseudo-equivariant-bundles-as-pairs}, we obtain an embedding of the category of $G$-bundles into the category of \textbf{$\rhoabc$-pairs} $(E,\sigma)$ consisting of a vector bundle $E$ of rank $n$ and degree $0$, and a homomorphism $\sigma: E(\Cabc)\to\oo_X$ --- these are just $\Gamma$-pseudo-equivariant $\rhoabc$-pairs for $\Gamma=1$. In what follows we build a parameter space for this pairs, following \cite[Section 2.3]{schmitt}.

Given a vector bundle $E$, we write $E_{a,b,c}:=(E^{\otimes a})^{\oplus b}\otimes(\det E)^{\otimes -c}$. Let $k$ be big enough so that $V_k:=\pi_{\qq*}(\pi_X^*V(-m)_{a,b,c}\otimes \pi_X^*\oo_X(ak))=\pi_{\qq*}(\pi_X^*V_{a,b,c}\otimes \pi_X^*\oo_X(a(k-m)))$ and $\pi_{\qq*}\pi_X^*\oo_X(ak)$ are vector bundles, where $\pi_{\qq}:\qq\times X\to\qq$ is the projection. 
Let $\mathfrak B:=\PP\left(\hhom(V_k,\pi_{\qq*}\pi_X^*\oo_X(ak))\right)$, which comes equipped with a projective bundle projection $p_{\mathfrak B}:\mathfrak B\to\qq$. There is a tautological homomorphism 
$$(p_{\mathfrak B}\times \id_X)^*\pi_X^*V(-m)_{a,b,c}\otimes (p_{\mathfrak B}\times\id_X)^*\pi_X^*\oo_X(ak)\to (p_{\mathfrak B}\times\id_X)^*\pi_X^*\oo_X(ak)\otimes\oo_{\mathfrak B}(1)$$
or, in other words, a homomorphism
$$\phi':(p_{\mathfrak B}\times \id_X)^*\pi_X^*V(-m)_{a,b,c}\to \pi_{\mathfrak B}^*\oo_{\mathfrak B}(1).$$

Let $\f_{a,b,c}:=(\f^{\otimes a})^{\oplus b}\otimes(\det\f)^{\otimes -c}$ and consider the map
\begin{equation*}
    q_{a,b,c}:(p_{\mathfrak B}\times\id_X)^*\pi_X^*V(-m)_{a,b,c}\to (p_{\mathfrak B}\times\id_X)^*\f_{a,b,c}
\end{equation*}
induced by $q$. 

\begin{remark}
    Note that every $\rhoabc$-pair providing a semistable $G$-bundle is isomorphic to 
$$((p_{\mathfrak B}\times \id_X)^*\f_{a,b,c}\vert_{\{t\}\times X},\phi\vert_{\{t\}\times X})$$
for some $t\in\mathfrak B$. Indeed, for every vector bundle $E$ of rank $n$ and trivial determinant over $X$, given a quotient $V(-m)\to E$ inducing an isomorphism on global sections $V\xrightarrow{\sim}H^0(E(m))$, any homomorphism $\sigma:E_{a,b,c}\to\oo_X$ lifts to a homomorphism $V(-m)_{a,b,c}\to\oo_X$ via the composition
\begin{equation*}
    V_{a,b,c}\xrightarrow{\sim}H^0(V_{a,b,c}\otimes\oo_X)\xrightarrow{\sim}H^0(E(m)_{a,b,c})\xrightarrow{H^0(\sigma)}H^0(\oo_X(m)^{\otimes a})\xrightarrow{\text{ev}}\oo_X(ma),
\end{equation*}
which induces a homomorphism
\begin{equation*}
    V(-m)_{a,b,c}\cong V_{a,b,c}(-ma)\to\oo_X.
\end{equation*}
We may thus apply Proposition \ref{prop-twisted-equivariant-are-quotients}.
\end{remark}

By \cite[Proposition 2.3.5.1]{schmitt}, the set of points $t\in\mathfrak B$ such that the restriction of $\phi'\vert_{\{t\}\times X}$ to the kernel of $q_{a,b,c}$ is zero is a closed subscheme ${\ii}_1\subset\mathfrak B$. On this subscheme, $\phi'$ induces a homomorphism 
$$\phi:(p_{\mathfrak B}\times \id_X)^*\f_{a,b,c}\vert_{{\ii}_1}\to (p_{\mathfrak B}\times\id_X)^*\pi_X^*\oo_X\vert_{{\ii}_1}.$$

Again by Proposition 2.3.5.1, we find a closed subscheme ${\ii}_0$ of ${\ii}_1$ where the restriction of $\phi$ to the kernel of 
$$(p_{\mathfrak B}\times \id_X)^*\f_{a,b,c}\to \sym^*(\oo_X^{\oplus n}\otimes (p_{\mathfrak B}\times \id_X)^*\f^*)^G$$
is zero. 

Finally, there is an open subvariety ${\ii}$ of ${\ii}_0$ consisting of the points $t\in{\ii}_0$ such that $\phi\vert_{\{t\}\times X}$ induces a $G$-bundle, namely those inducing a section of $\hhom(\oo_X^{\oplus n},(p_{\mathfrak B}\times \id_X)^*\f)\sslash G$ which actually lies in $\iso(\oo_X^{\oplus n},(p_{\mathfrak B}\times \id_X)^*\f)/G$.

We rename $\phi$ to be its restriction to $\ii\times X$. Set $V_{\ii}:=V\otimes\oo_{\ii\times X}$, $\f_{\ii}:=(p_{\mathfrak B}\times \id_X)^*\f\vert_{{\ii}\times X}$ and $q_{\ii}:=(p_{\mathfrak B}\times \id_X)^*q\vert_{{\ii}\times X}$. We have obtained a universal family $(\f_{\ii},\phi)$ over ${\ii}\times X$ parametrizing $G$-bundles over $X$ whose extension of structure group to $\GL(n,\C)$ has associated vector bundle isomorphic to a quotient of $V(-m)$. Note that, for each $\gamma\in\Gamma$, there are alternative universal families $((\id_{\ii}\times\gamma)^*\f_{\ii},(\id_{\ii}\times\gamma)^*\phi)$ and a quotient 
\begin{equation*}
    (\id_{\ii}\times\gamma)^*q_{\ii}:(\id_{\ii}\times\gamma)^*V_{\ii}(-m)\to(\id_{\ii}\times\gamma)^*\f_{\ii}.
\end{equation*}

\begin{remark}
    Note that, given two $G$-bundles $(E,h)$ and $(E,h')$ represented by the same point $t\in\mathfrak B$, the corresponding homomorphisms
    $$\left(V^{\otimes a}\right)^{\oplus b}\otimes(\det V)^{\otimes -c}\to \oo_X$$
    --- given by Section \ref{section-pseudo-equivariant-bundles-as-pairs} and $q$ ---
    are related by multiplication by an element of $\C^*$.
    In particular, they are in the same $\GL(V)$-orbit. We will see in Proposition \ref{prop-iso-vs-orbit} that this implies that $(E,h)\cong(E,h')$. In other words, working with $\mathfrak B=\PP\left(\hhom(V_k,\pi_{\qq*}\pi_X^*\oo_X(ak))\right)$ instead of $\hhom(V_k,\pi_{\qq*}\pi_X^*\oo_X(ak))$ does not lose information.
\end{remark}

\subsection{The parameter space of twisted equivariant pairs}\label{section-parameter-space-pseudo-equivariant-pairs}

With notation as in Section \ref{section-parameter-space-pairs}, let
$$\Vg\ii:=\bigoplus_{\gamma\in\Gamma}\hhom(\pi_{\ii*}V_{\ii},\pi_{\ii*}(\id_{\ii}\times\gamma)^*V_{\ii})
$$ 
and
\begin{equation*}
    \Vg\ii^\vee:=
    \bigoplus_{\gamma\in\Gamma}\hhom(\pi_{\ii*}V^*_{\ii},\pi_{\ii*}(\id_{\ii}\times\gamma)^*V^*_{\ii})
\end{equation*}
Set $\cc:=\PP(\Vg\ii\oplus \Vg\ii^\vee)$, and let $p_{\cc}:\cc\to \ii$ be the bundle projection. We endow $\cc$ with the $\GL(V)$-left action induced by
\begin{equation*}
    \GL(V)\times(\Vg\ii\oplus \Vg\ii^\vee)\to \Vg\ii\oplus \Vg\ii^\vee;\,g\cdot(\bigoplus_{\gamma\in\Gamma}\psi'_{\gamma},\bigoplus_{\gamma\in\Gamma}\psi''_{\gamma}):=(\bigoplus_{\gamma\in\Gamma}g\psi'_{\gamma}g^{-1},\bigoplus_{\gamma\in\Gamma}g^{t-1}\psi''_{\gamma}g^t).
\end{equation*}
This comes equipped with a tautological section $\psi'\oplus\psi''\in H^0((p_{\cc}\times\id_X)^*(\Vg\ii\oplus \Vg\ii^\vee)\otimes \oo_{\cc}(1))$. Let $\psi'\in H^0((p_{\cc}\times\id_X)^*\Vg\ii\otimes \oo_{\cc}(1))$ be its projection onto the first summand, and write
\begin{equation*}
    \psi'=\bigoplus_{\gamma\in\Gamma}\psi'_{\gamma}:(p_{\cc}\times\id_X)^*V_{\ii}(-m)\to\bigoplus_{\gamma\in\Gamma}(p_{\cc}\times\id_X)^*(\id_{\ii}\times\gamma)^*V_{\ii}(-m)\otimes\oo_{\cc}(1).
\end{equation*}
Note that the operations of pulling back by $\id_{\ii}\times\gamma$ and tensoring by $\oo_X(-m)$ commute by $\Gamma$-invariance of $\oo_X(1)$, so we don't need to especify the order.

Let $\cc_1\subset\cc$ be the subset of points $t$ such that the restriction of $\psi'_{\gamma}$ to $\{t\}\times X$ is an isomorphism for every $\gamma\in\Gamma$. Since the degrees of a line bundle over $X$ and its pullback by an element of $\Gamma$ are equal, this is the complement of the locus of points $t\in\cc$ such that the induced homomorphisms 
\begin{equation*}
     \det\psi'_{\gamma}:\det(p_{\cc}\times\id_X)^*V_{\ii}(-m)\to \det(p_{\cc}\times\id_X)^*(\id_{\ii}\times\gamma)^*V_{\ii}(-m)\otimes\oo_{\cc}(1)
\end{equation*}
are zero on $\{t\}\times X$, for each $\gamma\in\Gamma$. By \cite[Proposition 2.3.5.1]{schmitt} this locus is closed, so $\cc_1$ is open in $\cc$. 

Let $\psi_0$ be the image of $\psi'$ by the homomorphism
$$(p_{\cc}\times\id_X)^*\Vg\ii\to (p_{\cc}\times\id_X)^*\bigoplus_{\gamma\in\Gamma}\hhom(\ker q_{\ii},(\id_{\ii}\times\gamma)^*\f_{\ii})$$
of vector bundles over ${\ii}\times X$ given by (the pullback of) restricting each homomorphism $V_{\ii}(-m)\to(\id_{\ii}\times\gamma)^*V_{\ii}(-m)$ to $\ker q_{\ii}$ and composing with $(\id_{\ii}\times\gamma)^*q_{\ii}$. The locus of $\mathfrak C$ where $\psi_0$ is zero is a closed subscheme $\mathfrak C_2$ of $\mathfrak C$ by \cite[Proposition 2.3.5.1]{schmitt}.
This is precisely the subvariety where $\psi'$ descends to a section $\psi$ of 
\begin{equation*}
    \f_{\Gamma,\cc}:=(p_{\cc}\times\id_X)^*\bigoplus_{\gamma\in\Gamma}\hhom(\f_{\ii},(\id_{\ii}\times\gamma)^*\f_{\ii}).
\end{equation*}
Again by \cite[Proposition 2.3.5.1]{schmitt}, the locus of $\mathfrak C$ where the image of $\psi'$ by the homomorphism
\begin{align*}
    p_{\cc}^*\Vg\ii\to p_{\cc}^*\bigoplus_{(\gamma,\gamma')\in\Gamma\times\Gamma}\hhom(V_{\ii},(\id_{\ii}\times\gamma\gamma')^*V_{\ii});\\
    p_{\cc}^*(f_{\gamma})_{\gamma\in\Gamma}\mapsto\bigoplus_{\gamma\in\Gamma} p_{\cc}^*((\gamma^*f_{\gamma'})f_{\gamma}-f_{\gamma\gamma'})
\end{align*}
is $0$ is a closed subscheme $\mathfrak C_3\subset\cc$. We also consider the closed subscheme $\mathfrak C_4$ where $\psi'_1=\id$. 

Set $\f_{a,b,c,\jj}:=(p_{\mathfrak B}\times\id_X)^*\f_{a,b,c}\vert_{\jj}$. Let $\cc_5\subset\cc_2$ be the closed subscheme where the image of $p_{\cc}^*\phi$ by the homomorphism
\begin{align*}
    \hhom((p_{\cc}\times\id_X)^*\f_{a,b,c,\jj},\oo_{\cc\times X})\to 
    \bigoplus_{\gamma\in\Gamma}\hhom((p_{\cc}\times\gamma)^*\f_{a,b,c,\jj},\oo_{\cc\times X});
    \\
    \Phi
    \mapsto 
    \bigoplus_{\gamma\in\Gamma}(\psi'_{\gamma}(\Phi)-(\id_{\cc}\times\gamma)^*\Phi\cdot(1,\gamma)^{-1})   
\end{align*}
is $0$ --- note that $\psi'_{\gamma}(\Phi)\in \hhom((p_{\cc}\times\gamma)^*\f_{a,b,c,\jj},\oo_{\cc\times X})$ because we are working on $\cc_2$.

Finally, let $\cc_6\subset\cc_1$ be the subset of points $t$ such that the image of $\psi'\oplus\psi''$ by the homomorphism
\begin{equation*}
    (p_{\cc}\times\id_X)^*(\Vg\ii\oplus \Vg\ii^\vee)\otimes \oo_{\cc}(1)\to \Vg\ii\otimes \oo_{\cc}(1);\,
    \psi'\oplus\psi''\mapsto \psi''-\psi'^{*-1},
\end{equation*}
where 
\begin{equation}\label{eq-*-1}
\psi'^{*-1}=\bigoplus_{\gamma\in\Gamma}\psi'^{*-1}_{\gamma},    
\end{equation}
is $0$ --- note that we are using the notation $\psi'=\bigoplus_{\gamma\in\Gamma}\psi'_{\gamma}$. By \cite[Proposition 2.3.5.1]{schmitt}, this is a closed subscheme of $\cc_1$, so it is locally closed in $\cc$.

\begin{remark}
    To obtain the parameter space of twisted equivariant $G$-bundle, we restrict below to a closed subvariety $\cc_0\subset\cc_6$. From this point of view we can now see one of the reasons why we work with $\PP(\Vg\ii\oplus V^\vee_{\Gamma})$, rather than $\PP(\Vg\ii)$. Indeed, a point in $\PP(\Vg\ii)$ only determines a set of homomorphisms $V\otimes\oo_X\to\gamma^*V\otimes\oo_X$ up to rescaling by an element of $\C^*$ --- in other words, up to a choice of local trivialization of $\oo_{\cc}(1)$. However, given a point in $\cc_6\subset \PP(\Vg\ii\oplus V^\vee_{\Gamma})$, these homomorphisms are uniquely defined by the condition that $\psi''$ is the ``dual inverse'' of $\psi'$.
\end{remark}

Note that every vector bundle $E$ of rank $n$ and trivial determinant with a reduction of structure group to $G$ making it a semistable $G$-bundle, equipped with a set of homomorphisms $E\to\gamma^*E$ for $\gamma\in\Gamma$, is represented by a point in $\cc$, since every such homomorphism lifts to a homomorphism $V\to\gamma^*V$ via the isomorphism $V\cong H^0(E(m))$ between the spaces of global sections. More precisely, any such object is represented by a point in $\cc_2$. This implies, by Proposition \ref{prop-semistability-underlying-bundle}, that every semistable pseudo-equivariant $G$-bundle is represented by a point in $\cc_2$. Conversely, every point in $\cc_2$ represents a --- not necessarily semistable --- vector bundle $E$ of rank $n$ and trivial determinant with a reduction of structure group to $G$, equipped with a set of homomorphisms $E\to\gamma^*E$ for $\gamma\in\Gamma$. Inside $\cc_1\cap\mathfrak C_2$, all the homomorphisms $E\to\gamma^*E$ are isomorphisms. Moreover, points in $\cc_1\cap\mathfrak C_2\cap\cc_3\cap\cc_4$ and $\cc_1\cap\mathfrak C_2\cap\cc_5$ satisfy (\ref{eq-pseudo-equivariant-composition-f}) and (\ref{eq-pseudo-equivariant-f-h}), respectively. Therefore, every point in $\cc_1\cap\mathfrak C_2\cap\cc_3\cap\cc_4\cap\cc_5$ represents a pseudo-equivariant $G$-bundle, and any semistable pseudo-equivariant $G$-bundle is represented by a point in $\cc_1\cap\mathfrak C_2\cap\cc_3\cap\cc_4\cap\cc_5$. Via Proposition \ref{prop-equivalence-of-categories-twisted-pseudo}, the same statement is true after replacing pseudo-equivariant $G$-bundles with twisted equivariant $G$-bundles. Moreover, if we restrict to $\cc_1\cap\mathfrak C_2\cap\cc_3\cap\cc_4\cap\cc_5\cap\cc_6$, then all the semistable twisted equivariant $G$-bundles are still represented, and each set of isomorphisms $E
\to\gamma^*E
$ only appears once for each $G$-bundle. Set $\cc_0:=\cc_1\cap\cc_2\cap\cc_3\cap\cc_4\cap\cc_5\cap\cc_6$. 

Let $\f_{\cc}:=(p_{\cc}\times\id_X)^*\f_{\ii}$, $\f_{\cc_0}:=(p_{\cc}\times\id_X)^*\f_{\ii}\vert_{\cc_0\times X}$ and $\phi_{\cc_0}:=(p_{\cc}\times\id_X)^*\phi\vert_{\cc_0\times X}\in \Hom((\f_{\cc_0})_{a,b,c},\oo_{\cc\times X})$. Denote by
\begin{equation*}
    \psi_{\cc_0}\in H^0\left(\bigoplus_{\gamma\in\Gamma}\hhom(\f_{\cc_0},(\id_{\cc_0}\times\gamma)^*\f_{\cc_0})\right)
\end{equation*}
the section induced by $\psi'$. Then $(\f_{\cc_0},\phi_{\cc_0},\psi_{\cc_0})$ is a universal family of pseudo-equivariant pairs over $\cc_0\times X$. Let $V_{\cc}:=V\otimes\oo_{\cc\times X}$, which is equipped with a surjective homomorphism $q_{\cc}:=(p_{\cc}\times\id_X)^*q_{\ii}:V_{\cc}(-m)\to \f_{\cc}$.

The next proposition reduces the problem of constructing the moduli space of twisted equivariant $G$-bundles to the study of a GIT quotient $\cc_0\sslash\GL(V)$.

\begin{proposition}\label{prop-iso-vs-orbit}
The natural action of $\GL(V)$ on $V$ induces an action on $\cc$ preserving $\cc_0\subset \cc$. For every $t$ and $s\in\cc_0$, the  pseudo-equivariant $\rhoabc$-pairs $(\f_{\cc_0},\phi_{\cc_0},\psi_{\cc_0})\vert_{\{t\}\times X}$ and $(\f_{\cc_0},\phi_{\cc_0},\psi_{\cc_0})\vert_{\{s\}\times X}$ are isomorphic if and only if they are in the same $\GL(V)$-orbit.
\end{proposition}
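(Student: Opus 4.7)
The plan is to first construct the $\GL(V)$-action on $\cc$ by lifting it through the tower of constructions, verify that it preserves each of $\cc_1,\dots,\cc_6$, and then prove the two directions of the orbit--isomorphism equivalence. The natural left action of $\GL(V)$ on $V$ extends to $V(-m)$ and acts on $\qq$ by post-composition on quotient maps $V(-m)\twoheadrightarrow \f$. Since $V_k$ and $\pi_{\qq*}\pi_X^*\oo_X(ak)$ inherit compatible actions (trivial on the second factor), this lifts to the projective bundle $\mathfrak B$ and to $\ii_1\supset\ii_0\supset\ii$, whose defining conditions (vanishing on kernels, $G$-equivariance, landing in $\iso/G$) are all $\GL(V)$-equivariant. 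The induced action on $\Vg\ii\oplus\Vg\ii^\vee$ is the conjugation formula already given in the text, which descends to $\cc$. Invariance of each $\cc_i$ is then routine: $\psi'_\gamma$ being an isomorphism, the composition identity $(\gamma^*\psi'_{\gamma'})\psi'_\gamma=\psi'_{\gamma\gamma'}$, the normalization $\psi'_1=\id$, the compatibility with $\phi_{\cc_0}$, and the duality $\psi''=\psi'^{*-1}$ are all preserved --- the last precisely because the second summand was set up to transform by the dual-inverse action $g^{t-1}(\cdot)g^t$.

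For the ``if'' direction, if $s=g\cdot t$ for some $g\in\GL(V)$, the action on $\qq$ sends the quotient class of $p_\cc(t)$ to that of $p_\cc(s)$, which gives a vector-bundle isomorphism $\tphi:\f_\cc|_{\{t\}\times X}\xrightarrow{\sim}\f_\cc|_{\{s\}\times X}$. By construction of the $\mathfrak B$-coordinate, $\tphi$ intertwines $\phi_{\cc_0}|_{\{t\}\times X}$ and $\phi_{\cc_0}|_{\{s\}\times X}$, and the conjugation rule $g\psi'_\gamma g^{-1}$ on the $\Vg\ii$-factor shows it intertwines $\psi_{\cc_0}|_{\{t\}\times X}$ and $\psi_{\cc_0}|_{\{s\}\times X}$, yielding the required isomorphism of pseudo-equivariant $\rhoabc$-pairs.

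For the converse, given an isomorphism $\tphi$ of the triples at $t$ and $s$, twist by $\oo_X(m)$ and take global sections. By construction of $\qq$ and Proposition \ref{prop-twisted-equivariant-are-quotients}, there are canonical isomorphisms $V\cong H^0(\f_\cc|_{\{t\}\times X}(m))$ and $V\cong H^0(\f_\cc|_{\{s\}\times X}(m))$, so $g:=H^0(\tphi(m))$ defines an element of $\GL(V)$. Compatibility of $\tphi$ with the quotient maps $q_\cc$ shows that $g\cdot p_\cc(t)=p_\cc(s)$ in $\qq$, and agreement of the $\mathfrak B$-coordinates follows from $\tphi(\phi_{\cc_0}|_{\{t\}\times X})=\phi_{\cc_0}|_{\{s\}\times X}$. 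Finally, the pseudo-equivariance relation $(\id\times\gamma)^*\tphi\circ\psi'_\gamma(t)=\psi'_\gamma(s)\circ\tphi$, read through the identification with $V$, translates into the equation $\psi'_\gamma(s)=g\psi'_\gamma(t)g^{-1}$ at the level of homogeneous coordinates of $\cc$.

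The main obstacle, and the principal subtlety of the argument, is that points of $\cc=\PP(\Vg\ii\oplus\Vg\ii^\vee)$ are only determined up to a global nonzero scalar, so a priori the identity $\psi'_\gamma(s)=g\psi'_\gamma(t)g^{-1}$ and its dual counterpart for $\psi''$ hold only projectively. The normalization $\psi'_1=\id$ defining $\cc_4$ and the duality $\psi''=\psi'^{*-1}$ defining $\cc_6$ were introduced precisely to eliminate this scalar ambiguity: together they single out a canonical representative of the homogeneous coordinates at each point of $\cc_0$, which makes the projective equation rigid and forces $s=g\cdot t$ on the nose in $\cc_0$, completing the proof.
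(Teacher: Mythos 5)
Your proposal is correct and follows essentially the same route as the paper: the action is lifted through the tower of constructions, the forward direction uses the induced isomorphism of quotients of $V(-m)$, and the converse recovers $g\in\GL(V)$ from $H^0$ of the isomorphism twisted by $\oo_X(m)$ via the identification $V\cong H^0(E(m))$. Your discussion of the projective scalar ambiguity and its resolution by the conditions defining $\cc_4$ and $\cc_6$ is a welcome elaboration of a point the paper only addresses in a remark preceding the proposition.
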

\begin{remark}
    By Propositions \ref{prop-equivalence-of-categories-twisted-pseudo} and \ref{prop-pseudo-are-pairs}, two twisted equivariant $G$-bundles are isomorphic if and only if the corresponding pseudo-equivariant pairs are, so Proposition \ref{prop-iso-vs-orbit} implies that two twisted equivariant $G$-bundles are isomorphic if and only if the corresponding points of $\cc_0$ are in the same $\GL(V)$-orbit.
\end{remark}
\begin{proof}
The $\GL(V)$-invariance of $\cc_0$ follows from its definition.
Given $t\in\cc_0$, consider the quotient $q_{\cc}\vert_{\{t\}\times X}:V_{\cc}(-m)\vert_{\{t\}\times X}\to\f_{\cc}\vert_{\{t\}\times X}$. An element $g\in\GL(V)$ sends $\ker q_{\cc}\vert_{\{t\}\times X}\subset V(-m)$, which is a subbundle of rank $\dim V-n$, to another subbundle of $V(-m)$, and the quotient is equal to $q_{\cc}\vert_{\{s\}\times X}:V_{\cc}(-m)\vert_{\{s\}\times X}\to\f_{\cc}\vert_{\{s\}\times X}$ for some $s\in\cc$. Moreover, the automorphism of $V_{\cc}$ determined by $g$ induces an isomorphism of vector bundles $\f_{\cc_0}\vert_{\{t\}\times X}\xrightarrow{\sim}\f_{\cc_0}\vert_{\{s\}\times X}$. Via this isomorphism, the pseudo-equivariant pair $(\f_{\cc_0},\phi_{\cc_0},\psi_{\cc_0})\vert_{\{t\}\times X}$ determines another one $(\f_{\cc_0},\phi_{\cc_0},\psi_{\cc_0})\vert_{\{s\}\times X}$, which determines $s$ uniquely.

It is left to prove that, given $t$ and $s\in\cc$ such that the corresponding pseudo-equivariant pairs $(\f_{\cc_0},\phi_{\cc_0},\psi_{\cc_0})\vert_{\{t\}\times X}$ and 
$(\f_{\cc_0},\phi_{\cc_0},\psi_{\cc_0})\vert_{\{s\}\times X}$ are isomorphic, they are in the same $\GL(V)$-orbit. An isomorphism induces an isomorphism of vector spaces $H^0(\f_{\cc}(m)\vert_{\{t\}\times X})\cong H^0(\f_{\cc}(m)\vert_{\{s\}\times X})$ which in turn, via the isomorphism of vector spaces of sections $H^0(q_{\cc}\otimes\id_{\oo_X(m)})$, induces an automorphism of $V$, i.e. an element of $\GL(V)$. It is clear that this induces the isomorphism that we started with, as required.
\end{proof}

\section{The moduli space for \texorpdfstring{$G$}{G} semisimple}\label{section-moduli-space-semisimple}

We maintain the notation of the previous section. In order to perform a GIT quotient $\cc_0\sslash\GL(V)$ providing a moduli space parametrizing isomorphism classes of polystable $(\theta,c)$-twisted equivariant $G$-bundles, we need to find a linearization of the $\GL(V)$-action on some very ample line bundle over $\cc_0$, such that the corresponding GIT definition of (semi, poly)stablity is equivalent to the definitions of Section \ref{section-semistable-twisted-equivariant-G-bundles}. This section is devoted to showing that such a linearization exists, and that the resulting moduli space is a complex projective variety.

\subsection{Gieseker space and linearization}\label{section-gieseker}

In order to find an appropriate linearization of the $\GL(V)$-action on $\cc_0$ and to apply the Hilbert--Mumford criterion, we use an injective morphism into a projective space, usually called a Gieseker space --- the role of Gieseker spaces in the construction of the moduli space of $G$-bundles is explained in \cite[Section 2.3.5]{schmitt}.

Let $\Gg_1:=\Hom(\bigwedge^n V\otimes\oo_X,\oo_X(nm))$ and $\GG_1:=\PP(\Gg_1)$ --- a complex projective space, 
equipped with the very ample line bundle $\oo_{\GG_1}(1)$.
With notation as in Section \ref{section-parameter-space-pseudo-equivariant-pairs},
let $\Ll_{\cc}\to \cc$ be the line bundle that satisfies $\bigwedge^n\f_{\cc}\cong\pi_{\cc}^*\Ll_{\cc}$ --- this exists because the determinant of $\f_{\cc}\vert_{\{s\}\times X}$ is trivial for every $s\in\cc$, see Section \ref{section-bounded}. Then 
the surjective homomorphism $q_\cc:V_{\cc}(-m)\to\f_{\cc}$ induces a homomorphism 
\begin{equation*}
    \bigwedge^n (q_\cc\otimes\id_{\pi_X^*\oo_X(nm)}):\bigwedge^n V\otimes\oo_{\cc\times X}\to \pi_X^*\oo_X(nm)\otimes\pi_{\cc}^*\Ll_{\cc}.
\end{equation*}
This, in turn, determines a morphism 
\begin{equation*}
    \Gies_1:\cc\to\GG_1
\end{equation*}
such that $\Gies_1^*\oo_{\GG_1}(1)\cong\Ll_{\cc}$.

Now let $V_{a,b}:=\left(V^{\otimes a}\right)^{\oplus b}=V_{a,b,0}$,
\begin{equation*}
    \Gg_2:=\Hom\left(V_{a,b}\otimes \oo_X, \oo_X(ma)\right)
\end{equation*}
and $\GG_2:=\PP(\Gg_2)$ --- a complex projective space, equipped with the very ample line bundle $\oo_{\GG_2}(1)$. Then the homomorphism $\phi_{\cc}:(\f_{\cc})_{a,b,c}\to \pi_X^*\oo_X$ induces a homomorphism
\begin{equation*}
    V_{a,b}\otimes\oo_{\cc\times X}\to(\f_{\cc})_{a,b}\otimes\pi_X^*(\oo_X(ma))\to \pi_X^*(\oo_X(ma))\otimes\pi_{\cc}^*(\Ll^{\otimes c}_{\cc}),
\end{equation*}
which in turn induces a morphism $\Gies_2:\cc\to\GG_2$ such that $\Gies_2^*\oo_{\GG_2}(1)\cong \Ll_{\cc}^{\otimes c}$. 

Finally, let
\begin{equation*}
    V_{\Gamma}:=\bigoplus_{\gamma\in\Gamma}\Hom(V\otimes\oo_X,\gamma^*(V\otimes\oo_X))\andd
    V_{\Gamma}^\vee:=\bigoplus_{\gamma\in\Gamma}\Hom(V^*\otimes\oo_X,\gamma^*(V^*\otimes\oo_X)),
\end{equation*}
and set
$\Gg_3:=V_{\Gamma}\oplus V^\vee_{\Gamma}$ and $\GG_3:=\PP(\Gg_3)$. The tautological section $\psi'\oplus\psi''\in (p_{\cc}\times\id_X)^*(\Vg\ii\oplus \Vg\ii^\vee)$ determines a morphism $\Gies_3:\cc\to\GG_3$. 

The product
\begin{equation*}
    \Gies:=\Gies_1\times\Gies_2\times\Gies_3:\cc\to\GG:=\GG_1\times\GG_2\times\GG_3
\end{equation*} 
is an injective $\GL(V)$-equivariant morphism. Of course, it restricts to an injective morphism on the locally closed subvariety $\cc_0$. We refer to $\GG$ as the \textbf{Gieseker space}, and to $\Gies$ as the \textbf{Gieseker morphism}.

Given positive integers $s_1,s_2$ and $\chi$, let $\oo_{\GG}(s_1,s_2,s_1\chi):=\oo_{\GG_1}(s_1)\otimes\oo_{\GG_2}(s_2)\otimes\oo_{\GG_3}(s_1\chi)$, a very ample line bundle. For each triple $(s_1,s_2,\chi)$ the natural $\GL(V)$-action on $\oo_{\GG}(s_1,s_2,s_1\chi)$ determines a linearization of the $\GL(V)$-action on $\GG$ --- and, therefore, a linearization of the $\GL(V)$-action on $\cc$. 

Note that, for each 1-parameter subgroup $\lambda:\C^*\to \GL(V)$ and every point $t=(t_1,t_2,t_3)\in\GG$, 
\begin{equation*}
    \mu(t,\lambda)=s_1\mu(t_1,\lambda)+s_2\mu(t_2,\lambda)+s_1\chi\mu(t_3,\lambda),
\end{equation*}
where $\mu$ is defined by taking representatives of $t,t_1,t_2$ and $t_3$ in the respective vector spaces and using (\ref{eq-def-mu}).
Note also that, by Remark \ref{remark-properties-mu}, if $t_3=[\Psi',\Psi'']\in\GG_3$ for some $\Psi'\in V_{\Gamma}$ and $\Psi''\in V^\vee_{\Gamma}$, then $\mu(t_3,\lambda)=\max\left(\mu(\Psi',\lambda),\mu(\Psi'',\lambda)\right)$. If we further assume that $t\in\Gies(\cc_0)$, then $\mu(\Psi'',\lambda)=\mu(\Psi'^{*-1},\lambda)$ and we may argue that $\mu(t_3,\lambda)\ge 0$ as in the proof of Lemma \ref{lemma-mu-f-ge-0}, with equality if and only if $\lambda$ is $\Gamma$-invariant with respect to the $\Gamma$-equivariant structure on $V\otimes\oo_X$ determined by $\Psi'$. 

Given $\delta>0$, set
\begin{equation}\label{eq-def-epsilon}
    \epsilon:=\frac{s_1}{s_2}=\frac{\dim V-a\delta}{n\delta}.
\end{equation}
Similarly to the proof of Lemma \ref{lemma-delta,chi-semistable-finite-set}, there exists a finite set $S$ of 1-parameter subgroups $\lambda:\C^*\to \GL(V)$ such that $t=(t_1,t_2,t_3)\in\GG$ if (semi)stable if and only if $\mu(g\cdot t,\lambda)\geqp0$ for every $g\in\GL(V)$ and $\lambda\in S$. We may further assume that all the elements in $S$ have integral weights. 

Since the weights of each $\lambda\in S$ are bounded, given a fixed $\lambda\in S$, $\mu(g\cdot t_i,\lambda)$ is bounded for each $\lambda\in S$ and $i=1,2,3$. In particular, we may choose $\chi\gg\epsilon^{-1}\gg0$ so that, for every 1-parameter subgroup in $S$ and every point $t\in\GG$, $\mu(t,\lambda)>0$ if $\mu(t_3,\lambda)>0$ for each $\lambda\in S$. Therefore, if $t\in\Gies(\cc_0)$ and $\lambda\in S$, then $\mu(t,\lambda)>0$ if and only if either $\mu(t_3,\lambda)>0$, or $\mu(t_3,\lambda)=0$ and $s_1\mu(t_1,\lambda)+s_2\mu(t_2,\lambda)>0$, with $\mu(t_3,\lambda)=0$ if and only if $\lambda$ is $\Gamma$-invariant with respect to the $\Gamma$-equivariant structure defined by $t_3$. In particular, $\mu(t,\lambda)\geqp0$ for every 1-parameter $\lambda$ if and only if $\mu(t,\lambda)\geqp0$ for every $\Gamma$-invariant $\lambda$.

With the above linearization, the Gieseker map allows us to study the GIT quotient $\cc_0\sslash\GL(V)$ by means of the GIT quotient $\GG\sslash\GL(V)$, where $\GG$ is a projective space.

\subsection{Hilbert--Mumford criterion vs (semi)stability}\label{section-hilbert-mumford}

Denote by $\mathfrak G(m_1)$ the set of isomorphism classes of vector bundles $E$ of rank $n$ and trivial determinant for which there exist $m\ge m_1$ and $t\in\cc_0$ such that $E\cong \f_{\cc}\vert_{\{t\}\times X}$ and $\Gies(t)$ is semistable with respect to the linearization in Section \ref{section-gieseker}. In what follows we assume that $\chi\gg\epsilon^{-1}\gg0$ or, alternatively, $\chi\gg\delta\gg0$ --- see Proposition \ref{prop-semistable-pseudo-vs-pairs} and the end of Section \ref{section-gieseker} on why we need these assumptions.

\begin{lemma}\label{lemma-m1}
    There is $m_1>0$ and $C\in\Q$, such that the following holds true: For every $E\in\mathfrak G(m_1)$, and every $\Gamma$-invariant vector subbundle $F\subset E$ --- with respect to the $\Gamma$-equivariant structure given by a point $t\in\cc_0$ corresponding to $E$ ---, $\mu(F)\le C$. 
\end{lemma}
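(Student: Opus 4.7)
The plan is to apply the Hilbert--Mumford criterion to a 1-parameter subgroup of $\GL(V)$ constructed from $F$, using the $\Gamma$-invariance of $F$ to make this 1-ps $\Gamma$-invariant and thereby kill the contribution of $\Gies_3$ (cf.\ the end of Section \ref{section-gieseker}). First, since the family underlying $\mathfrak G(m_1)$ sits inside the bounded family parametrized by $\qq$ (see Section \ref{section-bounded}), I would choose $m_1$ large enough so that, for every $m\ge m_1$ and every $E$ in this bounded family, the evaluation map $V\otimes\oo_X\to E(m)$ is surjective, $H^1(E(m))=0$, and the natural map $V\to H^0(E(m))$ is an isomorphism.

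Fix $t\in\cc_0$ representing $E\in\mathfrak G(m_1)$ and a proper $\Gamma$-invariant subbundle $F\subset E$ of rank $r\ge 1$, and set $V_F:=H^0(F(m_1))\subset V$. Because $F$ is $\Gamma$-invariant in $E$, the subspace $V_F$ is $\Gamma$-invariant with respect to the $\Gamma$-action on $V$ induced by $\psi'(t)$; since $\Gamma$ is finite there exists a $\Gamma$-invariant complement $V'\subset V$. Let $\lambda:\Gm\to\GL(V)$ be the 1-parameter subgroup that scales $V_F$ with weight $\dim V-\dim V_F$ and $V'$ with weight $-\dim V_F$; it is $\Gamma$-invariant, so $\mu(\Gies_3(t),\lambda)=0$ by the end of Section \ref{section-gieseker}. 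The GIT semistability of $\Gies(t)$ then yields
\[
s_1\,\mu(\Gies_1(t),\lambda)+s_2\,\mu(\Gies_2(t),\lambda)\ge 0.
\]

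A standard Gieseker weight computation shows that $\mu(\Gies_1(t),\lambda)=r\dim V-n\dim V_F$: the map $\bigwedge^n V\otimes\oo_X\to\oo_X(nm_1)$ vanishes on $\bigwedge^j V_F\otimes\bigwedge^{n-j}V'$ for $j>r$ because $\bigwedge^j F(m_1)=0$, and it is non-zero for $j=r$ by generic surjectivity of both $V_F\otimes\oo_X\to F(m_1)$ and $V'\otimes\oo_X\to(E/F)(m_1)$. Similarly, the top weight of $\lambda$ on $V_{a,b}=(V^{\otimes a})^{\oplus b}$ gives $\mu(\Gies_2(t),\lambda)\le a(\dim V-\dim V_F)$. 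Combining these yields
\[
\dim V_F\le\frac{(s_1 r+s_2 a)\dim V}{s_1 n+s_2 a}.
\]
Now Riemann--Roch and $h^0\ge\chi$ give $\dim V_F\ge\deg F+r(1-g)+rm_1\deg\oo_X(1)$, while $\dim V=n(1-g)+nm_1\deg\oo_X(1)$. Substituting and simplifying produces a uniform upper bound
\[
\mu(F)=\frac{\deg F}{r}\le C,\qquad C:=\frac{a(n-1)\bigl(1-g+m_1\deg\oo_X(1)\bigr)}{s_1 n/s_2+a}.
\]

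The main obstacle is the Gieseker weight computation --- in particular, verifying non-vanishing of the weight-$(r\dim V-n\dim V_F)$ component of $\Gies_1(t)$. This non-vanishing requires both that $F(m_1)$ be generated by its global sections (so that $V_F\otimes\oo_X\to F(m_1)$ is surjective) and that the surjection $V\otimes\oo_X\to E(m_1)$ descend to a surjection $V'\otimes\oo_X\to(E/F)(m_1)$; both are ensured by the choice of $m_1$ made at the outset. The rest is bookkeeping of constants, and the resulting bound $C$ is rational as required.
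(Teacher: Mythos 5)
Your proposal is correct and follows essentially the same route as the paper: both construct a $\Gamma$-invariant one-parameter subgroup of $\SL(V)$ from the subspace $H^0(F(m))\subset V$, use $\Gamma$-invariance to force $\mu(t_3,\lambda)=0$, invoke GIT semistability of $\Gies(t)$, and then run the standard Gieseker weight estimate (which the paper delegates to \cite[2.3.5.12]{schmitt} and you carry out explicitly). The only remark worth making is that the ``main obstacle'' you identify is not actually needed: the inequality you use only requires the \emph{upper} bound $\mu(\Gies_1(t),\lambda)\le r\dim V-n\dim V_F$, which follows from the vanishing of the components with more than $r$ factors from $V_F$ alone, so the non-vanishing of the weight-$(r\dim V-n\dim V_F)$ component plays no role.
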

\begin{proof}
    Fix a vector bundle $E\in\mathfrak G$ with corresponding point $t\in\cc_0$. Following the proof of \cite[2.3.5.12]{schmitt}, we may find a lower bound for 
    \begin{equation*}
    \mumin:=\min\{\mu(E/F)\suhthat F\subset E\; \text{is a $\Gamma$-invariant subbundle}\}. 
    \end{equation*}
    Indeed, fix a $\Gamma$-invariant subbundle $F\subset E$. There is an exact sequence
    \begin{equation*}
        0\to H^0(F(m))\to H^0(E(m))\to H^0(Q(m)),
    \end{equation*}
    where $Q:=E/F$. Take a 1-parameter subgroup $\lambda:\C^*\to \SL(V)$ whose corresponding weighted filtration is
    \begin{equation*}
        \left(V_{\bullet}(\lambda):\{0\}\subset V_1:=H^{0}(q(m))^{-1}\left(H^0(F(m))\right)\subset V,\;\alpha_{\bullet}(\lambda)=(1)\right).
    \end{equation*}
    Since $F$ is $\Gamma$-invariant, so is $V_1$, the weighted filtration $V_{\bullet}(\lambda)$ and $\lambda$ itself. Hence, if $\Gies(t)=(t_1,t_2,t_3)$, then $\mu(t_3,\lambda)=0$. In particular, $\mu(\Gies(t),\lambda)=s_1\mu(t_1,\lambda)+s_2\mu(t_2,\lambda)$. Since $\Gies(t)$ is semistable, $s_1\mu(t_1,\lambda)+s_2\mu(t_2,\lambda)\ge0$. Applying the same calculation as in the proof of \cite[2.3.5.12]{schmitt}, we conclude that $\mumin(E)\ge -\frac{\delta a}n-g$, as required.
\end{proof}

\begin{lemma}\label{lemma-m2}
    There exists a positive integer $m_2\ge m_1$ and a finite set
    \begin{equation*}
        S\subset \{(n_{\bullet},\alb)\suhthat n_j\in\N,\alpha_j\in\Q_{\ge 0},\sum_jn_j=n\},
    \end{equation*}
    such that any $m\ge m_2$ satisfies the following. For every pseudo-equivariant pair $(E,\sigma,\phi)$ such that $E\in\mathfrak G(m_2)$, the following conditions are equivalent.
    \begin{enumerate}
        \item $(E,\sigma,\phi)$ is $(\delta,\chi)$-(semi)stable.
        \item For every $\Gamma$-invariant weighted filtration $(\eb,\alb)$ with $(\rk\eb,\alb)\in S$ such that $E_j(m)$ is globally generated and $h^1(E_j(m))=0$ for each $i$, 
        \begin{equation*}
            \sum_{j}\alpha_j\left(h^0(E(m))\rk(E_j)-h^0(E_j(n))\rk(E)\right)+\delta\mu(\eb,\alb,\sigma)\geqp 0.
        \end{equation*}
    \end{enumerate}
\end{lemma}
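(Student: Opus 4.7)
The plan is to combine three ingredients: the reduction to a finite set of filtration data from Lemma \ref{lemma-delta,chi-semistable-finite-set}, the slope bound for $\Gamma$-invariant subbundles from Lemma \ref{lemma-m1}, and Castelnuovo--Mumford regularity, so as to match the degree-theoretic quantity $M(\eb,\alb)$ with its cohomological counterpart via Riemann--Roch.

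First I would apply Lemma \ref{lemma-delta,chi-semistable-finite-set} to produce the finite set $S$ of rank/weight data; as in that proof, we may assume all weights appearing are integral. Since the ranks of the $E_j$ appearing in filtrations with $(\rk\eb,\alb)\in S$ are bounded by $n$, and Lemma \ref{lemma-m1} gives a uniform upper bound for the slope of every $\Gamma$-invariant subbundle $E_j\subset E$ with $E\in\mathfrak G(m_1)$, Grothendieck's boundedness theorem implies that the family of all such pairs $(E,E_j)$ is bounded. Castelnuovo--Mumford regularity then furnishes an integer $m_2\ge m_1$ such that for every $m\ge m_2$, every $E\in\mathfrak G(m_2)$, and every $\Gamma$-invariant subbundle $E_j\subset E$ whose rank appears in $S$, the bundle $E_j(m)$ is globally generated and $h^1(E_j(m))=0$; I would enlarge $m_2$ further, if needed, so that the same holds for $E$ itself.

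Under these hypotheses, Riemann--Roch gives $h^0(E_j(m))=\deg(E_j)+\rk(E_j)(md+1-g)$, where $d=\deg\oo_X(1)$ and $g$ is the genus of $X$, and similarly for $E$. Using $\deg E=0$ from Remark \ref{remark-degree-0}, a direct computation then yields
\begin{equation*}
    h^0(E(m))\rk(E_j)-h^0(E_j(m))\rk(E)=\deg(E)\rk(E_j)-\deg(E_j)\rk(E),
\end{equation*}
so the cohomological sum in (2) coincides with $M(\eb,\alb)$ for every $\Gamma$-invariant $(\eb,\alb)$ with $(\rk\eb,\alb)\in S$ satisfying the generation/vanishing conditions. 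The equivalence then falls out as follows. For $(1)\Rightarrow(2)$, a $\Gamma$-invariant filtration satisfies $\mu(\eb,\alb,f)=0$ by Lemma \ref{lemma-mu-f-ge-0}, so $(\delta,\chi)$-semistability directly yields the cohomological inequality. For $(2)\Rightarrow(1)$, Lemma \ref{lemma-delta,chi-semistable-finite-set} reduces the check to weighted filtrations with $(\rk\eb,\alb)\in S$: on $\Gamma$-invariant ones, the identity above converts (2) into the required inequality (since $\chi\mu(\eb,\alb,f)=0$); on non-$\Gamma$-invariant ones, the integrality of the weights combined with Lemma \ref{lemma-mu-f-ge-0} gives $\mu(\eb,\alb,f)\ge 1$, while $M(\eb,\alb)$ and $\mu(\eb,\alb,\sigma)$ are uniformly bounded below over $E\in\mathfrak G(m_2)$ and $(\rk\eb,\alb)\in S$ by the boundedness established above, so the standing hypothesis $\chi\gg\delta\gg 0$ forces the inequality to hold automatically.

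The main obstacle is the joint boundedness of the pairs $(E,E_j)$, which is what allows a single choice of $m_2$ to serve for \emph{all} relevant filtrations simultaneously and thus to convert the cohomological inequality in (2) into the numerical one via Riemann--Roch. Lemma \ref{lemma-m1} together with the finiteness of $S$ is tailored precisely for this purpose; once boundedness is in hand, the translation between the cohomological and numerical conditions is routine.
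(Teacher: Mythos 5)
There is a genuine gap in your boundedness step, and it is exactly the point the lemma is designed to get around. You claim that because Lemma \ref{lemma-m1} gives a uniform \emph{upper} bound on the slopes of $\Gamma$-invariant subbundles $E_j\subset E$, ``Grothendieck's boundedness theorem'' implies that the family of all pairs $(E,E_j)$ is bounded. Grothendieck's lemma goes the other way: it bounds the family of subsheaves whose slope is bounded \emph{below} (equivalently, quotients with slope bounded above). Subbundles with slope only bounded above form an unbounded family --- already $\oo_X^{\oplus 2}$ has line subbundles of arbitrarily negative degree --- so no single $m_2$ can make $E_j(m)$ globally generated with $h^1(E_j(m))=0$ for \emph{all} $\Gamma$-invariant subbundles occurring in filtrations with $(\rk\eb,\alb)\in S$. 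This breaks your argument in both directions: for $(2)\Rightarrow(1)$ you need to say something about filtrations that are not covered by condition (2) because they fail the generation/vanishing hypotheses, and these exist for every $m$; and your uniform lower bounds ``by the boundedness established above'' rest on the same false premise (although a lower bound for $M(\eb,\alb)$ does follow from Lemma \ref{lemma-m1} directly, without any boundedness of the family of subbundles).

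The paper closes this gap with an intermediate step you are missing: it shows that if some $E_j$ in the filtration has slope below a suitable threshold $C''$, then $M(\eb,\alb)$ is so large and positive (each term of $M$ is $\alpha_j\rk(E_j)\rk(E)(\mu(E)-\mu(E_j))$, and the remaining terms are bounded below by Lemma \ref{lemma-m1}) that it dominates the uniform lower bound $\mu(\eb,\alb,\sigma)\ge -C'''$ coming from \cite[Lemma 1.5.1.41]{schmitt}, so the semistability inequality holds automatically and such filtrations may be discarded. Only the filtrations whose subbundles have slopes trapped in the interval $(C'',C]$ remain, and \emph{that} family is bounded; Castelnuovo--Mumford regularity and the Riemann--Roch translation (which you carry out correctly) then apply to it. Your Riemann--Roch identity and the treatment of non-$\Gamma$-invariant filtrations via $\chi\gg\delta\gg 0$ and $\mu(\eb,\alb,f)\ge 1$ are fine, but without the slope-truncation step the proof does not go through.
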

\begin{proof}
    
    By Lemma \ref{lemma-delta,chi-semistable-finite-set}, there is a finite set
    \begin{equation*}
        S\subset \{(n_{\bullet},\alb)\suhthat n_j\in\N,\alpha_j\in\Q_{\ge 0},\sum_jn_j=n\}
    \end{equation*}
    such that the $(\delta,\chi)$-(semi)stability of any pseudo-equivariant pair $(E,\sigma,f)$ 
    only has to be verified for weighted filtrations $(\eb,\alb)$ such that $(\rk\eb,\alb)\in S$. Since $\chi\gg\delta\gg 0$, this only has to be verified for $\Gamma$-invariant weighted filtrations $(\eb,\alb)$ such that $(\rk\eb,\alb)\in S$. Fix a constant $C'$. Then, there exists $C''$ such that, for every pseudo-equivariant pair $(E,\sigma,f)$ with $E\in\mathfrak G(m_1)$ and every $\Gamma$-invariant weighted filtration $(\eb,\alb)$ such that $(\rk\eb,\alb)\in S$, if
    \begin{equation}\label{eq-condition-mu(Ei)}
        \mu(E_j)\le C''
    \end{equation}
    for some $i$, then $M(\eb,\alb)>C'$. This follows directly from $\mu(E_j)\le\mumaxg(E)$ and the uniform bound on $\mumaxg(E)$ given by Lemma \ref{lemma-m1}, where $\mumaxg(E)$ is the maximum slope attained by a $\Gamma$-invariant vector subbundle of $E$. Using \cite[Lemma 1.5.1.41]{schmitt}, we may determine a constant $C'''$
    such that $\mu(\eb,\alb,\sigma)\ge -C'''$ for every $\Gamma$-invariant weighted filtration $(\eb,\alb)$ such that $(\rk\eb,\alb)\in S$. If we choose $C'\ge\delta C'''$, then 
    \begin{equation*}
        M(\eb,\alb)+\delta\mu(\eb,\alb,\sigma)>C'-\delta C'''\ge0.
    \end{equation*}
    Therefore, $(\delta,\chi)$-(semi)stability only has to be verified for $\Gamma$-invariant weighted filtrations $(\eb,\alb)$ such that $(\rk\eb,\alb)\in S$ and (\ref{eq-condition-mu(Ei)}) fails. By a slightly refined version of \cite[Proposition 2.2.3.7]{schmitt}, these weighted filtrations live in bounded families, hence the lemma follows by \cite[Proposition 2.2.3.2]{schmitt}.
\end{proof}

\begin{proposition}\label{prop-m3}
    There is $m_3\ge m_2$ such that, for every $m\ge m_3$ and every point $t\in\cc_0$ with (semi)stable Gieseker point $\Gies(t)\in\GG$, the corresponding pseudo-equivariant pair $(E_t,\sigma_t,f_t)$ is $(\delta,\chi)$-semistable.
\end{proposition}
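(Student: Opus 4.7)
The plan is to apply the Hilbert--Mumford numerical criterion to the linearization chosen in Section \ref{section-gieseker} and to translate the weights $\mu(\Gies(t),\lambda)$ back into the geometric quantities appearing in the $(\delta,\chi)$-(semi)stability inequality. By Lemma \ref{lemma-m2}, for $m\ge m_2$ the $(\delta,\chi)$-(semi)stability of $(E_t,\sigma_t,f_t)$ only needs to be verified against $\Gamma$-invariant weighted filtrations $(\eb,\alb)$ with $(\rk\eb,\alb)\in S$ and such that each $E_j(m)$ is globally generated with $h^1(E_j(m))=0$. So it will suffice, for every such $(\eb,\alb)$, to manufacture a 1-parameter subgroup of $\GL(V)$ whose Hilbert--Mumford weight against $\Gies(t)$ reproduces the relevant expression.

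Given such a filtration, I would set $V_j:=H^0(E_j(m))\subset H^0(E(m))\cong V$, where the last isomorphism comes from $q_{\cc}\vert_{\{t\}\times X}$. Since $(\eb,\alb)$ is $\Gamma$-invariant, $V_{\bullet}$ is a $\Gamma$-invariant filtration of $V$. Let $\lambda:\Gm\to\GL(V)$ be a 1-parameter subgroup whose associated weighted filtration is $(V_{\bullet},\alb)$. Being $\Gamma$-invariant with respect to the $\Gamma$-equivariant structure on $V\otimes\oo_X$ induced by $t_3:=\Gies_3(t)$, the discussion at the end of Section \ref{section-gieseker} gives $\mu(t_3,\lambda)=0$ and hence
$$\mu(\Gies(t),\lambda)=s_1\mu(t_1,\lambda)+s_2\mu(t_2,\lambda).$$
Exactly as in the swamp computation of \cite[Section 2.3.5]{schmitt}, $\mu(t_1,\lambda)$ is a positive multiple of $\sum_j\alpha_j(\dim V\cdot\rk E_j-\dim V_j\cdot n)$ while $\mu(t_2,\lambda)$ is a positive multiple of $\mu(\eb,\alb,\sigma)$. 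Under our assumptions $\dim V=h^0(E(m))$ and $\dim V_j=h^0(E_j(m))$, and the relation (\ref{eq-def-epsilon}) calibrates $s_1,s_2$ so that, up to a common positive factor, $\mu(\Gies(t),\lambda)$ becomes
$$\sum_j\alpha_j\bigl(h^0(E(m))\rk(E_j)-h^0(E_j(m))\,n\bigr)+\delta\,\mu(\eb,\alb,\sigma).$$
Hence the Hilbert--Mumford inequality $\mu(\Gies(t),\lambda)\geqp 0$ becomes exactly condition (2) of Lemma \ref{lemma-m2}, and the conclusion follows.

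The main obstacle is selecting $m_3$ uniformly: one must verify that the assignment $(\eb,\alb)\mapsto(V_{\bullet},\alb)$ is faithful (so that $\lambda$ genuinely encodes the geometric filtration) and that the two $\mu$-computations above yield the stated numerical match for every $E\in\mathfrak G(m_3)$ and every filtration in the finite set $S$. This reduces to showing that the subbundles $E_j$ which appear form a bounded family, so that Serre vanishing applied to this family delivers global generation and vanishing $h^1(E_j(m))=0$ uniformly. The uniform slope bound of Lemma \ref{lemma-m1} on $\Gamma$-invariant subbundles, combined with finiteness of $S$, gives such boundedness by a standard argument along the lines of \cite[Proposition 2.2.3.7]{schmitt}, and then \cite[Proposition 2.2.3.2]{schmitt} provides the required $m_3\ge m_2$.
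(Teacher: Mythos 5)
Your proposal is correct and follows essentially the same route as the paper, which likewise reduces the statement to condition (2) of Lemma \ref{lemma-m2} and then invokes the Hilbert--Mumford weight computation of \cite[Theorem 2.3.5.13]{schmitt} via the $\Gamma$-invariant 1-parameter subgroup built from $V_j=H^0(E_j(m))$. Your closing worry about choosing $m_3$ uniformly is already absorbed into Lemma \ref{lemma-m2}, whose condition (2) only quantifies over filtrations with $E_j(m)$ globally generated and $h^1(E_j(m))=0$, so nothing beyond $m\ge m_2$ is needed for that step.
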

\begin{proof}
    We just need to check condition (2) in the statement of Lemma \ref{lemma-m2}, applied to $(E_t,\sigma_t,f_t)$. To do so, follow the same argument as in the proof of \cite[Theorem 2.3.5.13]{schmitt} after the statement of \cite[Corollary 2.3.5.14]{schmitt}.
\end{proof}

Recall that, by construction, for every $(\delta,\chi)$-(semi)stable pseudo-equivariant pair $(E,\sigma,f)$ corresponding to a twisted equivariant $G$-bundle via Propositions \ref{prop-equivalence-of-categories-twisted-pseudo} and \ref{prop-pseudo-are-pairs}, there exists $t\in\cc_0$ such that $(E,\sigma,f)\cong(\f_{\cc_0}\vert_{\{t\}\times X},\phi_{\cc_0}\vert_{\{t\}\times X},\psi_{\cc_0}\vert_{\{t\}\times X})$ --- see Section \ref{section-parameter-space-pseudo-equivariant-pairs}.

\begin{lemma}\label{lemma-m4}
    There exists a positive integer $m_4$, such that any $(\delta,\chi)$-(semi)stable pseudo-equivariant pair $(E,\sigma,f)$ satisfies
    \begin{equation*}
        \sum_{j}\alpha_j(\dim V\cdot \rk E_j-h^0(E_j(m))\cdot n)+\delta\cdot\mu(\eb,\alb,\sigma)\geqp0
    \end{equation*}
    for every $\Gamma$-invariant weighted filtration $(\eb,\alb)$ of $E$ and every $m\ge m_4$ .
\end{lemma}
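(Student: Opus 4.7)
The key algebraic input is the identity
\[
\dim V\cdot\rk E_j - h^0(E_j(m))\cdot n \;=\; -n\deg E_j \;-\; n h^1(E_j(m)),
\]
which follows from Riemann--Roch using $\deg E = 0$ (a consequence of $(\delta,\chi)$-(semi)stability via Proposition~\ref{prop-semistability-underlying-bundle} together with the fact that the extension of a semistable $G$-bundle for semisimple $G$ remains semistable as a vector bundle) and the fact that $\dim V = h^0(E(m)) = n(m\deg\oo_X(1) + 1 - g)$ for $m$ large enough to ensure $h^1(E(m))=0$. Summing against $\alb$, the left-hand side of the lemma's inequality becomes
\[
M(\eb,\alb) + \delta\mu(\eb,\alb,\sigma) - n\sum_j \alpha_j h^1(E_j(m)).
\]
Since $\eb$ is $\Gamma$-invariant, Lemma~\ref{lemma-mu-f-ge-0} gives $\mu(\eb,\alb,f)=0$, so $(\delta,\chi)$-(semi)stability implies $M(\eb,\alb) + \delta\mu(\eb,\alb,\sigma) \geqp 0$. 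Hence the task is to choose $m_4$ so that the correction term $n\sum_j \alpha_j h^1(E_j(m))$ is uniformly dominated by $M + \delta\mu$.

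My plan is to follow the two-step strategy used in the proof of Lemma~\ref{lemma-m2}. First I would invoke a fan-decomposition argument analogous to Lemma~\ref{lemma-delta,chi-semistable-finite-set}, noting that the left-hand side is linear in $\alb$ plus the convex piecewise-linear term $\delta\mu(\eb,\alb,\sigma)$, to reduce the verification to $\Gamma$-invariant weighted filtrations with $(\rk\eb,\alb)$ in a finite set $S$ independent of $m$ and of $(E,\sigma,f)$. Next, fix a threshold $C''$ and split cases. If every $E_j$ in the filtration satisfies $\mu(E_j) > C''$, then combining with the bound $\mu(E_j)\le 0$ from vector-bundle semistability (or Lemma~\ref{lemma-m1}) and the finitely many possible values of $\rk E_j$, the subbundles $E_j$ range over a bounded family; Grothendieck's boundedness theorem then furnishes $m_4$ such that $h^1(E_j(m))=0$ for every $m\ge m_4$ and every such $E_j$, and the inequality reduces to the already-established $M+\delta\mu \geqp 0$. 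If instead some $\mu(E_j)\le C''$, then the deep negativity of $\deg E_j$ makes $M(\eb,\alb)$ very large; using the Serre-duality estimate $h^1(E_j(m)) \le \rk E_j\cdot\max(0, -\mu(E_j)-m\deg\oo_X(1)+2g-1)$ together with the lower bound $\mu(\eb,\alb,\sigma) \ge -C'''$ from \cite[Lemma 1.5.1.41]{schmitt}, a direct calculation gives
\[
M(\eb,\alb) + \delta\mu(\eb,\alb,\sigma) - n\sum_j \alpha_j h^1(E_j(m)) \;\ge\; n\bigl(m\deg\oo_X(1)-2g+1\bigr)\sum_j \alpha_j \rk E_j \;-\; \delta C''',
\]
which is positive for $m$ sufficiently large.

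The main obstacle lies in the second case: one must calibrate $C''$, $C'''$ and $m_4$ so that they are all independent of the pair $(E,\sigma,f)$, and then verify that $m_4$ can be taken large enough to dominate the lower bound $\delta C'''$ and the implicit constants arising from $g$ and $\deg\oo_X(1)$ uniformly across the finite set $S$. This is essentially the pseudo-equivariant adaptation of the Le~Potier--Simpson type estimates of \cite[Section 2.3.5]{schmitt}; the $\Gamma$-invariance is used both to force $\mu(\eb,\alb,f)=0$ and to control $\mu(E_j)$ from above via Lemma~\ref{lemma-m1} in the reduction to the bounded case.
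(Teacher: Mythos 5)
Your Riemann--Roch reformulation, the reduction to a finite set $S$ of $(\rk\eb,\alb)$, and Case 1 (all $\mu(E_j)>C''$, boundedness, $h^1(E_j(m))=0$, and then $(\delta,\chi)$-semistability together with Lemma \ref{lemma-mu-f-ge-0} giving $M(\eb,\alb)+\delta\mu(\eb,\alb,\sigma)\geqp 0$) are sound, and in spirit this matches the paper's argument, which outsources the analytic work to \cite[Proposition 2.3.5.15]{schmitt} and only checks that the associated sub-filtration $(\eb^A,\alb^A)$ used there remains $\Gamma$-invariant. The gap is in Case 2. Your Serre-duality estimate $h^1(E_j(m))\le\rk E_j\max(0,-\mu(E_j)-m\deg\oo_X(1)+2g-1)$ is false when $E_j$ is unstable: the Le Potier--Simpson bound for $h^1(E_j(m))=h^0(K\otimes E_j^*(-m))$ is governed by $\mu_{\min}(E_j)$ (or the full Harder--Narasimhan spectrum), not by the average slope. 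For $E_j=\oo_X\oplus L$ with $\deg L=-N$ and $N\gg 0$, your bound is roughly $N-2m\deg\oo_X(1)$ while the true value is roughly $N-m\deg\oo_X(1)$, which exceeds it for large $m$. Worse, the displayed conclusion of your ``direct calculation'' is equivalent, after expanding $M$ and $h^1$ via Riemann--Roch, to $\sum_j\alpha_jh^0(E_j(m))\le g\sum_j\alpha_j\rk E_j$, which fails for any filtration containing a step with $h^0(E_j(m))\sim\rk E_j\,m\deg\oo_X(1)$ --- and such steps occur in essentially every filtration triggering Case 2.

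A repaired Case 2 must isolate the offending index rather than estimate the whole sum at once. Writing the $j$-th term as $n\bigl(\rk E_j(m\deg\oo_X(1)+1-g)-h^0(E_j(m))\bigr)$ and using $h^0(E_j(m))\le\rk E_j\bigl(1+\mumax(E_j(m))^+\bigr)$ together with $\mumax(E_j)\le\mumax(E)=0$, every term is bounded below by $-ng\rk E_j$, while an index with $\mu(E_{j_0})\le C''$ contributes at least $n\min(-C''-g,\,m\deg\oo_X(1)-gn)$; since $\alb$ ranges over the finite set $S$, choosing $-C''$ and then $m$ large makes this single positive term dominate the bounded negative contributions and the bound $\delta\mu(\eb,\alb,\sigma)\ge-\delta C'''$. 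Equivalently --- and this is what the paper does --- one passes to the sub-filtration of indices lying in a bounded family, which is still $\Gamma$-invariant, applies $(\delta,\chi)$-semistability only to it, and controls the discarded terms and the resulting change in $\mu(\cdot,\cdot,\sigma)$ by \cite[Lemma 1.5.1.41]{schmitt}. As written, your Case 2 neither isolates the bad index nor uses a valid $h^1$ bound, so the proof does not close.
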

\begin{proof}
    By the proof of \cite[Proposition 2.3.5.15]{schmitt}, the lemma follows if we can prove that $M(\eb^A,\alb^A)+\delta\mu(\eb^A,\alb^A,\sigma)\geqp0$ for an arbitrary $\Gamma$-invariant weighted filtration $(\eb,\alb)$ --- indeed, the last inequality of the last computation in the proof is the only place where the (semi)stability assumptions are used.
    Note that $\Gamma$-invariance of $(\eb,\alb)$ implies, in particular, that $(\eb^A,\alb^A)$ is $\Gamma$-invariant. By Lemma \ref{lemma-mu-f-ge-0} and Definition \ref{def-delta,chi-semistable-pairs}, the $(\delta,\chi)$-(semi)stability of $(E,\sigma,f)$ implies that $M(\eb^A,\alb^A)+\delta\mu(\eb^A,\alb^A,\sigma)\geqp0$, as required.
\end{proof}

\begin{proposition}\label{prop-m5}
    There exists a positive integer $m_5$ satisfying the following property. If $m\ge m_5$ and $(E,\sigma,f)$ is a $(\delta,\chi)$-(semi)stable pseudo-equivariant pair then, given $t\in \cc_0$ such that $(E,\sigma,f)\cong(\f_{\cc}\vert_{\{t\}\times X},\phi_{\cc}\vert_{\{t\}\times X},\psi_{\cc}\vert_{\{t\}\times X})$, the associated Gieseker point $\Gies(t)$ is (semi)stable for the linearization given in Section \ref{section-gieseker}.
\end{proposition}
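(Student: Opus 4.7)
The plan is to apply the Hilbert--Mumford criterion, which reduces (semi)stability of $\Gies(t) \in \GG$ with respect to the linearization of Section \ref{section-gieseker} to verifying $\mu(\Gies(t), \lambda) \geqp 0$ for every non-trivial 1-parameter subgroup $\lambda: \C^* \to \GL(V)$. Write $\Gies(t) = (t_1, t_2, t_3)$, so that
\begin{equation*}
\mu(\Gies(t), \lambda) = s_1 \mu(t_1, \lambda) + s_2 \mu(t_2, \lambda) + s_1 \chi \mu(t_3, \lambda).
\end{equation*}
As observed at the end of Section \ref{section-gieseker}, $\mu(t_3, \lambda) \ge 0$, with equality if and only if $\lambda$ is $\Gamma$-invariant with respect to the $\Gamma$-equivariant structure on $V \otimes \oo_X$ determined by $t_3$. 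Since the choice $\chi \gg \epsilon^{-1} \gg 0$ forces $\mu(\Gies(t), \lambda) > 0$ as soon as $\mu(t_3, \lambda) > 0$ for $\lambda$ in the finite collection $S$ of 1-parameter subgroups appearing there, it suffices to establish the Hilbert--Mumford inequality for $\Gamma$-invariant $\lambda$, that is, to show $s_1\mu(t_1,\lambda) + s_2\mu(t_2,\lambda) \geqp 0$ in this case.

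The second step is to translate a $\Gamma$-invariant 1-parameter subgroup $\lambda$, with weighted filtration $(V_\bullet, \gamma_\bullet)$ of $V$, into a $\Gamma$-invariant weighted filtration $(\eb, \alb)$ of the vector bundle $E = \f_{\cc} \vert_{\{t\}\times X}$, by taking the saturations of the images of $V_j \otimes \oo_X(-m) \to E$. Following the standard Gieseker computations, in the spirit of the proof of \cite[Theorem 2.3.5.13]{schmitt}, one identifies
\begin{equation*}
\mu(t_1, \lambda) = \sum_{j} \alpha_j\bigl(\dim V \cdot \rk E_j - h^0(E_j(m)) \cdot n\bigr)
\end{equation*}
and bounds $s_2 \mu(t_2, \lambda)$ from below by $s_1 \delta \mu(\eb, \alb, \sigma)$, where the ratio $s_1/s_2 = \epsilon = (\dim V - a\delta)/(n\delta)$ from (\ref{eq-def-epsilon}) is calibrated exactly to make these two expressions combine into the $(\delta, \chi)$-(semi)stability inequality for the filtration $(\eb, \alb)$.

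The third step is to invoke Lemma \ref{lemma-m4}: for $m \ge m_4$, the $(\delta, \chi)$-(semi)stability of $(E, \sigma, f)$ yields
\begin{equation*}
\sum_{j} \alpha_j \bigl(\dim V \cdot \rk E_j - h^0(E_j(m)) \cdot n\bigr) + \delta \mu(\eb, \alb, \sigma) \geqp 0
\end{equation*}
for every $\Gamma$-invariant weighted filtration $(\eb, \alb)$ of $E$. Multiplying by $s_1$ and combining with the identifications of the preceding paragraph gives $s_1\mu(t_1, \lambda) + s_2\mu(t_2, \lambda) \geqp 0$, completing the verification. Choosing $m_5 \ge \max(m_3, m_4)$ large enough that also $V \cong H^0(E(m))$ and $V_j \cong H^0(E_j(m))$ via $q_{\cc}$, together with $h^1(E_j(m)) = 0$, ensures both that the filtration extraction is well-behaved and that the Schmitt identifications of $\mu(t_1, \lambda)$ and $\mu(t_2, \lambda)$ hold without losses.

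The main obstacle is the careful matching of the discrete Gieseker weights $\mu(t_i, \lambda)$ with the geometric invariants $M(\eb, \alb)$ and $\mu(\eb, \alb, \sigma)$ of the induced filtration on $E$; this is the same technical core as in Schmitt's treatment of $\rho$-swamps and is handled uniformly precisely because Lemma \ref{lemma-m4} already restricts attention to a bounded family of relevant filtrations, so that large enough $m$ guarantees the cohomological conditions that make the comparison tight.
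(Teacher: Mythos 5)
Your proposal is correct and follows essentially the same route as the paper: reduce to $\Gamma$-invariant 1-parameter subgroups using $\chi\gg\epsilon^{-1}\gg0$, observe that $\Gamma$-invariance of $\lambda$ yields a $\Gamma$-invariant weighted filtration of $E$, run Schmitt's Gieseker-weight computation, and close with Lemma \ref{lemma-m4} in place of Schmitt's Proposition 2.3.5.15. The only cosmetic difference is that you cite Schmitt's Theorem 2.3.5.13 for the weight identifications where the paper points to Theorem 2.3.5.16 (the direction actually being proved), but the computations are the same.
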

\begin{proof}
    Since $\chi\gg\epsilon^{-1}\gg0$, it is enough to check that $s_1\mu(t_1,\lambda)+s_2\mu(t_2),\lambda)\geqp0$ for all non-trivial $\Gamma$-invariant 1-parameter subgroups $\lambda$. To do this, just copy the proof of \cite[Theorem 2.3.5.16]{schmitt} and replace the arbitrary $\lambda$ with a $\Gamma$-invariant $\lambda$, and \cite[Proposition 2.3.5.15]{schmitt} by Lemma \ref{lemma-m4}. This works because the $\Gamma$-invariance of $\lambda$ clearly implies that the associated filtration $\eb$ is $\Gamma$-invariant.
\end{proof}

\begin{theorem}\label{th-semistability-vs-hilbert-mumford}
    There exists a positive integer $m_0$ satisfying the following property. If $m\ge m_0$ and $\chi\gg\delta\gg0$, then, given $t\in \cc_0$ and $(E,\sigma,f):=(\f_{\cc}\vert_{\{t\}\times X},\phi_{\cc}\vert_{\{t\}\times X},\psi_{\cc}\vert_{\{t\}\times X})$, the pseudo-equivariant pair $(E,\sigma,f)$ is $(\delta,\chi)$-(semi)stable if and only if $\Gies(t)$ is (semi)stable for the linearization given in Section \ref{section-gieseker}.
\end{theorem}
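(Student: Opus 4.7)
The plan is essentially to assemble the theorem from the two directions already established, namely Propositions \ref{prop-m3} and \ref{prop-m5}. Setting $m_0 := \max(m_3, m_5)$, both propositions apply for every $m \ge m_0$ under the standing assumption $\chi \gg \delta \gg 0$ (equivalently, $\chi \gg \epsilon^{-1} \gg 0$ via the relation (\ref{eq-def-epsilon})), so the theorem will follow as soon as one verifies that the two implications fit together to give a genuine equivalence at each point $t \in \cc_0$.

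For the implication ``$\Gies(t)$ (semi)stable $\Rightarrow$ $(E,\sigma,f)$ is $(\delta,\chi)$-(semi)stable'', I would invoke Proposition \ref{prop-m3} directly: for $m \ge m_3$ and any $t \in \cc_0$ whose Gieseker image $\Gies(t)$ is GIT (semi)stable with respect to the linearization of Section \ref{section-gieseker}, the associated pseudo-equivariant pair $(\f_{\cc_0}\vert_{\{t\}\times X}, \phi_{\cc_0}\vert_{\{t\}\times X}, \psi_{\cc_0}\vert_{\{t\}\times X})$ is $(\delta,\chi)$-(semi)stable. Since $m \ge m_0 \ge m_3$, this applies.

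For the converse ``$(E,\sigma,f)$ is $(\delta,\chi)$-(semi)stable $\Rightarrow$ $\Gies(t)$ is (semi)stable'', I would invoke Proposition \ref{prop-m5}: for $m \ge m_5$, if $(E,\sigma,f)$ is $(\delta,\chi)$-(semi)stable and $t \in \cc_0$ is any point representing it (such a $t$ exists by the construction of the parameter space in Section \ref{section-parameter-space-pseudo-equivariant-pairs}, via Propositions \ref{prop-twisted-equivariant-are-quotients}, \ref{prop-equivalence-of-categories-twisted-pseudo} and \ref{prop-pseudo-are-pairs}), then $\Gies(t)$ is GIT (semi)stable. Since the theorem fixes a point $t \in \cc_0$ from the outset, the pair $(E,\sigma,f) := (\f_{\cc}, \phi_{\cc}, \psi_{\cc})\vert_{\{t\}\times X}$ is represented by this very $t$, so Proposition \ref{prop-m5} applies with no ambiguity.

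There is essentially no residual difficulty at this stage, since the main obstacle, namely translating the combinatorics of $(\delta,\chi)$-(semi)stability into the Hilbert--Mumford inequality on the Gieseker space, has been absorbed into Propositions \ref{prop-m3} and \ref{prop-m5}. The only subtlety worth flagging is a consistency check: both propositions rely on the choice $\chi \gg \epsilon^{-1} \gg 0$ made at the end of Section \ref{section-gieseker}, which ensures that $\mu(\Gies(t),\lambda)\geqp 0$ for every 1-parameter subgroup $\lambda$ reduces to checking $\Gamma$-invariant 1-parameter subgroups; this is precisely the mechanism that forces the two reductions (to $\Gamma$-invariant weighted filtrations on the pair side, and to $\Gamma$-invariant 1-parameter subgroups on the Gieseker side) to match. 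Taking $m_0 = \max(m_3, m_5)$ and combining the two implications then finishes the proof.
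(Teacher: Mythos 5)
Your proposal is correct and matches the paper's intent exactly: the theorem is stated immediately after Propositions \ref{prop-m3} and \ref{prop-m5} precisely because it is their direct combination, with $m_0=\max(m_3,m_5)$ and the standing assumption $\chi\gg\delta\gg0$ (equivalently $\chi\gg\epsilon^{-1}\gg0$) making both directions apply to the same point $t\in\cc_0$.
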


\begin{corollary}\label{cor-parameter-space-semistable-twisted-equivariant}
    There exists a complex quasi-projective variety $\ccss$, together with a universal $(\theta,c)$-twisted $\Gamma$-equivariant $G$-bundle $(\mathcal Q,\action)$ over $\ccss\times X$ such that, for every semistable $(\theta,c)$-twisted $\Gamma$-equivariant $G$-bundle $(\bundle,\action)$ over $X$, there exists $t\in\ccss$ with $(\bundle,\action)\cong (\mathcal Q\vert_{\{t\}\times X},\action)$. Moreover, $\ccss$ features an algebraic $\GL(V)$-action lifting to a $\GL(V)$-action on $\mathcal Q$, such that $(\mathcal Q\vert_{\{t\}\times X},\action)\cong(\mathcal Q\vert_{\{t'\}\times X},\action)$ if and only if $t$ and $t'$ are in the same $\GL(V)$-orbit.
\end{corollary}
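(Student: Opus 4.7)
The plan is to obtain $\ccss$ as the preimage under the Gieseker morphism of the GIT semistable locus in $\GG$, and then package all the equivalences established in the preceding sections to identify the points of $\ccss$ with isomorphism classes of semistable twisted equivariant $G$-bundles.

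More concretely, fix $m \ge m_0$ and parameters $\chi \gg \delta \gg 0$ as in Theorem \ref{th-semistability-vs-hilbert-mumford}, with the linearization of the $\GL(V)$-action on $\oo_{\GG}(s_1,s_2,s_1\chi)$ from Section \ref{section-gieseker}. Since the Gieseker space $\GG$ is a product of projective spaces with a linearized $\GL(V)$-action, Mumford's GIT produces an open $\GL(V)$-invariant semistable locus $\GG^{ss} \subset \GG$. Because $\Gies : \cc \to \GG$ is a $\GL(V)$-equivariant morphism and $\cc_0 \subset \cc$ is a $\GL(V)$-invariant locally closed subvariety, I define $\ccss := \cc_0 \cap \Gies^{-1}(\GG^{ss})$. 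This is open in $\cc_0$, hence quasi-projective, and $\GL(V)$-invariant. It is also, by Theorem \ref{th-semistability-vs-hilbert-mumford}, exactly the locus of points $t \in \cc_0$ whose associated pseudo-equivariant $\rhoabc$-pair is $(\delta,\chi)$-semistable.

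Next I would construct the universal family. The restriction $(\f_{\cc_0}, \phi_{\cc_0}, \psi_{\cc_0})|_{\ccss \times X}$ is a universal family of $(\delta,\chi)$-semistable pseudo-equivariant $\rhoabc$-pairs. By Proposition \ref{prop-semistable-pseudo-vs-pairs} (applied for $\chi \gg \delta \gg 0$), Proposition \ref{prop-pseudo-are-pairs} and the definition of (semi)stability of pseudo-equivariant $G$-bundles (Definition \ref{def-semistable-pseudo-equivariant-bundle}), this family corresponds to a family of semistable pseudo-equivariant $G$-bundles. Finally the equivalence of categories of Proposition \ref{prop-equivalence-of-categories-twisted-pseudo} (which is checked there to be functorial in both directions and thus extends to families in the obvious way) lets me promote this to a family $(\mathcal Q, \action)$ of semistable $(\theta,c)$-twisted $\Gamma$-equivariant $G$-bundles over $\ccss \times X$. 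The $\GL(V)$-action on $\cc$ lifts to $\f_{\cc}$ (acting through automorphisms of $V$) and hence to each object in the chain; in particular, it lifts to $\mathcal Q$.

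To verify the universal property, let $(\bundle, \action)$ be an arbitrary semistable $(\theta,c)$-twisted $\Gamma$-equivariant $G$-bundle. Under the functors of Propositions \ref{prop-equivalence-of-categories-twisted-pseudo} and \ref{prop-pseudo-are-pairs}, it corresponds to a pseudo-equivariant $\rhoabc$-pair $(E,\sigma,f)$ whose underlying $G$-bundle is semistable. By Proposition \ref{prop-twisted-equivariant-are-quotients} there is a quotient $V(-m) \twoheadrightarrow E$ with the required properties, which produces a point of $\qq$; then the data $(h, f)$ gives a lift to $\cc_0$. By Proposition \ref{prop-semistable-pseudo-vs-pairs} $(E,\sigma,f)$ is $(\delta,\chi)$-semistable, and then by Theorem \ref{th-semistability-vs-hilbert-mumford} the image under $\Gies$ is GIT semistable, so the lift lies in $\ccss$. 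The statement about orbits follows directly from Proposition \ref{prop-iso-vs-orbit}, together with the fact that the functors above preserve isomorphism classes. The only real subtlety — which I do not expect to be a serious obstacle — is checking that the inverse functor $\mathcal B \to \mathcal A$ of Proposition \ref{prop-equivalence-of-categories-twisted-pseudo} and the inverse of the functor of Proposition \ref{prop-pseudo-are-pairs} work in families, i.e.\ that starting from the universal pseudo-equivariant $G$-bundle over $\ccss \times X$ one really obtains a bona fide family of twisted equivariant $G$-bundles; this is routine because the construction in the proof of Proposition \ref{prop-equivalence-of-categories-twisted-pseudo} is pointwise algebraic and commutes with base change.
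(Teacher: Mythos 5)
Your proposal is correct and follows essentially the same route as the paper: the paper defines $\ccss$ as the locus of $(\delta,\chi)$-semistable pseudo-equivariant pairs, invokes Theorem \ref{th-semistability-vs-hilbert-mumford} for openness (equivalently, your description as $\cc_0\cap\Gies^{-1}(\GGss)$), and then cites Propositions \ref{prop-iso-vs-orbit}, \ref{prop-semistable-pseudo-vs-pairs} and \ref{prop-equivalence-of-categories-twisted-pseudo} exactly as you do. Your additional remarks on promoting the equivalences of categories to families are a reasonable elaboration of what the paper leaves implicit.
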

\begin{proof}
    By Theorem \ref{th-semistability-vs-hilbert-mumford}, the subset $\ccss\subset\cc_0$ consisting of $(\delta,\chi)$-semistable pseudo-equivariant pairs is an open subvariety. The result follows from Propositions \ref{prop-iso-vs-orbit}, \ref{prop-semistable-pseudo-vs-pairs} and \ref{prop-equivalence-of-categories-twisted-pseudo}.
\end{proof}

\subsection{The moduli space is a projective variety}\label{section-projective}

Let $\GGss\subset\GG$ be the open subvariety of semistable points in $\GG$ in the sense of GIT, with respect to the $\GL(V)$-action and the linearization given in Section \ref{section-gieseker}. Choose $\chi\gg\delta\gg0$ and $\chi\gg\epsilon^{-1}\gg0$ so that Propositions \ref{prop-m3} and \ref{prop-m5} hold. Let $\ccss\subset\cc_0$ be the subvariety consisting of points corresponding to $(\delta,\chi)$-semistable pseudo-equivariant pairs. Pick $m_0$ as in Theorem \ref{th-semistability-vs-hilbert-mumford}. Then, for $m\ge m_0$, the Gieseker morphism restricts to an injection $\Giesss:\ccss\hookrightarrow\GGss$. 

\begin{proposition}\label{prop-properness}
    The Gieseker morphism  $\Giesss:\ccss\hookrightarrow\GGss$ is proper.
\end{proposition}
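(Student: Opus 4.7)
The plan is to verify the valuative criterion of properness. Let $R$ be a discrete valuation ring with fraction field $K$ and residue field $k$, and suppose we are given a commutative diagram
\begin{equation*}
\begin{tikzcd}
\Spec K \arrow[r,"\xi_K"]\arrow[d] & \ccss\arrow[d,"\Giesss"]\\
\Spec R \arrow[r,"\bar\xi_R"]\arrow[ur,dashed] & \GGss
\end{tikzcd}
\end{equation*}
The morphism $\xi_K$ corresponds to a $(\delta,\chi)$-semistable pseudo-equivariant pair $(E_K,\sigma_K,f_K)$ over $X\times\Spec K$, equipped with an isomorphism $V\otimes K\xrightarrow{\sim}H^0(E_K(m))$. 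My task is to produce a lift $\xi_R:\Spec R\to\ccss$, possibly after a finite extension of $R$.

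First I would extend the underlying pair. Using Schmitt's properness results for the Gieseker morphism of $\rho$-swamps (see \cite[Section 2.3.6]{schmitt}), applied to $\Gies_1\times\Gies_2$, the $\delta$-semistable $\rhoabc$-swamp $(E_K,\oo_{X_K},\sigma_K)$ extends (possibly after a finite base change) to a $\delta$-semistable $\rhoabc$-swamp $(E_R,\oo_{X_R},\sigma_R)$ over $X_R:=X\times\Spec R$. The closed conditions cutting out $\ii\subset\mathfrak B$ --- that $\sigma_R$ factors through $\sym^d(\oo_X^{\oplus n}\otimes E_R^*)^G$ and defines an honest $G$-reduction at every point --- hold generically, and the openness of the $\iso/G$-condition at the special fibre is controlled via Proposition \ref{prop-semistability-underlying-bundle} and \cite[Corollary 2.4.4.6]{schmitt}: a $\delta$-semistable degenerate swamp whose limit $G$-reduction only existed outside a divisor would contradict semistability of the underlying $G$-bundle.

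Next I would extend the pseudo-equivariant structure. The datum $\bar\xi_R$ provides an extension of the projective class $[\psi'_K,\psi''_K]\in\GG_3$ to a projective class over $\Spec R$. Choosing a lift $(\psi'_R,\psi''_R)\in V_{\Gamma,R}\oplus V_{\Gamma,R}^\vee$ that is non-zero on the special fibre, the map $\psi'_R$ descends to a family of homomorphisms $f_{R,\gamma}:E_R\to\gamma^*E_R$ because the vanishing condition defining $\cc_2$ is closed and holds on the generic fibre. The same observation applied to the conditions defining $\cc_3,\cc_4,\cc_5,\cc_6$ --- namely the associativity relation, the identity, the compatibility $\fg(\sigma)=\gamma^*\sigma\cdot(1,\gamma)^{-1}$ of (\ref{eq-pseudo-equivariant-pairs-f-sigma}), and the duality relation $\psi''=\psi'^{*-1}$ --- shows that each of these holds on $\Spec R$.

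The main obstacle is ensuring $f_{R,\gamma}$ is an isomorphism at the closed fibre, i.e.\ that the limit lands in $\cc_1$. This is the crux, and the reason for working with $\PP(V_\Gamma\oplus V_\Gamma^\vee)$ rather than $\PP(V_\Gamma)$. Suppose for contradiction that $\det\psi'_{R,\gamma}$ acquired a zero at the closed point for some $\gamma$. On $\cc_6$ we have $\psi''_R=\psi'^{*-1}_R$ generically, so if $\psi'_R(0)$ drops rank then the only way $(\psi'_R,\psi''_R)$ can remain a non-zero section over $R$ is that the projective class $[\psi'_R,\psi''_R]$ limits to one of the form $[\psi_0',\psi_0'']$ with $\psi_0'$ non-invertible. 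I would then construct an explicit 1-parameter subgroup $\lambda$ of $\GL(V)$ adapted to a $\Gamma$-invariant filtration refining $\ker\psi'_{R,\gamma}(0)$, and use the analysis of $\mu(t_3,\lambda)$ from Section \ref{section-gieseker}: the drop of rank produces a strictly negative contribution to $\mu(\Gies(t),\lambda)$ of order $\chi$, which for $\chi\gg\epsilon^{-1}\gg0$ cannot be compensated by the $s_1\mu(t_1,\lambda)+s_2\mu(t_2,\lambda)$ terms, contradicting semistability of $\bar\xi_R(0)\in\GGss$. Once $f_R$ is shown to be an isomorphism at the closed fibre, Theorem \ref{th-semistability-vs-hilbert-mumford} guarantees that the resulting point of $\cc_0$ lies in $\ccss$, completing the proof. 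The hardest technical step will be writing down this 1-parameter subgroup explicitly and converting the rank-drop into a precise estimate of $\mu(t_3,\lambda)$, but the structure of $\GG_3$ as a symmetric ``$\psi'$ versus $\psi''$'' projective bundle is precisely tailored so that such an estimate exists.
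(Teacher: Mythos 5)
Your overall skeleton matches the paper's: valuative criterion, extension of the quotient sheaf through the compactified Quot scheme and Schmitt's swamp arguments, reflexive hulls to restore local freeness at the special fibre, extension of $\sigma$, and closedness of the conditions defining $\cc_2,\dots,\cc_5$ to propagate the relations to the closed point. The gap is in the step you yourself call the crux: showing that the limiting maps $f_{0,\gamma}$ are isomorphisms. Your proposed mechanism --- that a rank drop of $\psi'_{0,\gamma}$ forces some $1$-parameter subgroup $\lambda$ to give a negative contribution $\mu(t_3,\lambda)<0$ of order $\chi$, contradicting semistability of the limit in $\GGss$ --- does not go through. Since $\mu(t_3,\lambda)=\max\bigl(\mu(\Psi'_0,\lambda),\mu(\Psi''_0,\lambda)\bigr)$, destabilizing via $\GG_3$ requires \emph{both} components to carry only strictly positive weights for the same $\lambda$, and degenerate limits typically retain a weight-zero component. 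Concretely, with $\dim V=2$ and a single $\gamma$, the family $[\,\mathrm{diag}(1,t)\oplus\mathrm{diag}(1,t^{-1})\,]$ limits in $\PP(V_{\Gamma}\oplus V_{\Gamma}^\vee)$ to $[\,0\oplus\mathrm{diag}(0,1)\,]$ as $t\to 0$; because $\mathrm{diag}(0,1)$ has non-zero trace, its weight decomposition under any conjugation $1$-parameter subgroup contains the weight $0$, so $\mu(t_3,\lambda)\ge 0$ for every $\lambda$. Thus GIT semistability of the Gieseker limit point by itself places no obstruction on such a degeneration, and the contradiction you announce never materializes.

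What does forbid the degeneration --- and this is the paper's argument --- is an ingredient you already list but do not exploit: once the closed conditions defining $\cc_3$ and $\cc_4$ are known to hold at the special fibre, one gets
\begin{equation*}
(\gamma^{*(|\gamma|-1)}\tf_{0,\gamma})\circ\cdots\circ(\gamma^{*}\tf_{0,\gamma})\circ\tf_{0,\gamma}=\tf_{0,1}=\id,
\end{equation*}
so $\tf_{0,\gamma}$ admits a left inverse and, being a homomorphism of vector bundles of equal rank over $X$, is an isomorphism; equivalently, the cocycle identity forces $\det f_{C,\gamma}$ to be a root of unity, so it cannot vanish in the limit. (Note in passing that my $\mathrm{diag}(1,t)$ family above violates this identity already on the generic fibre, which is exactly why it never arises from $\cc_0$.) Replacing your $1$-parameter-subgroup argument by this elementary observation, and then checking $(\delta,\chi)$-semistability of the limit as in Schmitt, repairs the proof; the remainder of your outline is consistent with the paper's.
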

\begin{proof}
    We use the valuative criterion for properness. Let $R$ be a discrete valuation ring with quotient field $K$ and set $C:=\Spec(R)$ and $C^*:=\Spec(K)$. Let $\eta:C\to\GGss$ be a morphism such that the restriction $\eta\vert_{C^*}:C^*\to\GGss$ lifts to a morphism $\eta^*:C^*\to\ccss$. We have to show that $\eta^*$ extends to a morphism $C\to\ccss$ lifting $\eta$. 

    Since $\ccss$ is a parameter space for the set of $(\delta,\chi)$-semistable pseudo-equivariant pairs --- see Section \ref{section-parameter-space-pseudo-equivariant-pairs} ---, the morphism  $\eta^*$ is associated to a family of $(\delta,\chi)$-semistable pseudo-equivariant pairs
    \begin{equation*}
        (q_{C^*}:V\otimes\pi_X^*(\oo_X(-m))\otimes\pi_{C^*}^*\oo_{C^*}\to E_{C^*},\sigma_{C^*},f_{C^*}).
    \end{equation*}
    Here $E_{C^*}\to C^*\times X$ is a family of vector bundles of rank $n$ and trivial determinant, $\sigma_{C^*}$ is a homomorphism
    \begin{equation*}
        \sigma_{C^*}:(E_{C^*})_{a,b}=(E^{\otimes a})^{\oplus b}\to(\det E)^{\otimes c}\otimes\pi_X^*\oo_X,
    \end{equation*}
    and $(\id_{C^*}\times\gamma)^*E_{C^*}\}_{\gamma\in\Gamma}$ is a family of isomorphisms satisfying \ref{eq-pseudo-equivariant-composition-f} and \ref{eq-pseudo-equivariant-pairs-f-sigma}. 
    
    Recall that the Quot Scheme $\qq$ has a natural compactification $\oqq$ parametrizing coherent sheaves of rank $n$. Therefore, $q_{C^*}$ extends to a family $q_C:V\otimes\pi_X^*(\oo_X(-m))\to E_{C}$, where the restriction $E_C\vert_{\{0\}\times X}$ to the special fibre may have torsion at a subset of points $Z\subset X$. However, note that $\widetilde E_C:=E_C^{**}$, is a family of locally-free sheaves, since $\widetilde E_C$ is a reflexive sheaf over a the regular two-dimensional scheme $C\times X$ --- see \cite[Corollaries 1.3 and 1.4]{reflexive}. 
    
    By the proof of \cite[Proposition 2.3.5.17]{schmitt}, $\sigma_{C^*}$ extends to a family of homomorphisms $\sigma_C:(E_{C})_{a,b,c}\to\pi_X^*\oo_X$. As in such proof, let $\iota:\cxz\hookrightarrow C\times X$ be the open embedding and let
    \begin{align*}
        \tsigma_{C}:(\tE_{C})_{a,b}\to\iota_*\left((\tE_{C})_{a,b}\vert_{\cxz}\right)=
        \iota_*\left((E_{C})_{a,b}\vert_{\cxz}\right)\\
        \xrightarrow{\iota_*(\sigma_C\vert_{\cxz})}
        \iota_*\left(\det(E_C\vert_{\cxz})^{\otimes c}\otimes\pi_X^*\oo_X\vert_{\cxz}\right)=
        \det(\tE_C)^{\otimes c}\otimes\pi_X^*\oo_X.
    \end{align*}
    
    Consider the projection $\eta_3:C\to\GG_3=\PP(V_{\Gamma}\oplus V^\vee_{\Gamma})$ to the third component of $\GG$. \textbf{Claim}: the point $\eta_3(0)\in\PP(V_{\Gamma}\oplus V^\vee_{\Gamma})$ is equal to the class of $\bigoplus_{\gamma\in\Gamma}f_{0,\gamma}\oplus \bigoplus_{\gamma\in\Gamma}f^{*-1}_{0,\gamma}$ in projective space, where $f_0$ is a family of isomorphisms satisfying (\ref{eq-pseudo-equivariant-composition-f}) and (\ref{eq-pseudo-equivariant-pairs-f-sigma}) --- recall the notation (\ref{eq-*-1}). Indeed, $\eta_3(C^*)$ is equal to the image of the family $\bigoplus_{\gamma\in\Gamma}f_{C^*,\gamma}\oplus \bigoplus_{\gamma\in\Gamma}f^{*-1}_{C^*,\gamma}$ in projective space, which we denote by $\left[\bigoplus_{\gamma\in\Gamma}f_{C^*,\gamma}\oplus \bigoplus_{\gamma\in\Gamma}f^{*-1}_{C^*,\gamma}\right]$. This has an extension to a family $\left[\bigoplus_{\gamma\in\Gamma}f_{C,\gamma}\oplus \bigoplus_{\gamma\in\Gamma}f^{*-1}_{C,\gamma}\right]$, where $f_{0,\gamma}$ may not be an isomorphism for some $\gamma$. This, in turn, induces a family $\left[\bigoplus_{\gamma\in\Gamma}\tf_{C,\gamma}\oplus \bigoplus_{\gamma\in\Gamma}\tf^{*-1}_{C,\gamma}\right]$, where $\tf_{C,\gamma}:\tE_C\to(\id_C\times\gamma)^*\tE_C$. 
    
    Pick $\gamma\in\Gamma$. Since (\ref{eq-pseudo-equivariant-composition-f}) is a closed condition,
    \begin{equation*}
    (\gamma^{*(\vert\gamma\vert-1)}\tf_{0,\gamma})\circ (\gamma^{*(\vert\gamma\vert-2)}\tf_{0,\gamma})\circ\dots\circ(\gamma^*\tf_{0,\gamma})\circ\tf_{0,\gamma}=\tf_{0,1}=\id.    
    \end{equation*}
    In particular, $\tf_{0,\gamma}$ is an isomorphism for every $\gamma\in\Gamma$. Since (\ref{eq-pseudo-equivariant-pairs-f-sigma}) is a closed condition, the claim is proved.
    

    Finally, as in the proof of \cite[Proposition 2.3.5.17]{schmitt}, one can check that $(\tE_0,\tsigma_0,\tf_0)$ is a $(\delta,\chi)$-semistable pseudo-equivariant pair. Since its image in $\GGss$ is equal to $\eta(0)$, it determines a extension $\eta^*:C\to\ccss$ lifting $\eta$, as required.
\end{proof}

\subsection{S-equivalence vs polystability}\label{section-polystable}

Recall that the GIT quotient $\ccss\sslash\GL(V)$ parametrizes S-equivalence classes of semistable $(\theta,c)$-twisted $\Gamma$-equivariant $G$-bundles over $X$. Here, two points of $\ccss$ are said to be \textbf{S-equivalent} if the closure of their $\GL(V)$-orbits intersect. In the closure of each orbit there is a unique closed orbit, so we may also see $\ccss\sslash\GL(V)$ as the moduli space of isomorphism classes of $(\theta,c)$-twisted $\Gamma$-equivariant $G$-bundles over $X$ whose corresponding $\GL(V)$-orbit is closed. The goal of this section is to show that these are precisely the polystable ones, according to Definition \ref{def-semistable-twisted-equivariant}.

\begin{lemma}\label{lemma-mu=0}
    Assume that $\chi\gg\delta\gg0$, as usual. Let $t\in\ccss$ with corresponding pseudo-equivariant pair $(E_t,\sigma_t,f_t)$. Let $\lambda:\C^*\to\GL(V)$ be a non-trivial 1-parameter subgroup with corresponding weighted filtration $(\ebt,\alb)$. Then, $\mu(\Gies(t),\lambda)=0$ --- according to the linearization given by $\oo(s_1,s_2,s_1\chi)$, see Section \ref{section-gieseker} --- if and only if the corresponding weighted filtration $(\ebt,\alb)$ satisfies $M(\ebt,\alb)=\mu(\ebt,\alb,\sigma_t)=\mu(\ebt,\alb,f_t)=0$.
\end{lemma}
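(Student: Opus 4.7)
My plan is to decompose $\mu(\Gies(t),\lambda)$ according to the chosen linearization $\oo_{\GG}(s_1,s_2,s_1\chi)$ of Section~\ref{section-gieseker},
\begin{equation*}
    \mu(\Gies(t),\lambda)=s_1\mu(t_1,\lambda)+s_2\mu(t_2,\lambda)+s_1\chi\,\mu(t_3,\lambda),
\end{equation*}
and then to analyse each summand separately, finishing with a dominance argument in $\chi$ and $\delta$. First, for the $\GG_3$-component, the hypothesis $t\in\cc_6$ forces the tautological section at $t$ to have the form $[\Psi'_t\oplus\Psi'^{*-1}_t]$; the computation at the end of Section~\ref{section-gieseker} (the linearised analogue of Lemma~\ref{lemma-mu-f-ge-0}) then gives $\mu(t_3,\lambda)\ge 0$, vanishing iff $\lambda$ is $\Gamma$-invariant for the $\Gamma$-structure on $V\otimes\oo_X$ induced by $\Psi'_t$. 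Under the canonical identification $V\cong H^0(E_t(m))$ this is precisely the condition $\mu(\ebt,\alb,f_t)=0$, and a direct weight count sharpens the inequality to a proportionality $\mu(t_3,\lambda)=c_3\,\mu(\ebt,\alb,f_t)$ with $c_3>0$.

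Next, for the $\GG_1$ and $\GG_2$ components I would invoke the standard Gieseker-type calculations already underlying the proofs of Propositions~\ref{prop-m3} and~\ref{prop-m5} (following \cite[Thms.~2.3.5.13 and 2.3.5.16]{schmitt}). With the calibration $\epsilon=s_1/s_2$ from \eqref{eq-def-epsilon} and $m\ge m_0$ large enough that the filtration $V_\bullet$ defining $\lambda$ can be assumed adapted (meaning $V_j=H^0(E_{t,j}(m))$, otherwise $\lambda$ is replaced by the adapted 1-parameter subgroup producing the same $(\ebt,\alb)$), these calculations yield the exact identity
\begin{equation*}
    s_1\mu(t_1,\lambda)+s_2\mu(t_2,\lambda)=c_1\bigl(M(\ebt,\alb)+\delta\,\mu(\ebt,\alb,\sigma_t)\bigr),\qquad c_1>0.
\end{equation*}
Combining with the previous paragraph gives
\begin{equation*}
    \mu(\Gies(t),\lambda)=c_1\bigl(M(\ebt,\alb)+\delta\,\mu(\ebt,\alb,\sigma_t)\bigr)+c_3 s_1\chi\,\mu(\ebt,\alb,f_t).
\end{equation*}

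The $(\Leftarrow)$ direction follows at once. For $(\Rightarrow)$, the fact that $t\in\ccss$ makes $(E_t,\sigma_t,f_t)$ a $(\delta,\chi)$-semistable pseudo-equivariant pair; by Lemma~\ref{lemma-semistable-iff-asymptotically-semistable} it is also asymptotically semistable, so $\mu(\ebt,\alb,\sigma_t),\mu(\ebt,\alb,f_t)\ge 0$. After rescaling $\lambda$ by a positive integer I may assume $(\rk\ebt,\alb)$ lies in the finite set $S$ of Lemma~\ref{lemma-delta,chi-semistable-finite-set}; Lemma~\ref{lemma-m1} then produces a uniform lower bound on $M(\ebt,\alb)$, while $\mu(\ebt,\alb,\sigma_t)$ and $\mu(\ebt,\alb,f_t)$ take values in a fixed discrete subset of $\Q_{\ge 0}$. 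The hypothesis $\chi\gg\delta\gg 0$ then successively forces $\mu(\ebt,\alb,f_t)=0$, then $\mu(\ebt,\alb,\sigma_t)=0$, and finally $M(\ebt,\alb)=0$.

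The main technical obstacle will be upgrading the second displayed formula from the inequality used in Schmitt's semistability tests to an exact equality, since the $(\Rightarrow)$ argument relies on the full identity rather than a one-sided bound. This reduces to verifying that the adapted choice $V_j=H^0(E_{t,j}(m))$ yields equality on the nose, and that the reduction from a general $\lambda$ to its adapted counterpart preserves $(\ebt,\alb)$ without spoiling the strictly semistable condition $\mu(\Gies(t),\lambda)=0$.
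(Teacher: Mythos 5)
Your overall architecture matches the paper's: decompose $\mu(\Gies(t),\lambda)=s_1\mu(t_1,\lambda)+s_2\mu(t_2,\lambda)+s_1\chi\mu(t_3,\lambda)$, identify the vanishing of the $\GG_3$-term with $\Gamma$-invariance of the induced filtration, relate the $\GG_1,\GG_2$-terms to $M(\ebt,\alb)+\delta\mu(\ebt,\alb,\sigma_t)$, and use $\chi\gg\delta\gg0$ to separate the summands. However, there are two genuine gaps. First, the step ``after rescaling $\lambda$ by a positive integer I may assume $(\rk\ebt,\alb)$ lies in the finite set $S$'' is false: rescaling multiplies all weights by a single constant and cannot convert an arbitrary weighted filtration into one of the finitely many types in $S$. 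Without that reduction you have no uniform lower bound on $M(\ebt,\alb)$ and no uniform positive lower bound on the nonzero values of $\mu(t_3,\lambda)$ and $\mu(\ebt,\alb,\sigma_t)$, so the ``successively forces'' argument collapses already at its first step (deducing $\mu(t_3,\lambda)=0$). The correct tool --- and what the paper uses --- is the convex-cone decomposition from the proof of Lemma \ref{lemma-delta,chi-semistable-finite-set}: write $\mu(\Gies(t),\lambda)=\sum_{\lambda'\in\Lambda}l_{\lambda'}\,\mu(\Gies(t),g\lambda'g^{-1})$ with $l_{\lambda'}\ge0$ over a finite set $\Lambda$ of 1-parameter subgroups, use GIT-semistability of $\Gies(t)$ to force each contributing summand to vanish, and only then apply the uniform bounds (which do hold on $\Lambda$) to kill $\mu(t_3,\cdot)$; a second decomposition of the same kind, combined with $(\delta,\chi)$-semistability of the pair and $\delta\gg0$, is then needed to split $M(\ebt,\alb)+\delta\mu(\ebt,\alb,\sigma_t)=0$ into the two separate vanishings.

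Second, the exact identity $s_1\mu(t_1,\lambda)+s_2\mu(t_2,\lambda)=c_1\bigl(M(\ebt,\alb)+\delta\mu(\ebt,\alb,\sigma_t)\bigr)$ does not hold for a general 1-parameter subgroup of $\GL(V)$: for a non-adapted filtration one only has $\dim V_j\le h^0(E_{t,j}(m))$, and this discrepancy enters the Gieseker-side weight count with a definite sign, which is precisely why Schmitt's Theorems 2.3.5.13 and 2.3.5.16 are two separate one-sided implications rather than a single identity. You flag this as the main obstacle, but the proposed fix --- replacing $\lambda$ by its adapted counterpart --- is not available here: that replacement can change the value of $\mu(\Gies(t),\lambda)$, and the lemma is a statement about the given $\lambda$. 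The paper sidesteps the identity entirely by using only the two implications ``$s_1\mu(t_1,\lambda)+s_2\mu(t_2,\lambda)=0\Rightarrow M(\ebt,\alb)+\delta\mu(\ebt,\alb,\sigma_t)=0$'' in the forward direction and its converse in the backward direction, each being the equality case of one of Schmitt's inequalities.
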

\begin{proof}

    Let $\Gies(t)=(t_1,t_2,t_3)\in\GG=\GG_1\times\GG_2\times\GG_3$. Recall from Section \ref{section-gieseker} that
    \begin{equation*}
        \mu(\Gies(t),\lambda)=s_1\mu(t_1,\lambda)+s_2\mu(t_2,\lambda)+s_1\chi\mu(t_3,\lambda),
    \end{equation*}
    where $s_1$ and $s_2$ satisfy (\ref{eq-def-epsilon}) and $\chi\gg\delta\gg 0$. 

    First assume that $\mu(\Gies(t),\lambda)=0$. As in the proof of Lemma \ref{lemma-delta,chi-semistable-finite-set}, there exists a finite set of weights $\Lambda$ --- independent of $t$ --- and $g\in\GL(V)$ --- depending on $t$ --- such that
    \begin{equation}\label{eq-decomposition-mu-lambda}
        \mu(\Gies(t),\lambda)
        =\sum_{\lambda'\in\Lambda}l_{\lambda'}\mu(\Gies(t),g\lambda' g^{-1})
        ,\nonumber
    \end{equation}
    for some $l_{\lambda'}\ge0$. Since $\Gies(t)$ is semistable, $\mu(\Gies(t),g\lambda' g^{-1})\ge0$ for every $\lambda'\in\Lambda$. In particular, for each $\lambda'\in\Lambda$, either $l_{\lambda'}=0$ or
    \begin{equation*}
        \mu(\Gies(t),g\lambda' g^{-1})=s_1\mu( t_1,g\lambda' g^{-1})+s_2\mu( t_2,g\lambda' g^{-1})+s_1\chi\mu( t_3,g\lambda' g^{-1})=0.
    \end{equation*}
    Since $\chi\gg\delta\gg0$, i.e. $\chi\gg\epsilon^{-1}\gg0$, by (\ref{eq-def-epsilon}) this implies that $\mu( t_3,g\lambda' g^{-1})=s_1\mu( t_1,g\lambda' g^{-1})+s_2\mu( t_2,g\lambda' g^{-1})=0$. Therefore, plugging this back into (\ref{eq-decomposition-mu-lambda}),
    \begin{equation*}
        \mu( t_3,\lambda )=0=s_1\mu( t_1,\lambda )+s_2\mu( t_2,\lambda).
    \end{equation*}
    Because of the first equation, the associated filtration $V_{\bullet}$ is $\Gamma$-invariant, and so is the corresponding filtration $\ebt$, which implies that $\mu(\ebt,\alb,f_t)=
0$. 

We claim that $s_1\mu(t_1,\lambda)+s_2\mu(t_2,\lambda)=0$ implies that $M(\ebt,\alb)+\delta\mu(\ebt,\alb,\sigma_t)=
0$. This is just the same calculation as for the moduli space of principal $G$-bundles with no twisted equivariant structure. For example, use a similar calculation to the one in the proof of \cite[Theorem 2.3.5.13]{schmitt} after the statement of \cite[Corollary 2.3.5.14]{schmitt}. 

Next, using Lemma \ref{lemma-delta,chi-semistable-finite-set}, we may find a finite set $S$ and a decomposition
\begin{equation*}
    M(\ebt,\alb)+\delta\mu(\ebt,\alb,\sigma_t)=\sum_{(\rk\eb',\rk\alb')\in S}l_{(\eb',\alb')}\left(M(\ebt',\alb')+\delta\mu(\ebt',\alb',\sigma_t)\right),
\end{equation*}
with $l_{(\eb',\alb')}\ge 0$ and only finitely many $l_{(\eb',\alb')}> 0$. Since $(E_t,\sigma_t,f_t)$ is semistable, either $l_{(\eb',\alb')}=0$ or $M(\ebt',\alb')+\delta\mu(\ebt',\alb',\sigma_t)=0$. Therefore, for $\delta\gg0$, either $l_{(\eb',\alb')}=0$ or $M(\ebt',\alb')=\mu(\ebt',\alb',\sigma_t)=0$. This, in turn, implies that
\begin{equation*}
    M(\ebt,\alb)=\mu(\ebt,\alb,\sigma_t)=0,
\end{equation*}
as required.

Now assume that $M(\ebt,\alb)=\mu(\ebt,\alb,\sigma_t)=\mu(\ebt,\alb,f_t)=
0$. In particular $\ebt$ is $\Gamma$-invariant, and so is the associated filtration $V_{\bullet}$, which in turn yields $\mu(t_3,\lambda)=0$. Hence, we have to show that $M(\ebt,\alb)+\delta\mu(\ebt,\alb,\sigma_t)=
0$ implies $s_1\mu(t_1,\lambda)+s_2\mu(t_2,\lambda)=0$. This is the same calculation as for the moduli space of principal $G$-bundles with no twisted equivariant structure. For example, use a similar calculation to the one in the proof of \cite[Theorem 2.3.5.16]{schmitt}.
\end{proof}

\begin{proposition}\label{prop-polystability-vs-S-equivalence}
    A semistable $(\theta,c)$-twisted $\Gamma$-equivariant $G$-bundle $(\bundle,\action)$ over $X$ is poly-stable if and only if the corresponding point of $\ccss$ has closed $\GL(V)$-orbit. 
\end{proposition}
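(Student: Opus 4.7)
The plan is to apply the Hilbert--Mumford criterion on $\GGss$ via the proper embedding $\Giesss\colon\ccss\hookrightarrow\GGss$ (Proposition \ref{prop-properness}), using Lemma \ref{lemma-mu=0} as the bridge between GIT-theoretic weights on the Gieseker space and the intrinsic invariants $M(\ebt,\alb)$, $\mu(\ebt,\alb,\sigma_t)$, $\mu(\ebt,\alb,f_t)$ of the pseudo-equivariant pair $(E_t,\sigma_t,f_t)$ associated to the point $t\in\ccss$ representing $(\bundle,\action)$. In both directions the task reduces to matching a $\Gamma$-invariant reduction of $\bundle$ to a parabolic of degree zero with a 1-parameter subgroup $\lambda\colon\C^*\to\GL(V)$ having $\mu(\Gies(t),\lambda)=0$, and matching the further reduction to the Levi with the limit $\lim_{z\to 0}\lambda(z)\cdot t$ lying in the orbit of $t$.

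For the forward implication, assume $(\bundle,\action)$ is polystable and suppose, towards a contradiction, that $\GL(V)\cdot t$ is not closed. Since $\Gies(t)\in\GGss$ is GIT-semistable, Hilbert--Mumford yields a non-trivial 1-parameter subgroup $\lambda\colon\C^*\to\GL(V)$ such that $\lim_{z\to 0}\lambda(z)\cdot\Gies(t)$ exists in $\GGss$ and lies outside $\GL(V)\cdot\Gies(t)$, with $\mu(\Gies(t),\lambda)=0$; by properness this lifts to a limit $t_0\in\ccss\setminus\GL(V)\cdot t$. Lemma \ref{lemma-mu=0} yields $M(\ebt,\alb)=\mu(\ebt,\alb,\sigma_t)=\mu(\ebt,\alb,f_t)=0$ for the weighted filtration $(\ebt,\alb)$ attached to $\lambda$. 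By Lemma \ref{lemma-mu-f-ge-0}, $\ebt$ is $\Gamma$-invariant, and the argument from the first half of the proof of Lemma \ref{lemma-semistable-iff-asymptotically-semistable} produces $s\in i\lie k^{\Gamma}$ and a $\Gamma$-invariant reduction $\tau\in H^0(X,\bundle(G/P_s))^{\Gamma}$ with $\deg\bundle(\tau,s)=M(\ebt,\alb)=0$. Polystability then supplies a further $\Gamma$-invariant reduction of $\bundle_{\tau}$ to $L_s$, which amounts to a splitting of $\ebt$ into a direct sum of graded pieces compatible with $\alb$, $\sigma_t$ and $f_t$. Using the isomorphism $V\cong H^0(E_t(m))$ built into $\ccss$, this splitting is realized by an element $g\in\GL(V)$ such that $g\cdot t_0=t$, contradicting $t_0\notin\GL(V)\cdot t$.

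For the converse, assume that $\GL(V)\cdot t$ is closed and fix $s\in i\lie k^{\Gamma}$ together with a $\Gamma$-invariant reduction $\tau\in H^0(X,\bundle(G/P_s))^{\Gamma}$ satisfying $\deg\bundle(\tau,s)=0$. As in the first half of the proof of Lemma \ref{lemma-semistable-iff-asymptotically-semistable}, $\tau$ and $s$ produce a $\Gamma$-invariant weighted filtration $(\ebt,\alb)$ of $E_t$ with $M(\ebt,\alb)=\mu(\ebt,\alb,\sigma_t)=\mu(\ebt,\alb,f_t)=0$ and a 1-parameter subgroup $\lambda\colon\C^*\to\GL(V)$ inducing it. By Lemma \ref{lemma-mu=0}, $\mu(\Gies(t),\lambda)=0$, hence the limit $\lim_{z\to 0}\lambda(z)\cdot\Gies(t)\in\GGss$ exists, and by Proposition \ref{prop-properness} it lifts to $t_0:=\lim_{z\to 0}\lambda(z)\cdot t\in\ccss$. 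Closedness of the orbit forces $t_0\in\GL(V)\cdot t$, and since $t_0$ is fixed by $\lambda$, the pseudo-equivariant pair at $t_0$ splits as a direct sum along $\ebt$ compatibly with all the structure. Transporting this splitting by the element of $\GL(V)$ identifying $t_0$ with $t$ yields the required $\Gamma$-invariant reduction of $\bundle_{\tau}$ to $L_s$, establishing polystability.

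The main obstacle is the bookkeeping translating between (i) $\Gamma$-invariant parabolic reductions of the twisted equivariant $G$-bundle and $\lambda$-weighted filtrations of $E_t$ that are $\Gamma$-invariant, and (ii) $\Gamma$-invariant reductions to the Levi and $\lambda$-fixed splittings at the level of the pseudo-equivariant pair, all while ensuring that these objects lift coherently to the level of $V$ and $\GL(V)$. Running this through the pseudo-equivariant pair formalism requires checking that the compatibility conditions (\ref{eq-pseudo-equivariant-composition-f}) and (\ref{eq-pseudo-equivariant-pairs-f-sigma}) are preserved under the splittings and under the identifications induced by elements of $\GL(V)$, which is where Lemma \ref{lemma-mu-f-ge-0} and the explicit definition of $\ccss$ in Section \ref{section-parameter-space-pseudo-equivariant-pairs} do most of the heavy lifting.
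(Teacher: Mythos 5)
Your proposal is correct and follows essentially the same route as the paper's proof: the Hilbert--Mumford criterion on the Gieseker space, Lemma \ref{lemma-mu=0} to translate $\mu(\Gies(t),\lambda)=0$ into $M(\ebt,\alb)=\mu(\ebt,\alb,\sigma_t)=\mu(\ebt,\alb,f_t)=0$, the dictionary between $\Gamma$-invariant degree-zero parabolic reductions and $\Gamma$-invariant weighted filtrations, and the identification of $\lim\lambda(z)\cdot t$ with the Levi-associated graded object, together with Propositions \ref{prop-properness} and \ref{prop-iso-vs-orbit}. The only step you gloss over is that in the converse direction the filtration $\ebt$ must be lifted to a filtration of $V$ via $h^1(E_j(m))=0$ and global generation (as in Lemma \ref{lemma-m2}), which is the same one-line technicality the paper invokes.
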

\begin{proof}
    Let $t\in\ccss$ be the point corresponding to $(\bundle,\action)$, with corresponding pseudo-equivariant pair $(E_t,\sigma_t,f_t)$. For each $t'\in\overline{\GL(V)\cdot t}\subset\ccss$, denote the corresponding pseudo-equivariant pair by $(E_{t'},\sigma_{t'},f_{t'})$. 
    By Proposition \ref{prop-iso-vs-orbit}, we have to show that $(E_t,\sigma_t,f_t)$ is polystable if and only if $(E_t,\sigma_t,f_t)\cong (E_{t'},\sigma_{t'},f_{t'})$ for every $t'\in\overline{\GL(V)\cdot t}$.
    
    By the Hilbert--Mumford criterion, there exists a non-trivial 1-parameter subgroup $\lambda:\C^*\to\GL(V)$ such that $\lim_{z\to\infty}\lambda(z)\cdot t\in\GL(V)\cdot t'$ and $\mu(\Gies(t),\lambda)=0$, where $\Gies$ is the Gieseker map --- see Section \ref{section-gieseker}. Let $(\ebt,\alb)$ be the weighted filtration determined by $\lambda$. By Lemma \ref{lemma-mu=0}, $M(\ebt,\alb)=\mu(\ebt,\alb,\sigma_t)=\mu(\ebt,\alb,f_t)=
0$. Since $\chi\gg\delta\gg0$, $\mu(\ebt,\alb,f_t)=\mu(\ebt,\alb,\sigma_t)=M(\ebt,\alb)=0$. In particular, $\eb$ is $\Gamma$-invariant and the $\C^*$-action on $E_t$ given by $\lambda$ respects the reduction of $\hhom(\oo^{\oplus n},E)$ to $G$ given by $\sigma_t$. 

Therefore, $\lambda$ defines a non-trivial 1-parameter subgroup $\olambda$ of a $\Gamma$-invariant maximal torus $T$ of $G$ which, after possibly replacing $t$ with another element in its $\GL(V)$-orbit, we may assume to be independent of $\lambda$. Since $\lambda$ is $\Gamma$-invariant, so is $\olambda$. Therefore, $\olambda$ is generated by some $s\in i\lie t^{\Gamma}$, which in turn defines parabolic subgroup $P_s\subset G$ and $\tP_s\subset\GL(n,\C)$. 
The decomposition of $E$ into weight subspaces with respect to the $\C^*$-action given by $\lambda$ defines the weighted filtration $(\eb,\alb)$, which in turn determines a reduction of structure group $\ttau\in H^0\left(X,\hhom(\oo^{\oplus n},E)(\GL(n,\C)/\tP_s)\right)$ of the frame bundle. This, in turn, determines a reduction $\tau\in H^0(X,\bundle(G/P_s))$. Moreover, $\deg E(\tau,s)=M(\ebt,\alb)=0$. 

It can be seen that $\lim_{z\to\infty}\lambda(z)\cdot E_t=\bigoplus_{j}E_j/E_{j-1}$, with induced $G$-bundle structure $\lim_{z\to\infty}\lambda(z)\cdot \sigma_t$ and equivariant structure $\lim_{z\to\infty}\lambda(z)\cdot f_t$. The frame bundle of $\lim_{z\to\infty}\lambda(z)\cdot E_t$ is equal to $\hhom(\oo^{\oplus n},E)_{\ttau}(\tL_s)$, where $\hhom(\oo^{\oplus n},E)_{\ttau}$ is the $\tP_s$-bundle over $X$ associated to $\ttau$, and $\tL_s$ is the Levi subgroup of $\GL(n,\C)$ determined by $s$. Therefore, the associated $G$-bundle by $\lim_{z\to\infty}\lambda(z)\cdot \sigma_t$ is equal to the extension of structure group of $Q_{\tau}(L_s)$ to $G$. 

We conclude that $(E_t,\sigma_t,f_t)\cong (E_{t'},\sigma_{t'},f_{t'})$ holds if and only if there is a $\Gamma$-equivariant isomorphism $Q_{\tau}(L_s)\cong Q$, which is true if and only if $Q_{\tau}$ admits a $\Gamma$-invariant reduction of structure group to $L_s$ --- this last equivalence can be shown using Proposition \ref{prop-iso-vs-orbit}.
From this it is clear that, if $(Q,\action)$ is polystable, then $(E_t,\sigma_t,f_t)\cong (E_{t'},\sigma_{t'},f_{t'})$ for every $t'\in\overline{\GL(V)\cdot t}$. 

Conversely, assume that $(E_t,\sigma_t,f_t)\cong (E_{t'},\sigma_{t'},f_{t'})$ for every $t'\in\overline{\GL(V)\cdot t}$. Take $s\in i\lie t^{\Gamma}\setminus \{0\}$ and $\tau\in H^0(X,\bundle(G/P_s))^{\Gamma}$ such that $\deg E(\tau,s)=0$ --- by Definition \ref{def-semistable-twisted-equivariant}, we have to show that there is a further reduction of structure group of $\bundle_{\tau}$ to $L_s$. The data $\tau, s$ induces a weighted filtration $(\ebt,\alb)$. As in Lemma \ref{lemma-m2}, we may assume that $h^1(X,E_j(m))=0$ and $E_j(m)$ is globally generated for every $j$, therefore $(\ebt,\alb)$ is determined by a non-trivial 1-parameter subgroup $\lambda$ of $\GL(V)$, namely the one associated with the weighted filtration $\left(H^0(q_{\cc}\otimes\id_{\pi_X^*\oo_X(m)})^{-1}(H^0(\ebt(m)),\alb\right)$ of $V$. Set $t':=\lim_{z\to\infty}\lambda(z)\cdot t$. Then, by the first sentence of the previous paragraph, $(E_t,\sigma_t,f_t)\cong (E_{t'},\sigma_{t'},f_{t'})$ implies that $Q_{\tau}$ admits a reduction of structure group to $L_s$, as required.
\end{proof}

\subsection{The moduli space}
\begin{theorem}\label{th-moduli-space-G-semisimple}
    Assume that $G$ is a connected semisimple complex Lie group. Then there exists a complex projective variety $\Mm(X,G,\Gamma,\theta,c)$ which is a coarse moduli space classifying isomorphism classes of polystable $(\theta,c)$-twisted $\Gamma$-equivariant $G$-bundles over $X$. It contains an open subvariety classifying isomorphism classes of stable $(\theta,c)$-twisted $\Gamma$-equivariant $G$-bundles over $X$.
\end{theorem}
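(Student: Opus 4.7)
The plan is to set $\Mm(X,G,\Gamma,\theta,c) := \ccss \sslash \GL(V)$, where the GIT quotient is taken with respect to the $\GL(V)$-linearization on $\cc$ pulled back via the Gieseker morphism $\Gies$ from $\oo_{\GG}(s_1,s_2,s_1\chi)$ described in Section \ref{section-gieseker}, with the parameters fixed to satisfy $m\ge m_0$, $\chi\gg\delta\gg0$ and $\chi\gg\epsilon^{-1}\gg0$ (so that Theorem \ref{th-semistability-vs-hilbert-mumford}, Proposition \ref{prop-m3}, Proposition \ref{prop-m5} and Lemma \ref{lemma-mu=0} all apply simultaneously). All the essential work has been done in the previous subsections; what remains is to assemble the pieces.

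First I would show that $\Mm(X,G,\Gamma,\theta,c)$ is a complex projective variety. By Theorem \ref{th-semistability-vs-hilbert-mumford}, the Gieseker morphism restricts to an injective $\GL(V)$-equivariant morphism $\Giesss:\ccss\hookrightarrow\GGss$, and by Proposition \ref{prop-properness} this morphism is proper; combined with injectivity, $\Giesss$ is a closed embedding. Since $\GG$ is a complex projective space with an ample $\GL(V)$-linearized line bundle, Mumford's GIT \cite{mumford-GIT} gives a complex projective variety $\GGss\sslash\GL(V)$. The image $\Giesss(\ccss)$ is a closed $\GL(V)$-invariant subvariety of $\GGss$, so it descends to a closed subvariety of $\GGss\sslash\GL(V)$, which we identify with $\Mm(X,G,\Gamma,\theta,c)$; this is therefore a complex projective variety.

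Second I would identify the closed points of $\Mm(X,G,\Gamma,\theta,c)$ with isomorphism classes of polystable $(\theta,c)$-twisted $\Gamma$-equivariant $G$-bundles. By standard GIT, points of $\ccss\sslash\GL(V)$ correspond bijectively to closed $\GL(V)$-orbits in $\ccss$; by Proposition \ref{prop-polystability-vs-S-equivalence} these are exactly the orbits of polystable objects, by Proposition \ref{prop-iso-vs-orbit} two such orbits correspond to non-isomorphic bundles, and by Corollary \ref{cor-parameter-space-semistable-twisted-equivariant} every polystable $(\theta,c)$-twisted $\Gamma$-equivariant $G$-bundle is represented by some point of $\ccss$. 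The coarse moduli property then follows from the universality of the GIT quotient together with the local construction of the classifying morphism: given a family of polystable objects over a base $T$, Corollary \ref{cor-parameter-space-semistable-twisted-equivariant} and the freedom in the isomorphism $V\cong H^0(E(m))$ furnish, \'etale-locally on $T$, a $\GL(V)$-equivariant morphism to $\ccss$, and after quotienting these glue to a well-defined morphism $T\to \Mm(X,G,\Gamma,\theta,c)$.

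For the open stable locus, the GIT-stable points $\GG^s\subset\GGss$ form an open subset; its preimage $\cc^s:=\Giesss^{-1}(\GG^s)$ is open and $\GL(V)$-invariant in $\ccss$, and by the same argument as in Proposition \ref{prop-polystability-vs-S-equivalence} (closed orbit together with minimal-dimensional stabilizer) it corresponds precisely to stable twisted equivariant $G$-bundles. Its image $\Mm^s\subset \Mm(X,G,\Gamma,\theta,c)$ is the required open subvariety. The main technical obstacle is not the assembly itself but the verification that the parameter choices $m_0,\delta,\chi$ can be made uniformly so that Propositions \ref{prop-m3} and \ref{prop-m5}, Proposition \ref{prop-properness} and Lemma \ref{lemma-mu=0} all hold at once; however, each of these statements is already phrased in the regime $\chi\gg\delta\gg 0$ and $m\ge m_0$, so compatibility is immediate.
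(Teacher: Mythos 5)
Your proposal is correct and follows essentially the same route as the paper: both define $\Mm(X,G,\Gamma,\theta,c):=\ccss\sslash\GL(V)$ and assemble Theorem \ref{th-semistability-vs-hilbert-mumford}, Propositions \ref{prop-semistable-pseudo-vs-pairs}, \ref{prop-equivalence-of-categories-twisted-pseudo}, \ref{prop-polystability-vs-S-equivalence} and \ref{prop-properness} in the same way. You actually supply more detail than the paper on the coarse moduli property and the open stable locus; the only minor imprecision is asserting that injective plus proper yields a closed embedding (it yields a finite morphism with closed image, which is all that is needed for projectivity of the quotient).
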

\begin{proof}
    By Theorem \ref{th-semistability-vs-hilbert-mumford} and Propositions \ref{prop-semistable-pseudo-vs-pairs} and \ref{prop-equivalence-of-categories-twisted-pseudo}, we may choose $m\ge m_0$, such that the linearization given in Section \ref{section-gieseker} provides a GIT quotient $\Mm(X,G,\Gamma,\theta,c):=\ccss\sslash\GL(V)$ parametrizing S-equivalence classes of semistable $(\theta,c)$-twisted $\Gamma$-equivariant $G$-bundles over $X$. By Proposition \ref{prop-polystability-vs-S-equivalence}, $\Mm(X,G,\Gamma,\theta,c)$ classifies isomorphism classes of polystable $(\theta,c)$-twisted $\Gamma$-equivariant $G$-bundles over $X$. By Proposition \ref{prop-properness}, it is a complex projective variety.
\end{proof}

\section{The moduli space for \texorpdfstring{$G$}{G} reductive}\label{section-moduli-space-reductive}

Fix the data $X, G,\Gamma,\theta$ and $c$ as in previous sections, where $G$ is now an arbitrary connected reductive complex Lie group. Recall that the construction in Sections \ref{section-parameter-space} and \ref{section-moduli-space-semisimple} only works when $G$ is semisimple. However, we may use it to build the moduli space of $(\theta,c)$-twisted $\Gamma$-equivariant $G$-bundles for reductive $G$. We follow the approach in \cite{gomez-sols}, which is a generalization of \cite{ramanathan1}. 

We include here a result that will be needed throughout this section. Recall that, in this general setting, the (poly, semi)stability notions depend on a parameter, which is a $\Gamma$-invariant element $\cha\in i\lie z_{\lie k}^{\Gamma}$ --- here $\lie z_{\lie k}$ is the centre of a $\Gamma$-invariant maximal compact subalgebra $\lie k$ of $\lie g$, with corresponding maximal compact subgroup $K$ ---, and a $G$-invariant pairing $\lie g\cong\lie g^*$.

Recall that the topology of a $G$-bundle over $X$ can be identified with an element of $Z(G)^*$, or $\lie z^*$, where $Z(G)$ is the centre of $G$ with Lie algebra $\lie z$. Via the above pairing, this may be regarded as an element of $\lie z$.

\begin{definition}\label{def-topological-type}
    The \textbf{topological type} of a $G$-bundle $\bundle$ is the number
$\frac{i}{2\pi}\int_XF$, where $F$ is the Chern curvature corresponding to some smooth reduction of structure group of $\bundle$ to the maximal compact subgroup $K$. 
\end{definition}


\begin{proposition}\label{prop-topology-parameter}
    Take $\cha\in i\lie z_{\lie k}^{\Gamma}$ and let $(\bundle,\action)$ be a $\cha$-polystable $(\theta,c)$-twisted $\Gamma$-equivariant $G$-bundle over $X$. Then $\bundle$ has topological type $\cha$. Conversely, if a $G$-bundle of topological type $\cha$ is $\cha_0$-polystable for some $\cha_0\in i\lie z_{\lie k}^{\Gamma}$, then $\cha_0=\cha$.
\end{proposition}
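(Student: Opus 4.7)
The plan is to exploit the fact that when $s$ is central the parabolic $P_s$ degenerates to all of $G$, so the defining inequality of $\cha$-semistability collapses into an equality linking the topological type $\chi_E$ of $\bundle$ to the parameter $\cha$. The converse is then immediate from the direct statement.

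First, for any $s\in i\zk^{\Gamma}$ both $P_s$ (as in (\ref{eq-def-Ps})) and its Levi $L_s$ (as in (\ref{eq-def-levi})) coincide with $G$, since $e^{ts}$ is central. Consequently $H^0(X,\bundle(G/P_s))^{\Gamma}$ consists only of the tautological reduction $\tau$ with $\bundle_\tau=\bundle$, and $K_s=K$. Choosing a $\Gamma$-equivariant smooth reduction of $\bundle$ to $K$ with Chern connection $A$, and using the decomposition $\lie g=[\lie g,\lie g]\oplus\lie z$ of a reductive Lie algebra, I compute
\begin{equation*}
\deg \bundle(\tau,s)=\frac{i}{2\pi}\int_X\pair s{F_A}=\Big\langle s,\frac{i}{2\pi}\int_X F_A\Big\rangle=\pair s{\chi_E},
\end{equation*}
using Definition \ref{def-topological-type} and the orthogonality of $\lie z$ and $[\lie g,\lie g]$ under any $G$-invariant pairing, so that only the central projection of $F_A$ contributes to the pairing with a central $s$.

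Next, applying the $\cha$-semistability inequality (implied by $\cha$-polystability) both to $s$ and to $-s$ for $s\in i\zk^{\Gamma}$ yields $\pair s{\chi_E-\cha}=0$ for every such $s$. The restriction of the chosen $G$-invariant pairing to $i\zk^{\Gamma}$ is non-degenerate, being the restriction of a pairing that is positive definite on $i\lie k$. Since both $\cha$ and $\chi_E$ lie in $i\zk^{\Gamma}$, non-degeneracy forces $\chi_E=\cha$, proving the first direction. The $\Gamma$-invariance of $\chi_E$ uses that the twisted equivariant action on $\bundle$ preserves a $\Gamma$-equivariant $K$-reduction up to multiplication by the central cocycle $c$, which acts trivially on $\lie z$ by conjugation; hence the integral of $F_A$ projected to $\lie z$ is $\theta$-invariant. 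For the converse, if a $G$-bundle of topological type $\cha$ is $\cha_0$-polystable, then by the first direction applied to this bundle its topological type equals $\cha_0$, hence $\cha_0=\cha$.

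The main obstacle I expect is checking that $\chi_E$ is well-defined as an element of $i\zk^{\Gamma}$ independently of the choices of $\Gamma$-equivariant $K$-reduction and of connection. Independence from the reduction follows by connecting two smooth reductions through a path and noting that the central projection of the integral of the curvature is invariant under gauge transformations (this is where one uses that the non-abelian terms $[\alpha,\alpha]$ in the change-of-curvature formula lie in $[\lie g,\lie g]$, hence vanish under projection to $\lie z$). The $\Gamma$-invariance requires a careful bookkeeping of how the twisted equivariant action interacts with $\theta$ on $\lie z$, but once these points are established the actual deduction from the $\pm s$ inequalities is formal.
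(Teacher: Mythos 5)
Your proof is correct, but it takes a genuinely different route from the paper's. The paper first invokes Proposition \ref{prop-semistability-underlying-bundle} to reduce to the underlying $G$-bundle and then applies the Hitchin--Kobayashi correspondence: polystability yields a smooth $K$-reduction whose Chern curvature satisfies $F=-2\pi i\cha\,\omega$ with $\omega$ of unit volume, so $\frac{i}{2\pi}\int_XF=\cha$ immediately. You instead stay entirely on the algebraic side of the stability definition: for central $s\in i\zk^{\Gamma}$ the parabolic $P_s$ is all of $G$, the unique reduction is the tautological one, and testing the semistability inequality against both $s$ and $-s$ forces $\pair{s}{\chi_E-\cha}=0$ for all such $s$, whence $\chi_E=\cha$ by non-degeneracy. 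What your argument buys is that it needs only semistability (not the full polystability hypothesis) and avoids the analytic input of the Hitchin--Kobayashi correspondence, which the paper uses as a black box in the twisted equivariant setting; what the paper's argument buys is brevity and the stronger geometric statement that the curvature can actually be made constant. The only points in your write-up that genuinely need the care you flag are (i) that $\chi_E-\cha$ lies in $i\zk^{\Gamma}$ --- the $\Gamma$-invariance of $\chi_E$ follows most cleanly from the isomorphisms $\fg:\bundle\to\gamma^*\tg(\bundle)$ implicit in the twisted action, since pullback by $\gamma$ preserves degrees --- and (ii) that the restriction of the invariant pairing to $i\zk^{\Gamma}$ is non-degenerate, which holds under the standard convention that the pairing is positive definite on $i\lie k$; both are true, so the argument goes through.
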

\begin{proof}
    By Proposition \ref{prop-semistability-underlying-bundle}, $\bundle$ is $\cha$-polystable as a $G$-bundle. Therefore, by the Hitchin--Kobayashi correspondence, there exists a smooth reduction of structure group to $K$ whose corresponding Chern curvature $F$ satisfies
    \begin{equation*}
        F=-2\pi i\cha\omega,
    \end{equation*}
    where $\omega$ is a volume form on $X$ with total volume 1. In particular,
    \begin{equation*}
        \frac{i}{2\pi}\int_XF=\cha.
    \end{equation*}
    Since the left hand side is the topological type of $\bundle$, the proposition follows.
\end{proof}

\subsection{Finite torsors and moduli problems}\label{section-torsors}

This section is inspired by \cite{gomez-sols}. We consider two moduli problems, represented by presheaves $\Ff1$ and $\Ff2$ over a site of of schemes over $\C$ --- with a suitable Grothendieck topology --- with values in groupoids. These presheaves represent families of geometric objects over $X$, parametrized by the input scheme. We make the following \textbf{assumptions}.
\begin{enumerate}
    \item There is a natural transformation $\eta:\Ff1\to\Ff2$ and a finite abelian group $A$ such that, for every contractible scheme $U$ --- in the strong analytic topology --- and every $p\in\Ff2(U)$, $\eta^{-1}(p)$ is an $A$-torsor.
    \item For every complex variety $S$, the functor $\eta(S):\Ff1(S)\to\Ff2(S)$ makes $\Ff1(S)$ a fibred category over $\Ff2(S)$.
    \item For every scheme $S/\C$, $\eta(S):\Ff1(S)\to\Ff2(S)$ is surjective on objects.
    \item There exist a complex variety $R_2$ and an element $u_2\in\Ff2(R_2)$ such that any family in the image of $\Ff2$ is locally the pullback of $u_2$ by some morphism to $R_2$. In other words, the sheafification $\tFf2$ of $\Ff2$ is representable by $R_2$.
    \item There exists a complex reductive group $H$ and an algebraic $H$-action on $R_2$ which lifts to an $H$-action on $u_2$ and such that, given two points $r$ and $r'\in R_2$, $u_2\vert_{\{r\}\times X}\cong u_2\vert_{\{r'\}\times X}$ if and only if $r'\in H\cdot r$.
\end{enumerate}


\begin{lemma}\label{lemma-R1-from-R2}
    Let $\tFf2$ be the sheafification of $\Ff2$. Then $\tFf2$ is representable by a complex variety $R_1$ which is an unramified Galois covering $p:R_1\to R_2$ with Galois group $A$. Moreover, the $H$-action on $R_2$ lifts to an $H$-action on $R_1$, such that two points $r$ and $r'\in R_1$ are in the same $H$-orbit if and only if $u_1\vert_{\{r\}\times X}\cong u_1\vert_{\{r'\}\times X}$, where $u_1\in\tFf1(R_1)$ is a universal family. 
\end{lemma}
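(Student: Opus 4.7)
The plan is to construct $R_1$ as a finite étale Galois $A$-cover of $R_2$ which tautologically parametrizes lifts of the universal family $u_2\in\Ff{2}(R_2)$ along $\eta$ (the statement concerns the sheafification of $\Ff{1}$, the one of $\Ff{2}$ being already representable by $R_2$ via~(4)). Concretely, consider the presheaf on $R_2$-schemes
\[
\mathcal T\colon(f\colon S\to R_2)\longmapsto \bigl\{\text{iso.~classes of }u_1'\in\Ff{1}(S) : \eta(u_1')\cong f^*u_2\bigr\}.
\]
By~(3), $\mathcal T(R_2)$ is non-empty, and by~(1) the restriction of $\mathcal T$ to any contractible analytic open $U\subset R_2$ is an $A$-torsor. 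Its sheafification $\widetilde{\mathcal T}$ is therefore an analytic principal $A$-bundle over $R_2$; since $A$ is finite and $R_2$ is algebraic, GAGA for finite analytic covers promotes this to a finite étale Galois cover $p\colon R_1\to R_2$ with deck group $A$, and $p$ is automatically unramified.

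Next, $R_1$ carries a tautological universal family $u_1\in\Ff{1}(R_1)$. Each point $r\in R_1$ encodes an isomorphism class of lift of $u_2|_{\{p(r)\}\times X}$, and étale-locally on $R_1$ the map $p$ is an isomorphism, so the tautological local lifts assemble, via the fibered-category property~(2), into a global $u_1\in\Ff{1}(R_1)$ with $\eta(u_1)\cong p^*u_2$. Representability of $\tFf{1}$ by $(R_1,u_1)$ then follows formally: given $v\in\Ff{1}(S)$, the image $\eta(v)$ corresponds by~(4) to a morphism $f\colon S\to R_2$, and the datum of $v$ as a lift of $\eta(v)$ selects a lift $\tilde f\colon S\to R_1$ of $f$ which recovers $v$ as $\tilde f^*u_1$, uniquely up to the requisite isomorphism.

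For the $H$-action, the given $H$-action on $u_2$ defines, for each $h\in H$, a bijection from iso classes of lifts of $u_2|_S$ to iso classes of lifts of $u_2|_{h\cdot S}$ by transport of structure. These bijections are $A$-equivariant and compatible with restriction, hence assemble into an $H$-action on $\widetilde{\mathcal T}=R_1$ covering the action on $R_2$ and commuting with the deck $A$-action; by universality, $u_1$ acquires a compatible $H$-equivariant structure. Finally, for the orbit characterization, suppose $u_1|_{\{r\}\times X}\cong u_1|_{\{r'\}\times X}$. Applying $\eta$ and invoking~(5) gives $p(r')=h\cdot p(r)$ for some $h\in H$, so $r'$ and $h\cdot r$ both lie in $p^{-1}(p(r'))$ and hence differ by some $a\in A$; tracking the given isomorphism through the construction of $u_1$ forces $a=1$, so $r'=h\cdot r$. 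The converse is immediate from the $H$-equivariance of $u_1$.

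The main obstacle will be in the middle two paragraphs: cleanly producing $u_1$ and verifying via~(2) that the étale-local tautological lifts glue on double overlaps, and then checking that the $H$-action on $R_1$ interacts with the $A$-deck action in exactly the way needed for the orbit identification. These are bookkeeping tasks in the 2-categorical language of fibered categories rather than deep geometry, but the coherence of isomorphisms up to the $A$-ambiguity must be tracked with care.
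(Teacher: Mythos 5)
Your proposal is correct and follows essentially the same route as the paper: construct $R_1$ by sheafifying the presheaf of lifts of $u_2$ along $\eta$, use Assumption (1) to trivialize it as an $A$-torsor over contractible opens and glue to an unramified Galois cover, assemble $u_1$ from the local tautological lifts via the fibered-category property (2), lift the $H$-action by transporting the isomorphism $\pi_{R_2}^*u_2\cong(\phi_H\times\id_X)^*u_2$ through $p$ and (2), and deduce the orbit characterization from Assumption (5). The only cosmetic difference is your explicit appeal to GAGA/Riemann existence to algebraize the analytic $A$-cover, which the paper leaves implicit by citing Schmitt's construction.
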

\begin{proof}
    The construction of $R_1$ is similar to \cite[Theorem 2.4.8.7]{schmitt}.
    Given an open subspace $U$ of $R_2$, we denote by $\Ff2^U$ the functor from the site of schemes over $U$ to sets, sending a morphism $f:S\to U$ to $\eta(S)^{-1}(f^*u_2\vert_{U\times X})$. 

    Since $\tFf2$ is a sheaf,
    it suffices to check that there is an open covering $\curly U=(U_i)_i$ of $R_2$ such that $\Ff2^{U_i}$ is representable by $U_i\times A$ as an $A$-torsor. This follows from Assumption (2). After gluing, we obtain an unramified Galois cover $p:R_1\to R_2$ with Galois group $A$. The universal family $u_1\in\tFf1(R_1)$ is obtained by gluing elements $u_1\vert_{p^{-1}(U_i)\times X}\in\Ff1(p^{-1}(U_i))$. There is a natural isomorphism $\teta(R_1)(u_1)\xrightarrow{\sim}p^*(u_2)$, where $\teta:\tFf1\to\tFf2$ is induced by $\eta$.

    In order to lift the $H$-action, we use the same argument as in the proof of \cite[Proposition 2.4.8.9]{schmitt}. The $H$-action on $R_2$ is given by a morphism
    \begin{equation*}
        \phi_H:H\times R_2\to R_2.
    \end{equation*}
    The lift of this $H$-action to $u_2$, which exists by Assumption (5), is given by an isomorphism
    \begin{equation*}
        \tphi_H:\pi_{R_2}^*u_2\xrightarrow{\sim} (\phi_H\times\id_X)^*u_2
    \end{equation*}
    over $H\times R_2\times X$. Pulling back to $R_1$ provides an isomorphism
    \begin{equation*}
        \tphi'_H:p^*\pi_{R_2}^*u_2\xrightarrow{\sim} p^*(\phi_H\times\id_X)^*u_2
    \end{equation*}
    $H\times R_1\times X$. Via the isomorphism $\teta(R_1)(u_1)\xrightarrow{\sim}p^*(u_2)$ and Assumption (2), $\tphi'_H$ induces an automorphism of $\pi^*_Hu_1$ over $H\times R_1\times X$. This, in turn, induces an $H$-action on $R_1$.
    
    By construction, if two points of $R_1$ are in the same $H$-orbit, then they yield isomorphic objects over $X$. Conversely, if $u_1\vert_{\{r\}\times X}\cong u_2\vert_{\{r'\}\times X}$ for two points $r$ and $r'\in R_1$, then $p^*u_2\vert_{\{r\}\times X}\cong p^*u_2\vert_{\{r'\}\times X}$ and $p(r')\in H\cdot p(r)$ by Assumption (5), which by construction implies that $r'\in H\cdot r$.
\end{proof}

\subsection{Moduli space for \texorpdfstring{$G$}{G} abelian}\label{section-G-abelian}

In this section we address the construction of the moduli space of $(\theta,c)$-twisted equivariant $G$-bundles for $G$ abelian. In other words,  $G\cong (\C^*)^{\times p}$ for some $p\in\N$. In this case
\begin{equation}\label{eq-automorphism-abelian-G}
    \Aut(G)\cong \GL_p(\Z),
\end{equation}
where $\GL_p(\Z)$ acts on $\C^*$ via 
\begin{equation*}
    (a_{ij})_{1\le i,j\le p}\cdot (z_1,\dots,z_p):=(\prod_k z_{k}^{a_{1k}},\prod_k z_{k}^{a_{2k}},\dots,\prod_kz_{k}^{a_{pk}}).
\end{equation*} 

Recall that there is a subgroup $Z^1_{\theta}(\Gamma,G)\subset G^{\Gamma}$ consisting of maps $\kappa:\Gamma\to G$ which satisfy
\begin{equation*}
    \kappa(\gamma_1\gamma_2)=\kappa(\gamma_1)\theta_{\gamma_1}(\kappa(\gamma_2)),
\end{equation*}
called \textbf{1-cocycles}. The group $G$ acts on $Z^1_{\theta}(\Gamma,G)$ by
\begin{equation*}
     Z^1_{\theta}(\Gamma,G)\times G\to Z^1_{\theta}(\Gamma,G);\,\kappa\cdot g:=g^{-1}\kappa({\gamma})\tg(g).
\end{equation*}
The quotient $H^1_{\theta}(\Gamma,G):=Z^1_{\theta}(\Gamma,G)/G$ is the \textbf{first cohomology group} of $\Gamma$ with values in $G$ --- see, for example, \cite{serre-galois}. 

\begin{proposition}\label{prop-G-abelian-twisted-equivariant-actions-H1}
    Given a $G$-bundle $\bundle\to X$, the set of isomorphism classes of $(\theta,c)$-twisted $\Gamma$-equivariant structures on $\bundle$ is either empty or an $H^1_{\theta}(\Gamma,G)$-torsor.
\end{proposition}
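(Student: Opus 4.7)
The plan is to fix a basepoint: assume that $\bundle$ admits at least one $(\theta,c)$-twisted $\Gamma$-equivariant structure $\bullet_0$, and then set up a free and transitive action of $H^1_\theta(\Gamma,G)$ on the set of isomorphism classes of such structures on $\bundle$. First I would produce from any second structure $\bullet$ a function $\kappa:\Gamma\to G$ by writing $q\bullet\gamma=(q\bullet_0\gamma)\kappa_\gamma(q)$; the equation makes sense fiberwise because both $\bullet\gamma$ and $\bullet_0\gamma$ cover the same automorphism of $X$, so their difference is a vertical automorphism of $\bundle$. Using the $G$-equivariance axiom in (\ref{eq-twisted-equivariant-axioms}) together with the abelianness of $G$, one checks $\kappa_\gamma(qg)=\kappa_\gamma(q)$, so $\kappa_\gamma$ descends to a holomorphic map $X\to G$. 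Because $X$ is a compact Riemann surface and $G\cong(\C^*)^{\times p}$ is affine, every such map is constant, and one obtains a genuine map $\kappa:\Gamma\to G$.

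Next I would substitute $q\bullet\gamma=(q\bullet_0\gamma)\kappa(\gamma)$ into the associativity axiom $(q\bullet\gamma_1)\bullet\gamma_2=(qc(\gamma_1,\gamma_2))\bullet(\gamma_1\gamma_2)$, using the same identity for $\bullet_0$. The factor $c(\gamma_1,\gamma_2)$ cancels out (it appears on both sides), and one obtains the identity $\kappa(\gamma_1\gamma_2)=\theta_{\gamma_2}^{-1}(\kappa(\gamma_1))\kappa(\gamma_2)$. This is not quite the definition of $Z^1_\theta(\Gamma,G)$ recorded at the start of Section \ref{section-G-abelian}, but the change of variables $\mu(\gamma):=\kappa(\gamma^{-1})^{-1}$ bijects the set of such $\kappa$'s with $Z^1_\theta(\Gamma,G)$ as defined in the paper: a direct computation using abelianness yields $\mu(\gamma_1\gamma_2)=\mu(\gamma_1)\theta_{\gamma_1}(\mu(\gamma_2))$.

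Third, I would analyse the ambiguity in the choice of representative. Since $G$ is abelian and $X$ is compact, the automorphism group of $\bundle$ as a $G$-bundle equals $H^0(X,\bundle\times_GG)=G$ itself (with $G$ acting on itself trivially, by conjugation). An automorphism $q\mapsto qg$ with $g\in G$ conjugates $\bullet$ into the structure $q\bullet'\gamma=(q\bullet\gamma)\theta_\gamma^{-1}(g^{-1})g$; this multiplies $\kappa$ by the map $\gamma\mapsto\theta_\gamma^{-1}(g^{-1})g$, and under the substitution $\mu(\gamma)=\kappa(\gamma^{-1})^{-1}$ one verifies this is exactly the coboundary $g^{-1}\theta_\gamma(g)$ from the coboundary action in the definition of $H^1_\theta(\Gamma,G)$. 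Thus cohomologous $\mu$'s arise from isomorphic structures, and conversely. Finally one checks that, given any $\kappa\in Z^1_\theta(\Gamma,G)$, the formula $q\bullet\gamma:=(q\bullet_0\gamma)\kappa(\gamma)$ really does define a $(\theta,c)$-twisted $\Gamma$-equivariant structure, which is a routine verification of the three axioms. This proves the set of isomorphism classes is an $H^1_\theta(\Gamma,G)$-torsor.

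The main technical point to watch is the bookkeeping around the sign/side convention: the 1-cocycle identity that emerges naturally from associativity and the right $G$-action is not the paper's convention on the nose, so the bijection $\kappa\leftrightarrow\mu$ must be checked to intertwine both the cocycle condition and the coboundary action. Once this translation is established, the remaining content of the proposition reduces to direct manipulations with the axioms in (\ref{eq-twisted-equivariant-axioms}).
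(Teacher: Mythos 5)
Your proof is correct and follows essentially the same route as the paper: fix a reference structure, identify the difference of two structures with a $1$-cocycle, and quotient by the coboundary action coming from bundle automorphisms (which, for $G$ abelian and $X$ compact, are exactly the constants $g\in G$). The only divergence is notational --- the paper inserts $\kappa(\gamma)$ on $q$ \emph{before} applying the reference action, i.e.\ $q*\gamma=(q\kappa(\gamma))\bullet_0\gamma$, which yields the cocycle identity $\kappa(\gamma_1\gamma_2)=\kappa(\gamma_1)\theta_{\gamma_1}(\kappa(\gamma_2))$ directly and makes your change of variables $\mu(\gamma)=\kappa(\gamma^{-1})^{-1}$ unnecessary, but your translation of both the cocycle condition and the coboundary action checks out.
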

\begin{proof}
Given a $G$-bundle $\bundle\to X$ admitting some $(\theta,c)$-twisted $\Gamma$-equivariant structure $\action$, the set of $(\theta,c)$-twisted $\Gamma$-equivariant structures on $\bundle$ is a $Z^1_{\theta}(\Gamma,G)$-torsor. Indeed, any other $(\theta,c)$-twisted $\Gamma$-equivariant structure $*$ is of the form
\begin{equation*}
    q*\gamma=(q\cdot \kappa(\gamma))\action\gamma
\end{equation*}
for each $q\in\bundle$ and $\gamma\in\Gamma$, where $\kappa:\Gamma\to G$ is some map. The condition that $*$ is $(\theta,c)$-twisted --- i.e., the corresponding 2-cocycle $c$ is the same as that of $\action$ --- implies that $\kappa$ is a 1-cocycle in $Z^1_{\theta}(\Gamma,G)$.

Moreover, any automorphism of $\bundle$ is given by multiplication by an element $g\in G$. This fits into the commutative diagramme
\begin{equation*}
    \begin{tikzcd}
        \bundle \arrow[r,"\cdot g"]\arrow[d,"(\cdot\kappa({\gamma}))\bullet\gamma"]    &  \bundle\arrow[d,"(\cdot g^{-1}\kappa({\gamma})\tg(g))\bullet\gamma"]\\
        \bundle \arrow[r,"\cdot g"]   &   \bundle
    \end{tikzcd}
\end{equation*}
for each $\gamma\in\Gamma.$ Therefore, the group of $G$-bundle automorphisms of $\bundle$ acts on $Z^1_{\theta}(\Gamma,G)$ via
\begin{equation*}
     Z^1_{\theta}(\Gamma,G)\times G\to Z^1_{\theta}(\Gamma,G);\,\kappa\cdot g:=g^{-1}\kappa({\gamma})\tg(g).
\end{equation*}
The set of isomorphism classes of $(\theta,c)$-twisted $\Gamma$-equivariant structures on $\bundle$ is thus a torsor of the quotient of $Z^1_{\theta}(\Gamma,G)$ by this $G$-action, which is precisely equal to the first cohomology group $H^1_{\theta}(\Gamma,G)$.
\end{proof}

Fix a character $d\in G^*$. Via the isomorphism $G^*\cong\Z^p$, this can be identified with a tuple of $p$ integers $d=(d_1,\dots,d_p)$. A $G$-bundle $\bundle$ has topological type $d$ --- according to Definition \ref{def-topological-type} --- if and only if it is a product of $p$ $\C^*$-bundles of degrees $d_1,\dots,d_p$. Therefore, in the following we call $d$ the \textbf{degree} of $\bundle$.

The set of isomorphism classes of $G$-bundles of degree $d$ admits a fine moduli space
\begin{equation*}
    \Jac^{d}:=\prod_{1\le k\le p}\Jac^{d_k},
\end{equation*}
where $\Jac^{d_k}$ is the degree $d_k$ part of the Picard group of $X$. A universal $G$-bundle is given by a product of Poincar\'e bundles
\begin{equation}\label{eq-poincare}
    \Pp^d:=\bigotimes_{1\le k\le p}\pi_k^*\Pp^{d_k},
\end{equation}
where $\pi_k:G\to\C^*$ is the projection onto the $k$-th factor.
Let $\Jac^{d,\Gamma}_{\theta,c}\subset\Jac^d$ be the closed subvariety consisting of $G$-bundles of degree $d$ admitting a $(\theta,c)$-twisted $\Gamma$-equivariant structure. This is a connected component of the subvariety $(\Jac^d)^{\Gamma}\subset\Jac$ of $\Gamma$-fixed points, where $\gamma\in\Gamma$ sends a $G$-bundle $\bundle$ to $\gamma^*\tg(\bundle)$ --- here $\tg(\bundle)$ is the extension of structure group of $\bundle$ by $\tg$. Note that the subvarieties $\Jac^{d,\Gamma}_{\theta,c}$ and $\Jac^{d,\Gamma}_{\theta,c'}$ are the same if $c$ and $c'$ belong to the same cohomology class in $H^2_{\theta}(\Gamma,G)$, and they are disjoint otherwise --- see the beginning of Section \ref{section-twisted-equivariant-bundles} for the definition of $H^2_{\theta}(\Gamma,G)$.

\begin{lemma}\label{lemma-Jac-fibred}
    Fix a complex scheme $S$. The groupoid $\Ff1(S)$ of families of $(\theta,c)$-twisted $\Gamma$-equivariant $G$-bundles over $X$ parametrized by $S$ is fibred over the groupoid $\Ff2(S)$ of families of $G$-bundles over $X$ parametrized by $S$ and admitting a $(\theta,c)$-twisted $\Gamma$-equivariant structure. If $S$ is contractible, for every object $L\in \Ff2(S)$, the set of isomorphism classes of elements in the fibre of $L$ is an $\HG$-torsor.
\end{lemma}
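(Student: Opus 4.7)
The plan has two parts. For the fibered structure, I would transport the twisted equivariant action along any morphism $\phi: L \to L'$ in $\Ff2(S)$: given $(L, \action) \in \Ff1(S)$ above $L$, define $\action'$ on $L'$ by $q' \action' \gamma := \phi\bigl(\phi^{-1}(q') \action \gamma\bigr)$. The axioms (\ref{eq-twisted-equivariant-axioms}) with fixed 2-cocycle $c$ transfer along $\phi$, so $(L', \action') \in \Ff1(S)$ and $\phi$ lifts to a morphism in $\Ff1(S)$ above the chosen morphism in $\Ff2(S)$. Since morphisms in both groupoids are invertible, this cartesian lift is essentially unique and provides the fibered structure.

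For the torsor assertion, fix $L \in \Ff2(S)$; the fibre $\eta^{-1}(L)$ is nonempty by definition of $\Ff2$. Repeating the computation in Proposition \ref{prop-G-abelian-twisted-equivariant-actions-H1} at the level of families, any two twisted equivariant structures $\action$ and $*$ on $L$ differ uniquely by a map $\kappa: \Gamma \to \Aut_S(L)$ via $q * \gamma = (q \cdot \kappa(\gamma)) \action \gamma$, and the requirement that $\action$ and $*$ share the same 2-cocycle $c$ forces $\kappa \in Z^1_\theta(\Gamma, \Aut_S(L))$. Moreover, two twisted structures are isomorphic precisely when their cocycles differ by a coboundary coming from some $g \in \Aut_S(L)$ via $\kappa \cdot g = g^{-1}\kappa(\gamma)\tg(g)$. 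Hence the set of isomorphism classes of objects in the fibre is a torsor for $H^1_\theta(\Gamma, \Aut_S(L))$.

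It remains to identify $H^1_\theta(\Gamma, \Aut_S(L))$ with $\HG$. Since $G$ is abelian the adjoint bundle of $L$ is canonically trivial with fibre $G$, and a family of $G$-bundle automorphisms of $L$ is a morphism $S \times X \to G$. Because $X$ is proper connected and $G = (\C^*)^p$ affine, any such morphism is constant along $X$, so $\Aut_S(L) = \mathrm{Mor}(S, G)$. The inclusion of constant maps $G \hookrightarrow \mathrm{Mor}(S, G)$ is $\Gamma$-equivariant and induces a map $\HG \to H^1_\theta(\Gamma, \mathrm{Mor}(S, G))$. I would show this is an isomorphism by applying the exponential short exact sequence of $\Gamma$-modules
\[
0 \to \Z^p \to \mathcal{O}(S)^p \xrightarrow{\exp} \mathrm{Mor}(S, G) \to 0,
\]
which is exact on global sections precisely because contractibility kills $H^1(S, \underline{\Z}^p)$. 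Since $\mathcal{O}(S)^p$ is a $\Q$-vector space on which $|\Gamma|$ acts invertibly, $H^i_\theta(\Gamma, \mathcal{O}(S)^p) = 0$ for $i \ge 1$, so the connecting homomorphism yields $H^1_\theta(\Gamma, \mathrm{Mor}(S, G)) \xrightarrow{\sim} H^2_\theta(\Gamma, \Z^p)$. The same argument applied to the exponential sequence $0 \to \Z^p \to \C^p \to G \to 0$ (the case $S = \mathrm{pt}$) gives $\HG \xrightarrow{\sim} H^2_\theta(\Gamma, \Z^p)$, and the naturality of the long exact sequence in the coefficients identifies the two isomorphisms under the constant-section inclusion.

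The main obstacle is the final identification: one must be careful to formulate the exponential sequence in the correct (algebraic or analytic) category so that analytic contractibility of $S$ can genuinely be exploited, and one must check throughout that the $\Gamma$-action through $\theta\in\GL_p(\Z)$ on the exponential coordinates is compatible with the action on $\Z^p$ so that the long exact sequences compared above really are sequences of $\Gamma$-modules.
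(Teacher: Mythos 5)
Your argument is correct and, on the second half, takes a genuinely more explicit route than the paper's. The fibred-category part coincides with the paper's: transport the twisted action along the (invertible) morphism of $G$-bundles. For the torsor part the paper simply asserts that, for $S$ contractible, $\Ff2(S)$ is isomorphic to $S\times\Ff2(\pt)$ and then invokes Proposition \ref{prop-G-abelian-twisted-equivariant-actions-H1} pointwise; you instead rerun the cocycle computation of that proposition at the level of families, observing that $\Aut_S(L)=\mathrm{Mor}(S,G)$ because $X$ is projective and connected while $G$ is affine, so that the isomorphism classes in the fibre form an $H^1_{\theta}(\Gamma,\mathrm{Mor}(S,G))$-torsor, and you then identify this group with $\HG$ via the exponential sequence, the unique divisibility of $\mathcal O(S)^p$ (which kills $H^{\ge 1}_{\theta}(\Gamma,\mathcal O(S)^p)$ since $\Gamma$ is finite), and the vanishing of $H^1(S,\Z)$. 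What this buys is a genuine justification of the reduction to a point: a holomorphic family over a contractible base need not literally be a pullback from $X$, so the paper's intermediate claim is best read as exactly the comparison of cohomology groups you prove, and your argument isolates precisely where contractibility and connectedness of $S$ enter. The caveats you flag yourself are the real ones: the exponential sequence should be taken in the analytic category (consistent with the paper's use of the strong topology in Section \ref{section-torsors}; for a non-normal affine $S$ an invertible regular function need not have a regular logarithm or root), and one must check that $\exp$ intertwines the linear $\GL_p(\Z)$-action on logarithms with the monomial action (\ref{eq-automorphism-abelian-G}) on $(\C^*)^p$, which it does. Neither issue affects the validity of the proof.
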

\begin{proof}
    The forgetful functor $\Ff1(S)\to \Ff2(S)$ makes $\Ff1(S)$ a fibred category over $\Ff2(S)$. Indeed, given an isomorphism of $G$-bundles $\bundle_1\to\bundle_2$ and a $(\theta,c)$-twisted $\Gamma$-equivariant structure on $\bundle_2$, the isomorphism induces a $(\theta,c)$-twisted $\Gamma$-equivariant structure on $\bundle_1$ which determines a cartesian diagramme. When $S$ is contractible, the family $\Ff2(S)$ is isomorphic to $S\times\Ff2(\pt)$ and the fibres are $\HG$-torsors by Proposition \ref{prop-G-abelian-twisted-equivariant-actions-H1}.
\end{proof}


\begin{theorem}\label{th-moduli-space-G-abelian}
    Assume that $G\cong(\C^*)^p$. Then the set of isomorphism classes of $(\theta,c)$-twisted $\Gamma$-equivariant $G$-bundles of degree $d$ over $X$ admits a fine moduli space $\tJac_{\theta,c}^{d,\Gamma}$, which is an unramified Galois cover of $\Jac^{d,\Gamma}_{\theta,c}\subset\Jac^d$ with Galois group $\HG$.
\end{theorem}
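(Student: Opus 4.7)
The plan is to apply Lemma~\ref{lemma-R1-from-R2} directly to the setup at hand. Take $A=\HG$, let $\Ff1$ be the presheaf of groupoids sending a complex scheme $S$ to families of $(\theta,c)$-twisted $\Gamma$-equivariant $G$-bundles of degree $d$ over $X$ parametrized by $S$, and let $\Ff2(S)$ be the groupoid of families of $G$-bundles of degree $d$ over $X$ whose isomorphism class at each fibre lies in $\Jac^{d,\Gamma}_{\theta,c}$. The natural transformation $\eta\colon\Ff1\to\Ff2$ is the forgetful one. The remaining input is $R_2:=\Jac^{d,\Gamma}_{\theta,c}$ together with universal family $u_2$ equal to the restriction to $R_2\times X$ of the Poincaré bundle $\Pp^d$ from (\ref{eq-poincare}); I take $H$ to be the trivial group.

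I then verify the five hypotheses of Lemma~\ref{lemma-R1-from-R2}. Assumptions~(1) and~(2) are precisely the content of Lemma~\ref{lemma-Jac-fibred}. Assumption~(3) follows from the pointwise definition of $\Ff2(S)$ together with Proposition~\ref{prop-G-abelian-twisted-equivariant-actions-H1}: every family in $\Ff2(S)$ admits a $(\theta,c)$-twisted $\Gamma$-equivariant structure locally in $S$ --- for example on any contractible analytic open --- which suffices for surjectivity of $\eta$ at the sheafified level. Assumption~(4) asserts the representability of $\tFf2$ by $(R_2,u_2)$; this uses the classical representability of the relative Picard functor of degree-$d$ $G$-bundles by $\Jac^d$, together with the identification of $\Jac^{d,\Gamma}_{\theta,c}$ as the appropriate connected component of $(\Jac^d)^{\Gamma}$ carrying sections with the prescribed cohomology class. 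Assumption~(5) is trivial with $H=\{1\}$ and reduces to the fine moduli property of $R_2$.

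Lemma~\ref{lemma-R1-from-R2} then produces a complex variety $\tJac^{d,\Gamma}_{\theta,c}:=R_1$ representing $\tFf1$, together with a universal family $u_1\in\tFf1(R_1)$, and an unramified Galois cover $\tJac^{d,\Gamma}_{\theta,c}\to \Jac^{d,\Gamma}_{\theta,c}$ whose Galois group is $\HG$. This is exactly the statement of the theorem. The main technical obstacle I anticipate is Assumption~(4): the Jacobian $\Jac^d$ represents the relative Picard functor only after a choice of rigidification, so one has to be careful that the Poincaré bundle genuinely represents the sheafification $\tFf2$ restricted to the closed subvariety $\Jac^{d,\Gamma}_{\theta,c}$. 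Once this is settled, the $\HG$-torsor structure from Lemma~\ref{lemma-Jac-fibred} upgrades formally to the desired Galois cover via Lemma~\ref{lemma-R1-from-R2}.
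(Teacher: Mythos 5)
Your proposal is correct and follows essentially the same route as the paper: both apply Lemma \ref{lemma-R1-from-R2} with $A=\HG$, $R_2=\Jac^{d,\Gamma}_{\theta,c}$ equipped with the restricted Poincar\'e bundle, $H$ trivial, and Lemma \ref{lemma-Jac-fibred} supplying the torsor and fibred-category hypotheses. The only difference is cosmetic --- the paper defines $\Ff2(S)$ as families admitting a twisted equivariant structure rather than fibrewise, which makes Assumption (3) immediate, whereas you verify it by a local argument; both are fine.
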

\begin{proof}
     It is well known that the abelian group $\HG$ is finite. Consider the functor $\Ff1$ sending a complex scheme $S$ to the groupoid of families of $(\theta,c)$-twisted $\Gamma$-equivariant $G$-bundles of degree $d$ over $X$ parametrized by $S$. Let $\Ff2$ be the functor sending a complex scheme $S$ to the groupoid of families of $G$-bundles of degree $d$ over $X$ parametrized by $S$, admitting a $(\theta,c)$-twisted $\Gamma$-equivariant structure. By Lemma \ref{lemma-Jac-fibred}, Assumptions (1), (2) and (3) in Section \ref{section-torsors} are satisfied, by setting $A=\HG$. The space $\Jac^{d,\Gamma}_{\theta,c}\subset\Jac^d$ is a fine moduli space representing $\Ff2$, together with a universal family which is the restriction of the Poincar\'e bundle (\ref{eq-poincare}), hence Assumption (4) holds. Assumption (5) is satisfied with $H=1$. Therefore, the theorem follows from Lemma \ref{lemma-R1-from-R2}.
\end{proof}

\subsection{Moduli space for general reductive \texorpdfstring{$G$}{G}}\label{section-moduli-G-reductive}
Let now $G$ be an arbitrary connected reductive complex Lie group. Its Lie algebra features a decomposition $\lie g=\lie g'\oplus\lie z$, where $\lie g':=[\lie g,\lie g]$ and $\lie z$ is its centre. Therefore, there exist a semisimple group $G'$ --- namely, $G'=[G,G]$ ---, an abelian group $Z$ and an unramified finite Galois covering $p_G:G\to G'\times Z$, with abelian Galois group $\Lambda:=\Gal(G/G'\times Z)$. This restricts to an unramified $\Lambda$-Galois cover $Z(G)\to Z$, where $Z(G)$ is the centre of $G$.

Note that $\theta$ and $c$ induce a homomorphism $\otheta:\Gamma\to\Aut(G'\times Z)$ and a 2-cocycle $\oc\in H^2(\Gamma,Z)$.

Since $Z$ is connected, abelian and reductive, $Z\cong(\C^*)^{\times p}$ for some $p\in\N$.
Fix an element $\cha\in i\lie z_{\lie k}^{\Gamma}$, where $\lie z_{\lie k}$ is the centre of a maximal compact subalgebra of $\lie g$. By Proposition \ref{prop-topology-parameter}, in order to parametrize $\cha$-polystable $(\theta,c)$-twisted $\Gamma$-equivariant $G$-bundles over $X$, we need to focus on $G$-bundles with topological type $\cha$. Let $d_k:=\pi_k(\cha)\in\R$. We actually assume that $d_k\in\Z$, since otherwise the moduli space is empty. Then, given a $\cha$-polystable $(\theta,c)$-twisted $\Gamma$-equivariant $G$-bundle $(\bundle,\action)$ over $X$, 
the $\C^*$-bundle $\pi_k\pi_Zp_G(\bundle)$ has degree $d_k$, where $\pi_k:Z\to \C^*$ is the projection onto the $k$-th factor. In other words, using the notation of Section \ref{section-G-abelian}, $\pi_Zp_G(\bundle)$ has degree $d=\zeta$. 

\begin{theorem}\label{th-moduli-space-G-reductive}
    Let $G$ be an arbitrary connected reductive complex Lie group and fix $\cha\in i\lie z_{\lie k}^{\Gamma}$. Then there exists a complex projective variety $\Mm(X,G,\Gamma,\theta,c)$ which is a coarse moduli space parametrizing isomorphism classes of $\cha$-polystable $(\theta,c)$-twisted $\Gamma$-equivariant $G$-bundles over $X$. It contains an open subvariety classifying isomorphism classes of $\cha$-stable $(\theta,c)$-twisted $\Gamma$-equivariant $G$-bundles over $X$.
\end{theorem}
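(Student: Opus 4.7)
The strategy is to reduce the general reductive case to the product case $G' \times Z$, where the moduli space is constructed by combining Theorems \ref{th-moduli-space-G-semisimple} and \ref{th-moduli-space-G-abelian}, and then to climb back along the finite Galois cover $p_G : G \to G' \times Z$ using Lemma \ref{lemma-R1-from-R2}. Set $\Lambda := \ker p_G \subset Z(G)$; this is a finite abelian group preserved by $\theta$, since any automorphism of $G$ fixes the centre.

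First I would build $\Mm(X, G' \times Z, \Gamma, \otheta, \oc)$. Decomposing $\otheta = (\otheta_1, \otheta_2)$ and $\oc = (\oc_1, \oc_2)$ along the two factors, a $(\otheta, \oc)$-twisted $\Gamma$-equivariant $G' \times Z$-bundle is the same data as a pair consisting of a $(\otheta_1, \oc_1)$-twisted $\Gamma$-equivariant $G'$-bundle and a $(\otheta_2, \oc_2)$-twisted $\Gamma$-equivariant $Z$-bundle, and since parabolic subgroups of $G' \times Z$ are products of parabolic subgroups of the factors, $\zeta$-(poly)stability decomposes accordingly. Thus $\Mm(X, G' \times Z, \Gamma, \otheta, \oc) = \Mm(X, G', \Gamma, \otheta_1, \oc_1) \times \tJac_{\otheta_2, \oc_2}^{d, \Gamma}$ is projective, and its underlying parameter space $R_2 := \ccss \times \tJac_{\otheta_2, \oc_2}^{d, \Gamma}$ carries a $\GL(V)$-action through the first factor whose orbits are isomorphism classes (by Proposition \ref{prop-iso-vs-orbit} and the fineness of $\tJac_{\otheta_2, \oc_2}^{d, \Gamma}$).

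Next I would apply Lemma \ref{lemma-R1-from-R2} with $H := \GL(V)$, $\Ff1$ the presheaf of families of $\zeta$-polystable $(\theta, c)$-twisted $\Gamma$-equivariant $G$-bundles of topological type $\zeta$, $\Ff2$ the analogous presheaf for $G' \times Z$-bundles in the essential image of extension of structure group along $p_G$, and $\eta : \Ff1 \to \Ff2$ given by this extension. The fibre of $\eta$ over a family on a contractible base is a torsor over a finite abelian group $A$ built from the short exact sequence $1 \to \Lambda \to G \to G' \times Z \to 1$: lifts of the underlying principal bundle form an $H^1(X, \Lambda)$-torsor (finite since $\Lambda$ is), and further lifts of the twisted equivariant structure, modulo automorphisms of the $G$-bundle, contribute an $H^1_\theta(\Gamma, \Lambda)$-torsor factor by a variant of Proposition \ref{prop-G-abelian-twisted-equivariant-actions-H1} applied to $\Lambda$. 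Lemma \ref{lemma-R1-from-R2} then produces an unramified Galois cover $R_1 \to R_2$ with Galois group $A$ and a lifted $\GL(V)$-action whose orbits are isomorphism classes of $\zeta$-polystable $(\theta, c)$-twisted equivariant $G$-bundles; pulling back the Gieseker linearization of Section \ref{section-gieseker} along the finite map $R_1 \to R_2$, the GIT quotient $\Mm(X, G, \Gamma, \theta, c) := R_1 \sslash \GL(V)$ is a finite cover of the projective variety $\Mm(X, G' \times Z, \Gamma, \otheta, \oc)$, hence projective, and its open stable subvariety is the preimage of the stable locus on the $G'$-factor.

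The main technical obstacle will be showing that $\zeta$-polystability corresponds under extension of structure group along $p_G$. This requires identifying parabolic reductions of a $G$-bundle with parabolic reductions of its pushforward (which works because $p_G$ is an isogeny with kernel in the centre, so parabolic subgroups and Levi subgroups of $G$ correspond bijectively to those of $G' \times Z$), and checking that the degree condition $\deg E(\tau, s) \geq \pair{\zeta}{s}$ of Definition \ref{definition-semistable-twisted-equivariant} transfers cleanly under the isomorphism of centres induced by $p_G$. A related obstacle is upgrading the torsor statement in Proposition \ref{prop-G-abelian-twisted-equivariant-actions-H1} to a sheaf-theoretic form that applies naturally in families, as required by hypothesis (1) of Lemma \ref{lemma-R1-from-R2}.
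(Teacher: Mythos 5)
Your proposal follows essentially the same route as the paper: reduce to $G'\times Z$ by combining the semisimple and abelian constructions, then recover $G$ via the finite central isogeny $p_G$ and Lemma \ref{lemma-R1-from-R2}, with the fibres of extension of structure group being torsors over a finite (equivariant) cohomology group of $\Lambda=\ker p_G$. The paper makes your "essential image of $p_G$" precise by introducing an obstruction map $\delta_c$ into $H^2_{\Gamma,\theta}(X,\Lambda)$ and taking $R_2=\delta_c^{-1}(0)$ (a union of connected components of $\ccss\times\tJac_{\otheta,\oc}^{\zeta,\Gamma}$), and handles the torsor and polystability issues you flag by citing \cite[Propositions 6.3 and 6.7]{BGGM} and the finiteness of $\Lambda$; otherwise the arguments coincide.
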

\begin{proof}
    By \cite[Proposition 6.3]{BGGM}, the set of isomorphism classes of $(\theta,c)$-twisted $\Gamma$-equivariant $G$-bundles over $X$ is parametrized by a set $H^1_{\Gamma,\theta,c}(X,\underline{G})$, which is similar to a \v Cech cohomology set. Similarly, the set of isomorphism classes of $(\otheta,\oc)$-twisted $\Gamma$-equivariant $G'\times Z$-bundles over $X$ is parametrized by  $H^1_{\Gamma,\otheta,\oc}(X,\underline{G'\times Z})$. Extension of structure group by $p_G$ induces a map
    \begin{equation}\label{eq-extension-structure-group-pG}
        p_G:H^1_{\Gamma,\theta,c}(X,\underline{G})\to H^1_{\Gamma,\otheta,\oc}(X,\underline{G'\times Z}).
    \end{equation}
    Given an element $(\bundle,\action)\in H^1_{\Gamma,\theta,c}(X,\underline{G})$, the fibre $p_{G}^{-1}(p_{G}(\bundle,\action))$ is an $H^1_{\Gamma,\theta}(X,\Lambda)$-torsor by \cite[Proposition 6.7]{BGGM}, where $H^1_{\Gamma,\theta}(X,\Lambda)$ is finite because $\Lambda$ is finite.

    Given a $(\otheta,\oc)$-twisted $\Gamma$-equivariant $G'\times Z$-bundle $(\overline{\bundle},\action)\in H^1_{\Gamma,\otheta,\oc}(X,\underline{G'\times Z})$, the underlying $G'\times Z$-bundle $\overline{\bundle}$ determines an element $\delta(\overline{\bundle})\in H^2(X,\Lambda)$, such that $\delta(\overline{\bundle})=0$ if and only if there exists a $G$-bundle $\bundle$ such that $p_G(\bundle)\cong \overline{\bundle}$. Moreover, if $\delta(\overline{\bundle})=0$, the twisted equivariant action $\action$ on $\bundle$ lifts to a $(\theta,c')$-twisted $\Gamma$-equivariant action on $\bundle$, where $[c']\in H^2_{\theta}(\Gamma,Z(G))$ is a cohomology class such that $p_G(c')=\oc$. Multiplying by $c^{-1}$, we get a class $[c^{-1}c']\in H^2_{\theta}(\Gamma,Z(G))$. Since $c'$ and $c$ induce the same cohomology class $[\oc]\in H^2_{\otheta}(\Gamma,Z)$, the class $[c^{-1}c']$ determines an element of $H^2_{\otheta}(\Gamma,\Lambda)$. Introducing the second cohomology class $H^2_{\Gamma,\theta}(X,\Lambda)$ given by \cite[Definition 6.6]{BGGM}, we therefore find a map 
    \begin{equation*}
        \delta_c:H^1_{\Gamma,\otheta,\oc}(X,\underline{G'\times Z})\to H^2_{\Gamma,\theta}(X,\Lambda)
    \end{equation*}
    containing the data of $\delta(\overline{\bundle})\in H^2(X,A)$ and the class $[c^{-1}c']$. This satisfies that $(\overline{\bundle},\action)\in H^1_{\Gamma,\otheta,\oc}(X,\underline{G'\times Z})$ is in the image of (\ref{eq-extension-structure-group-pG}) if and only if $\delta_c(\overline{\bundle},\action)=0$. 
        
    Consider the functor $\Ff1$ sending a complex scheme $S$ to the groupoid of families of $(\theta,c)$-twisted $\Gamma$-equivariant $G$-bundles of topological type $\cha$ over $X$ parametrized by $S$. Let $\Ff2$ be the functor sending a complex scheme $S$ to the groupoid of families of $(\otheta,\oc)$-twisted $\Gamma$-equivariant $G'\times Z$-bundles of topological type $\zeta$ over $X$ parametrized by $S$, whose image by $\delta_c$ is equal to $0$.

    Let us check that Assumptions (1)-(5) in Section \ref{section-torsors} are satisfied.
    \begin{enumerate}
        \item Extension of structure group by $p_G$ determines a natural transformation $\eta:\Ff1\to\Ff2$, whose fibres are $H^1_{\Gamma,\theta}(X,\Lambda)$-torsors. Therefore, Assumption (1) holds with $A=H^1_{\Gamma,\theta}(X,\Lambda)$.

        \item Choose a complex scheme $S$, two isomorphic families of $(\otheta,\oc)$-twisted $\Gamma$-equivariant $G'\times Z$-bundles $(\overline{\bundle}_1,\action)$ and $(\overline{\bundle}_2,\action)$ on $S\times X$, a $(\theta,c)$-twisted $\Gamma$-equivariant $G$-bundle $(\bundle_2,\action)$ on $S\times X$ such that $p_G(\bundle_2,\action)=(\overline{\bundle}_2,\action)$ and an isomorphism
        \begin{equation}\label{eq-iso-bundle1-bundle2}
            (\overline{\bundle}_1,\action)\xrightarrow{\sim} (\overline{\bundle}_2,\action)
        \end{equation}
        Since $\delta_G(\overline{\bundle}_1)=0$, there exists a $G$-bundle $\bundle_1$ such that $p_G(\bundle_1)=\overline{\bundle}_1$. The total spaces of $\bundle_1$ and $\bundle_2$ are finite unramified Galois covers of the total spaces of $\overline{\bundle}_1$ and $\overline{\bundle}_2$, respectively, with the same Galois group $\Lambda$. Therefore, the isomorphism (\ref{eq-iso-bundle1-bundle2}) lifts to an isomorphism $\bundle_1\xrightarrow{\sim} \bundle_2$. If we endow $\bundle_1$ with the twisted equivariant structure induced by the twisted equivariant structure on $\bundle_2$ via this isomorphism, then we find a lift $(\bundle_1,\action)$ of $(\overline{\bundle}_1,\action)$ completing the cartesian diagramme. Therefore, $\Ff1(S)$ is a fibred category over $\Ff2(S)$, so Assumption (2) holds.

        \item The functor $\eta(S):\Ff1(S)\to\Ff2(S)$ is surjective on objects, since by construction all the objects in $\Ff2(S)$ have trivial image under $\delta_c$.

        \item Let $\ccss$ be the complex variety satisfying Corollary \ref{cor-parameter-space-semistable-twisted-equivariant}. Let $\tJac_{\otheta,\oc}^{\zeta,\Gamma}$ be the moduli space of $(\theta,c)$-twisted $\Gamma$-equivariant $Z$-bundles of degree $\zeta$ over $X$, given by Theorem \ref{th-moduli-space-G-abelian}. The map $\delta_c$ induces an algebraic morphism 
        \begin{equation*}
            \delta_c:\ccss\times \tJac_{\otheta,\oc}^{\zeta,\Gamma}\to H^2_{\Gamma,\theta}(X,\Lambda).
        \end{equation*}  
        Let $\delta_c^{-1}(0)\subset \ccss\times \tJac_{\otheta,\oc}^{\zeta,\Gamma}$. Since $H^2_{\Gamma,\theta}(X,\Lambda)$ is finite, this is a union of connected components of $\ccss\times \tJac_{\otheta,\oc}^{\zeta,\Gamma}$. It comes equipped with a universal family $(\mathcal Q\times \Pp^{\zeta},\action)$. Set $H=\GL(V)$. By Corollary \ref{cor-parameter-space-semistable-twisted-equivariant}, the complex variety $\delta_c^{-1}(0)$, together with the $\GL(V)$-action on $\delta^{-1}(0)$ induced by the action on the first factor $\ccss$ given by Corollary \ref{cor-parameter-space-semistable-twisted-equivariant}, satisfies Assumptions (4) and (5).
    \end{enumerate}

    By Lemma \ref{lemma-R1-from-R2}, there exists an unramified Galois covering $R_1\to R_2$, together with a locally universal family over $R_1$ and a linearization of the $\GL(V)$-action. Using that $\Lambda$ is finite, it is straightforward to check that a $(\theta,c)$-twisted equivariant $G$-bundle is $\zeta$-(poly, semi)-stable if and only if the corresponding $(\otheta,\oc)$-twisted equivariant $G'\times Z$-bundle is $\zeta$-(poly, semi)-stable. Since $\Mm(X,G'\times Z,\Gamma,\theta,c):=R_2\sslash\GL(V)$ is a coarse moduli space parametrizing isomorphism classes of $\cha$-polystable $(\otheta,\oc)$-twisted $\Gamma$-equivariant $G'\times Z$-bundles over $X$, the GIT quotient $\Mm(X,G,\Gamma,\theta,c):=R_1\sslash\GL(V)$ is a coarse moduli space parametrizing isomorphism classes of $\cha$-polystable $(\theta,c)$-twisted $\Gamma$-equivariant $G$-bundles over $X$. Moreover, $\Mm(X,G,\Gamma,\theta,c)$ is a Galois cover of $\Mm(X,G'\times Z,\Gamma,\theta,c)$ with Galois group $H^1_{\Gamma,\theta}(X,\Lambda)$. Since $\Mm(X,G'\times Z,\Gamma,\theta,c)$ is projective, so is $\Mm(X,G,\Gamma,\theta,c)$.
\end{proof}

\subsection{Moduli spaces of principal bundles with non-connected structure group}

According to \cite[Section 4]{BGGM}, in the particular case where $\Gamma$ acts freely on $X$, Theorem \ref{th-moduli-space-G-reductive} may be regarded as a construction of a coarse moduli space of principal bundles with non-connected structure group. Conversely, given a --- possibly non-connected --- reductive complex Lie group $\hat G$, Theorem \ref{th-moduli-space-G-reductive} provides a construction of a coarse moduli space of $\hat G$-bundles, as follows.

Let $G$ be the connected component of $\hat G$, denote by $Z$ the centre of $G$ and let $\Gamma:=\hat G/G$ be the group of connected components of $\hat G$. These fit in a short exact sequence
\begin{equation}\label{eq-general-extension}
    1\to G\to \hat G\to\Gamma\to 1.
\end{equation}
Let $a:\Gamma\to \Out(G)$ be the characteristic homomorphism of (\ref{eq-general-extension}). It is well known that there exists a lift $\Out(G)\to\Aut(G)$ of the natural surjection $\Aut(G)\to\Out(G)$ --- see \cite{BGGM}, for example ---, hence in particular there is a homomorphism $\theta:\Gamma\to\Aut(G)$ fitting in the commutative diagramme
$$
\begin{tikzcd}
\Aut(G)\arrow[r]  & \Out(G)\\
  & \Gamma\arrow[lu,dotted,"\theta"]\arrow[u,"a"]
\end{tikzcd},
$$
whose image consists of Dynkin diagramme automorphisms.
Pick such a lift $\theta$ of $a$.

\begin{definition}\label{def-twisted-product}
Given a 2-cocycle $c\in Z^2_{a}(\Gamma,Z)$, we define the \textbf{$(\theta,c)$-twisted product of $G$ by $\Gamma$}, written $G\times_{(\theta,c)}\Gamma$, to be the group which is equal to $G\times\Gamma$ as a set and has multiplication
$$(g,\gamma)(g',\gamma')=(g\theta_{\gamma}(g')c(\gamma,\gamma'),\gamma\gamma')$$
for every $g$ and $g'$ in $G$ and every $\gamma$ and $\gamma'$ in $\Gamma$.
\end{definition}

It can be seen --- see, for example, \cite[Proposition 2.2]{PNR} ---that There exists a 2-cocycle $c\in Z^2_{a}(\Gamma,Z)$ such that the extensions of $G$ given by $\hat G$ and $G\times_{(\theta,c)}\Gamma$ are equivalent, in the sense that there exists an isomorphism $\hat G\xrightarrow{\sim} G\times_{(\theta,c)}\Gamma$ fitting into
\begin{equation*}\label{eq-equivalence-extensions-G}
    \begin{tikzcd}[cong/.style = {draw=none,"\xrightarrow{\hspace*{0.2cm}\sim\hspace*{0.2cm}}" description,sloped}, eq/.style = {draw=none,"=" description,sloped}]
    1\arrow[r]  & G \arrow[r]\dar[equal] & \hat G \arrow[r]\ar[d,cong] & \Gamma \arrow[r]\dar[equal] & 1\\
    1\arrow[r]  & G \arrow[r] & G\times_{(\theta,c)}\Gamma \arrow[r] & \Gamma\arrow[r] & 1
    \end{tikzcd}.
\end{equation*}

Let $\tilde X\to X$ be a $\Gamma$-bundle, and let $\Gamma_0\le \Gamma$ be a minimal subgroup of $\Gamma$ such that $\tilde X$ admits a reduction of structure group $\tilde X_0$ to $\Gamma_0$.  By \cite[Proposition 2.10]{PNR}, the following categories are equivalent.
\begin{itemize}
    \item The category of $\hat G$-bundles $\bundle$ such that $\bundle/G\cong\tilde X$.
    \item The category of $(\theta,c)$-twisted $\Gamma_0$-equivariant $G$-bundles over $\tilde X_0$, with set of isomorphisms generated by the set of $\Gamma_0$-equivariant isomorphisms of $G$-bundles and the set of pullbacks by elements of the centralizer $Z_{\Gamma}(\Gamma_0)$ of $\Gamma_0$ in $\Gamma$.
\end{itemize}
Using this equivalence of categories and Definition \ref{def-semistable-twisted-equivariant}, we obtain the following.

\begin{definition}\label{definition-stability}
Let $\zk$ be the centre of a $\Gamma$-invariant maximal compact subalgebra $\lie k\subset\lie g$, and fix $\cha\in i\zk^{\Gamma_0}$. Let $\hat G_0:=G\times_{(\theta,c)}\Gamma_0\le\hat G$.
A $\hat G$-bundle $\bundle$ over $X$ such that $\bundle/G\cong\tilde X$, with a reduction of structure group $\bundle_0$ to $\hat G_0$, is:
\begin{itemize}
    \item \textbf{$\zeta$-stable} if $\deg \bundle_0(\sigma,s)> \pair{\zeta}s$ for any $s\in i\lie k^{\Gamma_0}$ and any reduction of structure group $\sigma\in H^0(X,\bundle_0(\hat G_0/P_s))$. Here $P_s\le\hat G_0$ denotes the parabolic subgroup defined by (\ref{eq-def-Ps}).
    \item \textbf{$\zeta$-semistable} if $\deg \bundle_0(\sigma,s)\ge \pair{\zeta}s$ for any $s\in i\lie k^{\Gamma_0}$ and any reduction of structure group $\sigma\in H^0(X,\bundle_0(\hat G_0/P_s))$.
    \item \textbf{$\zeta$-polystable} if it is semistable and, if $\deg \bundle_0(\sigma,s)=\pair{\zeta}s$ for some $s\in i\lie k^{\Gamma_0}$ and a reduction $\sigma\in H^0(X,\bundle_0(\hat G_0/P_s))$, there is a further holomorphic reduction of structure group $\sigma'\in H^0(X,(\bundle_0)_{\sigma}(P_s/L_s))$. Here $(\bundle_0)_{\sigma}\to X$ denotes the $P_s$-bundle corresponding to the reduction $\sigma$, and $L_s\le P_s$ denotes the Levi subgroup defined by (\ref{eq-def-levi}).
\end{itemize}
\end{definition}

Therefore, Theorem \ref{th-moduli-space-G-reductive} implies the following.

\begin{corollary}\label{cor-moduli-space-G-non-connected}
    Let $\hat G$ be an arbitrary --- possibly non-connected --- reductive complex Lie group, and let $\tilde X\to X$ be a $\Gamma$-bundle. Fix $\cha\in i\lie z_{\lie k}^{\Gamma_0}$, where $\zk$ is the centre of a $\Gamma$-invariant maximal compact subalgebra $\lie k\subset\lie g$. Then there is a complex projective variety $\Mm_{\tilde X}(X,\hat G)$ which is a coarse moduli space parametrizing isomorphism classes of $\cha$-polystable $\hat G$-bundles over $X$ whose quotient by $G$ is isomorphic to $\tilde X$. It contains an open subvariety classifying isomorphism classes of $\cha$-stable $\hat G$-bundles over $X$.
\end{corollary}
\begin{proof}
    Let $G$ be the connected component of the identity of $\hat G$, and let $\Gamma:=\hat G/G$ be its group of connected components. Pick $\theta$ and $c$ as above. Pick a connected component $\tilde X_0$, which is a reduction of structure group of $\tilde X$ to $\hat G_0:=G\times_{(\theta,c)}\Gamma_0\le\hat G$. Then the complex projective variety $\Mm_{\tilde X}(X,\hat G):=\Mm(X,G,\Gamma,\theta,c)/Z_{\Gamma}(\Gamma_0)$, where $Z_{\Gamma}(\Gamma_0)$ is the centralizer of $\Gamma_0$ in $\Gamma$ and $\Mm(X,G,\Gamma,\theta,c)$ is defined by Theorem \ref{th-moduli-space-G-reductive}, is the required complex projective variety --- the group $Z_{\Gamma}(\Gamma_0)$ acts on $\Mm(X,G,\Gamma,\theta,c)$ by pullback, see \cite[Proposition 2.9]{PNR}.
\end{proof}


\providecommand{\bysame}{\leavevmode\hbox to3em{\hrulefill}\thinspace}


\begin{thebibliography}{99}



\bibitem{balaji-seshadri}
V. Balaji and C. S. Seshadri, 
`Moduli of parabolic G-torsors on a compact Riemann surface', {\em J. Algebr. Geom.} {\bf 24} (2010), 1.

\bibitem{oscar-barajas-higgs}
G. Barajas, S. Basu and O. Garc\'ia-Prada, 
'Finite group actions on Higgs bundle moduli spaces', in preparation. 

\bibitem{gallego}
G. Barajas and G. Gallego, 
'Twisted equivariant bundles as parabolic bundles twisted by a gerbe', in preparation.

\bibitem{PNR}
G. Barajas and O. García-Prada, 'A Prym-Narasimhan-Ramanan construction of principal bundle fixed points',
preprint (2022), arXiv:2211.12812.

\bibitem{BGGM}
G. Barajas, O. García-Prada, P. B. Gothen and I. Mundet i Riera, 
`Non-connected Lie groups, twisted equivariant bundles and coverings', {\em Geom. Dedicata} {\bf 217} (2023), 27.

\bibitem{oscar-suratno} S. Basu and O. Garc\'ia-Prada, 'Finite group actions on Higgs bundle moduli spaces and twisted equivariant structures',
preprint (2020), arXiv:2011.04017.

\bibitem{nadler1} D. Ben-Zvi, J. Francis and D. Nadler,
`Integral transforms and Drinfeld centers in derived algebraic geometry', {\em J. Am. Math. Soc.} {\bf 23} (2010), 909--966.


\bibitem{biswas}
I. Biswas, 
'Parabolic bundles as orbifold bundles', {\em Duke Math. J.} {\bf 88} (1997), 305--325.

\bibitem{oscar-biswas-hurtubise}
I. Biswas, O. García Prada, and J. Hurtubise, 'Pseudoreal-real Higgs bundles on compact Kähler
manifolds', {\em Annales de l’Institut Fourier}, \textbf{64} (2014), 2527-–2562.


\bibitem{BHH} I. Biswas, J. Huisman and J. Hurtubise, {\it The moduli space of stable vector bundles over a real algebraic curve}, Math. Ann. \textit{347} (2010), 201--233.

\bibitem{ginzburg} N. Chriss and V. Ginzburg,
\textsl{Representation Theory and Complex Geometry}, first edition, Birkh\"auser, Boston, 1997.

\bibitem{damiolini1}
C. Damiolini, 
`On equivariant bundles and their moduli
spaces
', {\em C. R. Math.} {\bf 362} (2024), 55--62.

\bibitem{damiolini2}
C. Damiolini and J. Hong, 
`Local types of $(\Gamma, G)$-bundles and parahoric group schemes', {\em Proc. London Math. Soc.} {\bf 127} (2003), 261--294.



\bibitem{donagi-gaitsgory}
R. Y. Donagi and  D. Gaitsgory, 
`The gerbe of Higgs bundles', {\em Transform. Groups} {\bf 7} (2002), 109--153.

\bibitem{oscar-ignasi-gothen}
O. García-Prada, P. B. Gothen and I. Mundet i Riera, 
'Higgs pairs, twisted equivariant structures and non-connected groups', in preparation.


\bibitem{gomez-sols}
T.L. G\'omez and I. Sols,
'Moduli space of principal sheaves over projective varieties',
\textsl{Math. Ann.} \textbf{161} (2005), 1037--1092.

\bibitem{beyond-git}
D. Halpern-Leistner, 'On the structure of instability in moduli theory',
preprint (2014), arXiv:1411.0627.

\bibitem{reflexive}
R. Hartshorne, 
'Stable reflexive sheaves',
\textsl{Math. Ann.} \textbf{254} (1980), 121--176.

\bibitem{kraft-representations}
H. Kraft, C. Procesi, \textsl{Classical invariant theory, a primer}, Lecture Notes, Version 2000.

\bibitem{mumford} D. Mumford,
\textsl{The Red Book of Varieties and Schemes}, second, expanded edition, includes the Michigan lectures (1974) on curves and their Jacobians, with contributions by Enrico Arbarello, Springer-Verlag, Berlin, 1999.

\bibitem{mumford-GIT} D. Mumford J. Fogarty and F. Kirwan,
\textsl{Geometric Invariant Theory}, second edition, Springer-Verlag, Berlin, 1982.
  
\bibitem{narasimhan-ramanan}
M.~S. Narasimhan and S. Ramanan, `Generalised Prym varieties as fixed points', 
\textsl{J. Indian Math. Soc.} \textbf{39} (1975), 1--19.

\bibitem{reppen}
L. Olsson, S. Reppen and T. Tajakka, 'Moduli of $\mathcal{G}$-bundles under nonconnected group schemes and nondensity of essentially finite bundles',
preprint (2023), arXiv:2311.05326v1.

\bibitem{pappas-rapoport}
G. Pappas and M. Rapoport,
'On tamely ramified $\mathcal G$-bundles on curves',
\textsl{Algebr. Geom.} \textbf{11} (2024), 796–-829.

\bibitem{schmitt} A. H. W. Schmitt,
\textsl{Geometric invariant theory and decorated principal bundles}, Zurich lectures in advanced mathematics, Eur. Math. Soc., Z\"urich, 2008.

\bibitem{ramanathan1}
A. Ramanathan,
'Moduli for principal bundles over algebraic curves I',
\textsl{Proc.
Indian Acad. Sci. Math. Sci.} \textbf{106} (1996), 301--328.


\bibitem{serre-galois} J.-P. Serre, \emph{Galois cohomology}, First Edition, Springer-Verlag, 1997.

\bibitem{teleman} C. Teleman and C. Woodward,
'Parabolic bundles, products of conjugacy classes, and Gromov-Witten invariants',
\textsl{Ann. Inst. Fourier} \textbf{53} (2003), 713--748.

\end{thebibliography}
\end{document}